 \def\RR{{\mathbb R}}  \def\TT{{\mathbb T}}
 \def\ZZ{{\mathbb Z}}
\def\cA{\mathcal{A}}    
   \def\cN{\mathcal{N}}
    \def\cW{\mathcal{W}}
\def\cF{\mathcal{F}}  \def\cL{\mathcal{L}}
\newtheorem*{teo*}{Theorem}
\newtheorem*{prop*}{Proposition}
\newtheorem*{cor*}{Corollary}
\newtheorem*{goal*}{Goal}
\newtheorem*{mainteo}{Main Theorem}
\newtheorem*{mainteoformal}{Main Theorem (Precise Statement)}
\newtheorem*{teoA}{Theorem A}
\newtheorem*{teoB}{Theorem B}
\newtheorem{teo}{Theorem}[section]
\newtheorem*{conj}{Conjecture}
\newtheorem{cor}[teo]{Corollary}
\newtheorem{lemma}[teo]{Lemma}
\newtheorem{prop}[teo]{Proposition}
\newcommand{\bi}{\begin{itemize}}
\newcommand{\ei}{\end{itemize}}
\theoremstyle{definition}
\theoremstyle{remark}
\newcommand{\Heis}{\mathcal{H}}
\newcommand{\bbR}{\mathbb{R}}
\newcommand{\bbQ}{\mathbb{Q}}
\newcommand{\bbZ}{\mathbb{Z}}
\newcommand{\bbN}{\mathbb{N}}
\newcommand{\bbT}{\mathbb{T}}
\newcommand{\Es}{E^s}
\newcommand{\Ec}{E^c}
\newcommand{\Eu}{E^u}
\newcommand{\Ecu}{E^{cu}}
\newcommand{\Ecs}{E^{cs}}
\newcommand{\Ws}{\cW^s}
\newcommand{\Wu}{\cW^u}
\newcommand{\inv}{^{-1}}
\newcommand{\Fc}{\mathcal{F}^c}
\newcommand{\Fcu}{\mathcal{F}^{cu}}
\newcommand{\Fcs}{\mathcal{F}^{cs}}
\newcommand{\Fsig}{\mathcal{F}^{\sigma}}
\newcommand{\As}{\mathcal{A}^s}
\newcommand{\Ac}{\mathcal{A}^c}
\newcommand{\Au}{\mathcal{A}^u}
\newcommand{\Acu}{\mathcal{A}^{cu}}
\newcommand{\Acs}{\mathcal{A}^{cs}}
\newcommand{\Asig}{\mathcal{A}^{\sigma}}
\newcommand{\Lcs}{L^{cs}}
\newcommand{\Lcu}{L^{cu}}
\newcommand{\cLcs}{\cL^{cs}}
\newcommand{\cLcu}{\cL^{cu}}
\newcommand{\tM}{\tilde M_A}
\newcommand{\HD}{d_H}
\newcommand{\ep}{\epsilon}
\newcommand{\lam}{\lambda}
\newcommand{\Lam}{\Lambda}
\newcommand{\gam}{\gamma}
\newcommand{\Gam}{\Gamma}
\newcommand{\heis}{\mathfrak{h}}
\newcommand{\spacearrow}{\quad\Rightarrow\quad}
\newcommand{\dist}{\operatorname{dist}}
\newcommand{\length}{\operatorname{length}}
\newcommand{\volume}{\operatorname{volume}}
\newcommand{\interior}{\operatorname{int}}
\newcommand{\Aff}{\operatorname{Aff}}
\theoremstyle{remark}
\newtheorem*{remark} {\bf Remark}
\newtheorem{assumption}[teo] {\bf Assumption}
\newcommand{\eps}{\varepsilon}
\author[A. Hammerlindl]{Andy Hammerlindl}
\address{School of Mathematical Sciences, Monash University,
Victoria 3800 Australia} \email{andy.hammerlindl@monash.edu}
\author[R. Potrie]{Rafael Potrie}
\address{CMAT, Facultad de Ciencias, Universidad de la Rep\'ublica, Uruguay}
\urladdr{www.cmat.edu.uy/$\sim$rpotrie}\email{rpotrie@cmat.edu.uy}
\title[Partial hyperbolicity in 3-solvmanifolds]{Classification of partially hyperbolic diffeomorphisms in 3-manifolds with solvable fundamental group}
\thanks{A.H.~was partially supported by
the Australian Research Council under Grant DP120104514.
R.P.~was partially supported by CSIC group 618, FCE-3-2011-1-6749 and the
Palis--Balzan project.}
\begin{document}
\begin{abstract}
A classification of partially hyperbolic diffeomorphisms on 3-dimensional
manifolds with (virtually) solvable fundamental group is obtained.
If such a diffeomorphism does not admit a periodic attracting or repelling
two-dimensional torus, it is dynamically coherent and leaf conjugate to a
known algebraic example.
This classification includes manifolds which support Anosov flows, and it
confirms conjectures by Rodriguez Hertz--Rodriguez Hertz--Ures
(\cite{HHU}) and Pujals (\cite{BW}) in the specific case of solvable
fundamental group.
\bigskip

\noindent {\bf Keywords:} Partial hyperbolicity (pointwise),
dynamical coherence, leaf conjugacy.

%\medskip

\noindent {\bf MSC 2000:} 37C05, 37C20, 37C25, 37C29, 37D30,
57R30.
\end{abstract}

\maketitle

\section{Introduction}\label{SectionIntroduction}
\subsection{Partial hyperbolicity in dimension three}

Partial hyperbolicity has been widely studied in recent years not only as a generalization of uniform hyperbolicity, but also to
%try to
characterize robust dynamical behaviour in terms of geometric structures
invariant under the tangent map. Partially hyperbolic diffeomorphisms play a
central role in both the study of stable ergodicity and robust transitivity
(see \cite{BDV,WilkinsonSurvey}).

The goal of this paper is to contribute to the classification problem of
partially hyperbolic diffeomorphisms in dimension three.
A diffeomorphism $f$ of a 3-manifold $M$ is partially hyperbolic if there is a
splitting of the tangent bundle into three $Df$-invariant one-dimensional
subbundles
$TM= E^s \oplus E^c \oplus E^u$
such that $E^s$ is contracted by $Df$, $E^u$ is expanded by $Df$, and this dominates
any expansion or contraction on $E^c$.

The classification can be stated informally as follows:

\begin{mainteo}
Every partially hyperbolic diffeomorphism on a 3-manifold with (virtually)
solvable fundamental group is, up to finite lifts and iterates, one of the
following:
\begin{itemize}
    \item a Derived-from-Anosov system,
    \item a skew-product,
    \item a deformation of a suspension Anosov flow, or
    \item a non-dynamically coherent system containing \\ a periodic torus
        tangent either to $E^c \oplus E^u$ or $E^c \oplus E^s$.
\end{itemize}
\end{mainteo}

The next section gives precise statements both of the definition of partial
hyperbolicity and of the four families in the classification.

It is necessary to consider finite iterates and lifts to a finite cover
as otherwise the result would not hold (see \cite{BW}).
In Appendix \ref{Apendix-FiniteQuotients}, we present a complete
classification which takes into account all possible quotients and roots of
the above models.

All four families in the Main Theorem are non-empty.
For the first three, simple algebraic models exist.
Constructing examples in the fourth family, however, requires much
more work. The recent proof of the existence of such non-dynamically
coherent examples, given in \cite{HHU}, came
as a surprise to the partially hyperbolic community.
Since a periodic torus must be an attractor if transverse to $E^s$ or
a repeller if transverse to $E^u$, the existence of such objects can be ruled
out under mild additional assumptions.  For instance, there are no such tori
if the partially hyperbolic system is transitive (or even chain recurrent)
or has an invariant measure of full support.
Appendix \ref{Appendix-AnosovTori} further studies the dynamics of systems
containing these periodic tori.

The proof of the Main Theorem builds on previous results such as
\cite{BBI,BBI2,BI,HHU3,Hammerlindl,HNil,HP,Pw,Pot} and ideas developed in \cite{BW}.
In fact, \cite{HP} establishes the Main Theorem in the case of virtually
nilpotent fundamental group.

The contribution of this paper is to treat for the first time partially
hyperbolic diffeomorphisms on manifolds which support Anosov flows (namely
3-manifolds with sol geometry) without \emph{a priori} assumptions on the
existence of foliations tangent to the center direction.
For these manifolds, no previous results on dynamical coherence were known,
not even in the more restrictive case of absolute partial hyperbolicity.

We expect many of the ideas presented here to apply in more general
contexts in order to complete the classification of
partial hyperbolicity on all 3-manifolds.
In fact, much of the reasoning here relies only on two crucial properties
which hold in this case:

\begin{itemize}
\item[(i)] The mapping class group of the manifold is finite; every
    diffeomorphism has an iterate which is isotopic to the identity.
\item[(ii)] Foliations of the manifold can be accurately described; if
    there are no torus leaves, no two leaves on the universal cover have
    bounded Hausdorff distance.
\end{itemize}

Using these two properties, we are able
to show that all the leaves of certain \emph{branching
foliations} introduced in \cite{BI} are fixed by the dynamics on the
universal cover.
This is already a big step in comparing such partially hyperbolic
diffeomorphisms to Anosov flows as these flows always fix the leaves of their
invariant weak foliations.
This property of fixed leaves is used to classify Anosov and expansive flows
on a variety of 3-manifolds % pun intended
(see \cite{Brunella} and the references therein),
and such results might be generalized to the partially hyperbolic setting.

\subsection{Precise definitions and statement of results}

As this paper deals only with manifolds in dimension three, $M$ will always
denote a compact connected $3$-manifold without boundary.
A diffeomorphism $f: M \to M$ is
\emph{partially hyperbolic} if there is $N>0$ and a $Df$-invariant continuous
splitting $TM =E^s \oplus E^c \oplus E^u$ into one-dimensional
subbundles such that for every $x\in M$:
\[ \|Df^N|_{E^s(x)}\| < \|Df^N|_{E^c(x)}\| < \|Df^N|_{E^u(x)}\|
\quad\text{and}\quad
 \|Df^N|_{E^s(x)}\| < 1 < \|Df^N|_{E^u(x)}\|. \]
This definition is sometimes called \emph{strong} partial hyperbolicity,
as opposed to \emph{weak} partially hyperbolicity, where there is only a
splitting into two subbundles.
(See \cite[Appendix B]{BDV} for more discussion.)
There also exists an \emph{absolute} version of partial hyperbolicity
considered in Appendix \ref{Appendix-AnosovTori} of this paper.

A \emph{foliation} is a partition of $M$ into $C^1$-injectively immersed
submanifolds tangent to a continuous distribution (see
\cite{CandelConlon}).

For all partially hyperbolic systems, the bundles $E^s$ and $E^u$ integrate
into invariant \emph{stable} and \emph{unstable} foliations $\cW^s$ and
$\cW^u$ (see \cite{HPS}). The integrability of the other bundles does not hold
in general.
The system is \emph{dynamically coherent} if there exist
invariant foliations $\cW^{cs}$ and $\cW^{cu}$ tangent to $E^{cs}=E^s\oplus
E^c$ and $E^{cu}=E^c \oplus E^u$ respectively. This automatically implies the
existence of an invariant foliation $\cW^c$ tangent to $E^c$. See \cite{BuW2}
for more information on dynamical coherence and reasons for considering this
definition.

As mentioned earlier, there are non-dynamically coherent examples which
contain normally hyperbolic periodic two-dimensional tori. % (\cite{HHU}).
The discoverers of these examples conjectured that
such tori are the unique obstruction to dynamical coherence in dimension three:

\begin{conj}[Rodriguez Hertz--Rodriguez Hertz--Ures (2009) \cite{HHU}]
Let $f: M \to M$ be a partially hyperbolic diffeomorphism such
that there is no periodic two-dimensional torus tangent to
$E^{cs}$ or $E^{cu}$. Then, $f$ is dynamically coherent.
\end{conj}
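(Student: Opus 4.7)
The plan is to prove the conjecture in the specific setting of this paper, namely for $M$ with virtually solvable fundamental group, by combining the Burago--Ivanov theory of branching foliations with the two structural properties (i) and (ii) of such manifolds. The goal is to upgrade $E^{cs}$ and $E^{cu}$ to genuine $f$-invariant foliations $\cW^{cs}$ and $\cW^{cu}$.

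First, by passing to an orientation cover and a suitable iterate, I reduce to the case where the three bundles $E^s, E^c, E^u$ are orientable with $Df$-preserved orientations and, by property (i), $f$ is isotopic to the identity. The theorem of Burago--Ivanov \cite{BI} then produces $f$-invariant \emph{branching foliations} $\cF^{cs}$ and $\cF^{cu}$ tangent to $E^{cs}$ and $E^{cu}$, each approximable by genuine foliations. I would rule out compact leaves of these branching foliations: a compact surface tangent to $E^{cs}$ or $E^{cu}$ in a 3-dimensional partially hyperbolic setting must be a torus, and some iterate of it would be periodic, contradicting the hypothesis.

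The central step is to show that a suitable lift $\widetilde f$ of $f$ to the universal cover $\widetilde M$ fixes every leaf of the lifted branching foliations $\widetilde{\cF}^{cs}$ and $\widetilde{\cF}^{cu}$. Because $f$ is isotopic to the identity, a lift $\widetilde f$ can be chosen at uniformly bounded distance from the identity, so for any leaf $L$ the image $\widetilde f(L)$ lies within bounded Hausdorff distance of $L$; property (ii), applicable precisely because torus leaves have been excluded, forces $\widetilde f(L)=L$. From this leafwise invariance, I would argue that the branching foliations cannot actually branch: a branching point between two distinct leaves $L_1, L_2$ would yield, by propagation along the leaves and iteration by $\widetilde f$, a pair of distinct leaves at bounded Hausdorff distance, contradicting property (ii). This produces honest $f$-invariant foliations $\cW^{cs}$ and $\cW^{cu}$ and hence dynamical coherence.

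The main obstacle is property (ii) itself in the sol geometry case. The nilpotent case was handled in \cite{HP} using the explicit geometry of nilmanifolds, but sol manifolds carry an Anosov suspension flow and a hyperbolic monodromy, making the coarse geometry of two-dimensional leaves considerably more delicate. Establishing that no two distinct leaves of $\cF^{cs}$ or $\cF^{cu}$ can remain at bounded Hausdorff distance requires a careful analysis of how such leaves sit with respect to the weak stable and unstable foliations of the underlying Anosov flow, together with a control of how the hyperbolic monodromy separates leaves in the universal cover. Once this geometric input is secured, the dynamical reasoning above closes the argument and yields the conjecture in the solvable setting.
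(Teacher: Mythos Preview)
Your overall architecture---pass to an orientation cover and iterate until $f$ is isotopic to the identity, invoke the Burago--Ivanov branching foliations, rule out compact (torus) leaves, and then show the lift fixes every leaf---matches the paper's route to Theorem~A. The paper does establish $\tilde f(L)=L$ essentially as you describe (Corollary~\ref{fixleaf}), though note that property~(ii) as stated concerns true foliations; extending it to the branching foliation $\cF^{cs}$ already requires a volume-growth argument (Lemmas~\ref{Lamint}--\ref{uniqueL}) and not merely the comparison with a nearby true foliation.

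The genuine gap is your final step. Knowing that distinct leaves of $\cF^{cs}$ lie at infinite Hausdorff distance does \emph{not} preclude two of them from passing through a common point: two properly embedded planes tangent to $E^{cs}$ may share a half-plane and diverge on the other half, and ``propagation along the leaves and iteration by $\tilde f$'' produces nothing new since $\tilde f$ already fixes each leaf setwise. The paper eliminates branching by a different, genuinely dynamical mechanism. It first shows that $\cF^{cu}$ is almost parallel to the \emph{opposite} model foliation $\cA^{cu}$ (Proposition~\ref{Prop-AlmostParallelToCorrect}), a step it calls ``surprisingly involved'' and which you do not address. This yields that $\tilde f$ also fixes every $\cF^{cu}$-leaf and hence permutes the finitely many components of each intersection $L^{cs}\cap L^{cu}$; combined with the absence of periodic points on $\tM$ (Lemma~\ref{noper}) this forces every forward orbit to escape to $+\infty$ in the suspension coordinate $p_1$ (Lemma~\ref{toinfty}). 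One then defines $H(x)$ as the unique model leaf in $\cA^{cs}$ staying boundedly close to the forward orbit of $x$; the fibers of $H$ are exactly the leaves of $\cF^{cs}$, so each point lies on a single leaf and $\cF^{cs}$ is a true foliation (Proposition~\ref{uniqfoln}). Your proposal is missing this orbit-escape argument, and without it property~(ii) alone cannot rule out branching.
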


We prove this conjecture in the case of virtually solvable fundamental group.

\begin{teoA} Let $f$ be a partially hyperbolic diffeomorphism of a
    3-manifold $M$ such that $\pi_1(M)$ is virtually solvable.
    If there is no periodic torus tangent to
    $E^{cs}$ or $E^{cu}$,
    then there are unique invariant foliations tangent
    to $E^{cs}$ and $E^{cu}$. In particular, $f$ is dynamically
    coherent.
\end{teoA}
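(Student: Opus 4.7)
The plan is to combine the branching foliation technique of Burago--Ivanov \cite{BI} with the two structural properties highlighted in the introduction, after first reducing to the sol-manifold case. The nilpotent case of the Main Theorem (and hence of Theorem A) is already settled in \cite{HP}, so via geometrization it suffices to treat the case where $M$ is finitely covered by a sol-manifold $M_A$ associated to a hyperbolic matrix $A \in SL(2,\ZZ)$. By property (i), after passing to a finite cover and a finite iterate, one may assume $f$ is isotopic to the identity, and in particular any lift $\tilde f$ to the universal cover $\tilde M_A \cong \RR^3$ is at bounded distance from the identity.

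Next I would invoke the Burago--Ivanov construction to obtain $f$-invariant branching foliations $\cW^{cs}_{bran}$ and $\cW^{cu}_{bran}$ tangent to $E^{cs}$ and $E^{cu}$, together with sequences of approximating genuine foliations. The entire point becomes showing that these branching foliations are actually foliations: once established, uniqueness for $\cW^{cs}$ and $\cW^{cu}$ follows by applying the same non-branching argument to any competing candidate, and dynamical coherence is immediate.

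The decisive intermediate step is to show that, on $\tilde M_A$, every leaf of $\tilde{\cW}^{cs}_{bran}$ is fixed by $\tilde f$, and symmetrically for $\tilde{\cW}^{cu}_{bran}$. For this I would use the suspension Anosov flow on $M_A$ as a geometric reference: leaves of the branching center-stable foliation, being $2$-dimensional surfaces in $\RR^3$ at bounded distance from their $\tilde f$-images, must interact in a controlled way with the weak-stable foliation of the Anosov flow. The hypothesis that there is no periodic $2$-torus tangent to $E^{cs}$ or $E^{cu}$, combined with the Anosov torus analysis of Appendix \ref{Appendix-AnosovTori}, rules out compact leaves from the outset and places us in the regime where property (ii) applies.

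The main obstacle is ruling out branching itself, and this is where property (ii) does the essential work. Suppose two distinct leaves of $\tilde{\cW}^{cs}_{bran}$ merge at some point; on the opposite side of the merge they give two distinct complete leaves which, by the local product structure and the bound on how branching can spread, must remain within bounded Hausdorff distance in $\tilde M_A$. Property (ii) forbids exactly this, producing a contradiction. The symmetric argument for $\tilde{\cW}^{cu}_{bran}$ is obtained by replacing $f$ with $f\inv$. The hardest technical point is establishing property (ii) in the sol-manifold setting — i.e., controlling the Hausdorff distance between distinct leaves of a foliation of $\tilde M_A$ without any torus leaf — since unlike the nilpotent case there is no global group-invariant model foliation to compare with, and one has to exploit instead the transverse Anosov behavior of the sol geometry.
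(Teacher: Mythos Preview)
Your overall architecture---reduce to the sol-manifold $M_A$ via \cite{HP}, pass to an iterate homotopic to the identity, invoke the Burago--Ivanov branching foliations, show leaves are fixed, then show the branching foliations are genuine---matches the paper. But your mechanism for the last step has a real gap. You claim that if two distinct branching leaves pass through a common point then ``on the opposite side of the merge they \ldots\ must remain within bounded Hausdorff distance,'' whence property~(ii) yields a contradiction. Neither half of this works: two branching leaves through a point can coincide on a half-plane and diverge arbitrarily on the other, so bounded Hausdorff distance between them is not automatic; and property~(ii) is a statement about \emph{genuine} foliations of $M_A$ (it is Plante's theorem, extended in \cite{HP}), so it cannot be applied directly to the branching object.

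The paper's route is different, and your final sentence misidentifies the difficulty. There \emph{are} group-invariant model foliations on $\tM$, namely the weak foliations $\Acs,\Acu$ of the suspension Anosov flow, and property~(ii) is used only to show that $\Fcs$ is almost parallel to one of them. The hard step (Lemma~\ref{uniqueL}) is that two distinct leaves of $\Fcs$ cannot shadow the \emph{same} model leaf; this is proved by a volume-growth argument: if they did, their union would have nonempty interior and be $f$-invariant, and iterating an unstable arc inside it produces exponential volume inside a region of only linear volume growth. Only after this does one conclude that the lift fixes every leaf (Corollary~\ref{fixleaf}). Non-branching and uniqueness then require a further dynamical ingredient (Proposition~\ref{uniqfoln}): one shows forward orbits escape to $+\infty$ in the suspension coordinate, and uses this to define a map $H:\tM\to\Acs$, independent of $\Fcs$, whose fibers are exactly the branching leaves. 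Disjointness of fibers gives non-branching; independence of $H$ from $\Fcs$ gives uniqueness.
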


Once dynamical coherence is established, there is a natural way to classify
partially hyperbolic diffeomorphisms from a topological point of view. This
uses the notion of leaf conjugacy first introduced in \cite{HPS} (see also
\cite{BW}).
Two partially hyperbolic diffeomorphisms $f,g : M
\to M$ are
\emph{leaf conjugate} if they are both dynamically coherent and there is a
homeomorphism $h: M \to M$ which maps every center leaf $L$ of $f$ to a center
leaf $h(L)$ of $g$ and such that $h(f(L))=g(h(L))$.

In 2001, E.~Pujals stated a conjecture (later formalized in \cite{BW}) that in
dimension three all transitive partially hyperbolic diffeomorphisms belong to
one of three already known classes.

\begin{conj}[Pujals (2001), Bonatti--Wilkinson (2004)]
    Let $f$ be a transitive partially hyperbolic diffeomorphism
    on a 3-manifold.
    Then, modulo finite lifts and iterates, $f$ is
    leaf conjugate to one of the following models:
\begin{itemize}
    \item A linear Anosov diffeomorphism of $\TT^3$.
    \item A skew product over a linear Anosov automorphism of $\TT^2$.
    \item The time-one map of an Anosov flow.
\end{itemize}
\end{conj}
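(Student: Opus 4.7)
The plan is to combine the Main Theorem with a case analysis driven by Perelman's Geometrization Theorem, and to isolate the hyperbolic case as the main obstruction.

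First, transitivity rules out the fourth family in the Main Theorem: a periodic torus tangent to $E^{cs}$ is normally expanded in $E^u$ and is therefore a topological attractor, and one tangent to $E^{cu}$ is a topological repeller; either precludes transitivity. Thus if $\pi_1(M)$ is virtually solvable, the Main Theorem directly yields leaf conjugacy to one of the three algebraic models, and these are precisely the first three items in Pujals' list: a linear Anosov diffeomorphism of $\TT^3$ (Euclidean geometry), a skew product over a linear Anosov automorphism of $\TT^2$ (Nil geometry), or the time-one map of a suspension Anosov flow on a $3$-solvmanifold (Sol geometry).

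When $\pi_1(M)$ is not virtually solvable, Geometrization reduces to two main cases: $M$ is Seifert fibered over a hyperbolic orbifold (geometry $\HH^2\times\RR$ or $\widetilde{\mathrm{PSL}(2,\RR)}$), or $M$ has at least one hyperbolic JSJ piece (including the case $M$ itself hyperbolic). In the Seifert case, the plan is to adapt properties (i) and (ii) of the excerpt to show that the Burago--Ivanov branching foliations tangent to $E^{cs}$ and $E^{cu}$ are compatible with the Seifert fibration; the center direction should then be tangent to the fibers, forcing $f$ to preserve the fibration after passing to a finite iterate, to descend to an Anosov map on the orbifold base, and hence to be leaf conjugate to the time-one map of the Anosov flow on the unit tangent bundle of a hyperbolic surface, realizing the third family of Pujals. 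In a manifold with a non-trivial torus decomposition, one would further need to show the branching foliations are either transverse to the JSJ tori or contain them as leaves, reducing the problem to each piece separately.

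The main obstacle is the case where $M$ contains a hyperbolic piece, and in particular $M$ closed hyperbolic. Here the conjecture predicts that \emph{no} transitive partially hyperbolic diffeomorphism exists, so the strategy must be to derive a contradiction from the hypotheses. Following the paper's approach, one would pass to the universal cover, use property (i) to assume an iterate of $f$ is isotopic to the identity, and use property (ii) to show that the leaves of the Burago--Ivanov branching foliations tangent to $E^{cs}$ and $E^{cu}$ are fixed by the lifted dynamics. The hope is that the resulting pair of $f$-invariant taut (branching) foliations on a hyperbolic $3$-manifold, combined with the exponential divergence of $E^s$ and $E^u$, is incompatible with Thurston's theory of taut foliations or with the large-scale geometry of $\HH^3$. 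Producing such an incompatibility is well beyond the techniques developed in this paper and is the principal open step.
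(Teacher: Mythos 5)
This statement is a \emph{conjecture}; the paper does not prove it and does not claim to. It appears in the introduction as background, and the Main Theorem of the paper establishes it \emph{only} in the case where $\pi_1(M)$ is virtually solvable (Rodriguez Hertz--Rodriguez Hertz--Ures also stated a variant with ``dynamically coherent'' replacing ``transitive,'' and the same caveat applies). Your first paragraph is correct and matches exactly how the authors position their result: transitivity rules out periodic $cs$- or $cu$-tori because such a torus is a topological attractor or repeller, and then the Main Theorem gives the leaf conjugacy to one of the three algebraic models. That part is a faithful restatement of what the paper proves, not new content.

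The rest of your proposal is not a proof but a research program, and you say so yourself. The Seifert-fibered discussion (``the plan is to adapt \ldots the center direction should then be tangent to the fibers'') asserts structural compatibility with the fibration that is not established anywhere and does not follow from properties (i) and (ii) of the introduction, which are specific to solvmanifolds: on Seifert manifolds over hyperbolic orbifolds the mapping class group is not finite, and the classification of foliations up to being almost parallel on the universal cover (Proposition \ref{propmodel}) has no analogue of the form you need. The hyperbolic case you explicitly flag as ``the principal open step,'' which is to say you have not proved it. Moreover, attempting to ``derive a contradiction'' there is the wrong target: the conjecture in the form stated is now known to be \emph{false} in general, as subsequent work (Bonatti--Gogolev--Potrie, and Bonatti--Gogolev--Hammerlindl--Potrie) produced transitive partially hyperbolic diffeomorphisms on Seifert and hyperbolic $3$-manifolds that are not leaf conjugate to any of the three models. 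So not only is the gap a genuine gap, but the approach of assuming the conjecture is true and hunting for a proof of nonexistence on hyperbolic manifolds cannot succeed. The honest conclusion is: your argument recovers the paper's Theorem for virtually solvable $\pi_1(M)$ and nothing beyond it.
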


Recently, Rodriguez Hertz, Rodriguez Hertz, and Ures stated a
slightly different version of this conjecture where ``transitive'' is
replaced by ``dynamically coherent.''

Our Main Theorem is then a proof of the conjecture in the case of
virtually solvable fundamental group.

\begin{mainteoformal}
Let $f$ be a partially hyperbolic
diffeomorphism on a 3-manifold with virtually solvable fundamental group.
If there is no periodic torus tangent to $E^{cs}$ or $E^{cu}$, then,
modulo finite lifts and iterates, $f$ is leaf conjugate to one of the
following models:
\begin{itemize}
    \item A linear Anosov diffeomorphism of $\TT^3$.
    \item A skew product over a linear Anosov automorphism of $\TT^2$.
    \item The time-one map of a suspension Anosov flow.
\end{itemize}
\end{mainteoformal}

In the first case, the Anosov diffeomorphism $A:\bbT^3\to\bbT^3$ will have
three distinct real eigenvalues and therefore be partially hyperbolic.
In the second case, a \emph{skew product} is a bundle map
$f:M\to M$ where $M$ is a circle bundle over $\bbT^2$ defined by a continuous
bundle projection $P:M\to\bbT^2$ and an Anosov base map $A:\bbT^2\to\bbT^2$
such that $Pf=AP$.
This family includes the standard skew products on
$\bbT^3 = \bbT^2 \times \bbT^1$, as well as partially hyperbolic maps on other
3-dimensional nilmanifolds.  See \cite{HP} for further discussion on these two
cases.

This paper mainly treats the third case. In this paper, a
\emph{suspension manifold} is taken to mean a manifold of the form
$M_A = \bbT^2 \times \bbR / {\sim}$ where $(Ax, t) \sim (x, t+1)$
and $A : \bbT^2 \to \bbT^2$ is a hyperbolic toral automorphism. A
\emph{suspension Anosov flow} is a flow $\varphi_t(x,s)=(x,s+t)$
on a suspension manifold $M_A$.
% I think suspension manifold should be used over solvmanifold when we only
% mean $M_A$.  Tori and nilmanifolds are also first-class solvmanifolds in
% their own right!   --Andy
%
%Sometimes the term
%\emph{solvmanifold} will be used to refer to the manifolds $M_A$
%for some hyperbolic matrix $A$.

Previous classification results used either the fact that the action in
homology was partially hyperbolic (\cite{BI,Hammerlindl,HNil,HP}) or that
many center leaves were fixed by $f$ (\cite{BW}). In fact, based on many
previous results, it was proved in \cite{HP} that the previous conjectures
hold under the assumption that the fundamental group of $M$ is virtually
nilpotent (which is equivalent to having polynomial growth of volume).

In this paper, we treat a family of manifolds
which admit Anosov flows and show that every partially hyperbolic system on
these manifolds is comparable to a flow.

\begin{teoB} Let $f: M \to M$ be a partially hyperbolic dynamically coherent diffeomorphism on a manifold $M$ such that $\pi_1(M)$ is virtually solvable but not virtually nilpotent. Then, there exists a finite iterate of $f$ which is leaf conjugate to the time-one map of a suspension Anosov flow.
\end{teoB}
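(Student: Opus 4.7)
The plan is to reduce to a canonical suspension model via the topology of sol manifolds, then show that on the universal cover the lift of (an iterate of) $f$ fixes every leaf of the center-stable and center-unstable foliations, and finally build the leaf conjugacy. First I would invoke the classification of 3-manifolds with virtually solvable, non-virtually-nilpotent fundamental group: by the structure theory of polycyclic groups together with geometrization, up to a finite cover $M$ is a suspension manifold $M_A = \TT^2 \times \RR / {\sim}$ for a hyperbolic toral automorphism $A$. Lift $f$ and use property (i) to replace $f$ by a finite iterate isotopic to the identity on $M_A$. Choose a lift $\tilde f : \tM \to \tM$ at bounded distance from the identity that commutes with deck transformations. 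Dynamical coherence gives invariant foliations $\Wcs, \Wcu$, which lift to foliations $\widetilde{\Wcs}, \widetilde{\Wcu}$ on $\tM \cong \RR^3$.

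The core step is to show that $\tilde f$ fixes every leaf of $\widetilde{\Wcs}$ and of $\widetilde{\Wcu}$. If some leaf $\tilde L$ of $\widetilde{\Wcs}$ satisfied $\tilde f(\tilde L) \ne \tilde L$, then $\tilde L$ and $\tilde f(\tilde L)$ would be distinct leaves of the same foliation at bounded Hausdorff distance, which by property (ii) would force the existence of a torus leaf of $\Wcs$ downstairs. Such a torus can be ruled out by combining the restricted topology of embedded incompressible surfaces in the sol manifold $M_A$ with a direct analysis of how the stable direction of $A$ behaves inside the fibers: the non-closed horizontal leaves of $A$ cannot be promoted to compact leaves of the ambient $cs$-foliation. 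The analogous argument handles $\widetilde{\Wcu}$. Once all $cs$- and $cu$-leaves are fixed, every center leaf sits inside an $\tilde f$-invariant $cs$-leaf and an $\tilde f$-invariant $cu$-leaf, and the induced dynamics on the space of center leaves within each $cs$-leaf is controlled.

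Finally I would construct the leaf conjugacy by identifying $\widetilde{\Wcs}, \widetilde{\Wcu}$ with the lifted weak stable and weak unstable foliations of the suspension Anosov flow $\tilde\varphi_t$ on $\tM$. Both pairs of foliations have leaf spaces homeomorphic to $\RR$, on which the deck group $\pi_1(M_A)$ acts essentially by $A$ on the transverse direction, so a canonical equivariant identification of leaf spaces matches them. Intersecting matched $cs$ and $cu$ leaves yields a correspondence between $\widetilde{\Wc}$ and the orbits of $\tilde\varphi_t$ which descends to an $f$-to-$\varphi_1$ leaf conjugacy on $M_A$, after a further iterate if needed to match the time direction. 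The main obstacle is the fixed-leaf step: both controlling the geometry of the lifted leaves well enough to apply property (ii), and systematically excluding torus leaves, demand careful use of global product structure on the universal cover together with the specific geometry of sol; once this is established, the comparison with the algebraic flow follows along the lines of \cite{HPS,BW,HP}.
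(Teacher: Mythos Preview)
Your overall architecture matches the paper's: reduce to $M_A$, pass to an iterate homotopic to the identity, show the lift fixes every $cs$- and $cu$-leaf, then build the conjugacy. But the heart of your argument has a genuine gap.

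\medskip
\textbf{Ruling out torus leaves.} Your proposed mechanism---``the non-closed horizontal leaves of $A$ cannot be promoted to compact leaves of the ambient $cs$-foliation''---is circular. It presupposes that $\Wcs$ is aligned with the stable direction of $A$, which is precisely the conclusion you are working toward. A priori the bundle $E^{cs}$ of $f$ bears no relation to the eigenspaces of $A$, and sol manifolds certainly contain incompressible tori (the fibers). Worse, the paper explicitly notes (Section~\ref{Sect-LeafConj}) that a dynamically coherent $f_0$ on $M_A$ \emph{can} have an invariant torus tangent to $E^{cs}$; such a torus lies in a branching foliation but not in the invariant one. The actual input the paper uses is a dynamical result announced in \cite{HHU}: the invariant $cs$/$cu$ foliations of a dynamically coherent partially hyperbolic diffeomorphism have no compact leaves. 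This is not a topological statement about $M_A$, and your sketch does not supply it.

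\medskip
\textbf{Property (ii) is not free.} Even once torus leaves are excluded, the assertion that distinct lifted $cs$-leaves have infinite Hausdorff distance is not a black-box consequence of Plante's classification. Almost-parallel to $\Acs$ only gives that each $\Fcs$-leaf shadows \emph{some} $\Acs$-leaf; it does not preclude two $\Fcs$-leaves shadowing the same one. The paper closes this with a volume-growth argument (Lemma~\ref{uniqueL}): if a packet of $\Fcs$-leaves all shadowed one $\Acs$-leaf, that packet would have nonempty interior disjoint from its $G_0$-translates, hence linearly growing volume in the $p_1$-direction, contradicting the exponential growth of unstable arcs trapped inside it (Lemma~\ref{ucs1}). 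You need either this or an equivalent.

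\medskip
\textbf{The conjugacy step.} Your endgame (``leaf spaces homeomorphic to $\RR$, deck group acts by $A$'') hides two real issues. First, you implicitly need that $\Fcs$ and $\Fcu$ shadow \emph{different} model foliations; the paper spends the second half of Section~\ref{Section-ModelCorrect} proving $\Fcu$ is almost parallel to $\Acu$ rather than $\Acs$, via a further growth argument (Proposition~\ref{Prop-AlmostParallelToCorrect}). Second, matching leaf spaces equivariantly does not by itself produce a homeomorphism of $\tM$; the paper instead builds a Fuller-type transverse section, shows the return map is expansive, and invokes Lewowicz to conjugate it to a linear Anosov map, from which the leaf conjugacy is assembled explicitly.
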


To end this introduction, we present an algebraic version of the Main Theorem
which follows directly from the results presented in Appendix
\ref{Apendix-FiniteQuotients}.
For a Lie group $G$, an \emph{affine map} is a diffeomorphism of the form
$G \to G,\  g \mapsto \Phi(g) \cdot g_0$
where $\Phi:G \to G$ is a Lie group automorphism and $g_0 \in G$.
Let $\Aff(G)$ denote the set of affine maps on $G$.

\begin{teo*}
    If $f$ is a partially hyperbolic, dynamically coherent diffeomorphism of a
    3-manifold $M$
    with (virtually) solvable fundamental group, then there are
    $G$, a solvable Lie group, $\Gamma < \Aff(G)$, and $\alpha \in \Aff(G)$
    such that $G / \Gamma$ is a manifold diffeomorphic to $M$,
    $\alpha$ descends to a partially hyperbolic map
    $\alpha_0 : G / \Gamma \to G / \Gamma$
    and $f$ and $\alpha_0$ are leaf conjugate.
\end{teo*}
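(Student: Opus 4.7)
The plan is to deduce this algebraic statement as a corollary of the Main Theorem (Precise Statement), which gives leaf conjugacy to an algebraic model up to finite iterates and lifts, combined with the finite-quotient and root analysis of Appendix~\ref{Apendix-FiniteQuotients}, which allows the removal of that qualifier.

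Step one: invoke the Main Theorem. Since $f$ is assumed dynamically coherent and $\pi_{1}(M)$ is virtually solvable, no finite lift of any iterate of $f$ can be a non-dynamically coherent example (dynamical coherence passes to lifts and iterates by pulling back the center-stable and center-unstable foliations), so the fourth family of the Main Theorem is excluded. Hence there exist a finite cover $\pi:\tilde M\to M$ and an integer $n\geq 1$ such that $\tilde f^{n}$ is leaf conjugate to a model $\beta$ on one of $\RR^{3}/\ZZ^{3}$, a Heisenberg nilmanifold, or a suspension manifold $M_{A}$. In each case the total space has the form $G_{0}/\Gamma_{0}$ for a simply connected three-dimensional solvable Lie group $G_{0}$ (respectively $\RR^{3}$, the Heisenberg group, or the Sol group) with $\Gamma_{0}<\Aff(G_{0})$ a cocompact lattice, and $\beta$ is the descent of an element of $\Aff(G_{0})$.

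Step two: remove the cover and the iterate using Appendix~\ref{Apendix-FiniteQuotients}. The finite group of deck transformations of $\pi$ acts on $\tilde M=G_{0}/\Gamma_{0}$ by homeomorphisms which, in each of the three model families, the appendix shows can be realized by elements of $\Aff(G_{0})$. This yields a discrete subgroup $\Gamma<\Aff(G_{0})$ containing $\Gamma_{0}$ with finite index and satisfying $G_{0}/\Gamma\cong M$. Likewise, the appendix constructs an affine $n$-th root $\alpha\in\Aff(G_{0})$ of $\beta$ chosen so as to descend through $\Gamma$ to a partially hyperbolic affine map $\alpha_{0}:G_{0}/\Gamma\to G_{0}/\Gamma$; partial hyperbolicity of $\alpha_{0}$ is automatic from the eigenvalue data of the derivative of $\alpha$ on the Lie algebra of $G_{0}$.

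The main obstacle is transporting the leaf conjugacy equivariantly from $\tilde M$ down to $M$. The leaf conjugacy $\tilde h$ between $\tilde f^{n}$ and $\beta$ supplied by the Main Theorem is not a priori compatible with either the deck group of $\pi$ or the $\ZZ/n$-action generated by $\tilde f$ modulo $\tilde f^{n}$, and without that compatibility it cannot descend to $M$. One must modify $\tilde h$ along center leaves (a freedom available because leaf conjugacies are only determined up to center-leaf-preserving homeomorphisms) so that it simultaneously intertwines all these symmetries, and then pass to the quotient to obtain the desired leaf conjugacy between $f$ and $\alpha_{0}$. The case-by-case analysis of Appendix~\ref{Apendix-FiniteQuotients} carries out precisely this adjustment, exploiting the fact that every symmetry in sight is itself represented by an element of $\Aff(G_{0})$ and the resulting rigidity of affine structures on the three model solvmanifolds.
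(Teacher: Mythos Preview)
Your outline is broadly in the right spirit, but it misdescribes what Appendix~\ref{Apendix-FiniteQuotients} actually does, and the route the paper takes is organized differently.

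\textbf{The main difference.} You propose to first apply the Main Theorem on a finite cover to obtain a model $\beta$ for $\tilde f^{n}$, and then to manufacture an affine $n$-th root of $\beta$ and push the leaf conjugacy down. The paper never takes roots of affine maps. Instead it reads the affine model $\Phi$ off directly from the automorphism that $f$ induces on $\pi_{1}(M)$: Lemma~\ref{algmap} does this for suspension manifolds, and the Bieberbach and Lee--Raymond rigidity theorems do it for the flat and nil cases respectively. The point is that once the leaf conjugacy $h$ of Lemma~\ref{niceh} has been built on the universal cover (for a suitable iterate made homotopic to the identity), the fact that $h$ is a bounded distance from the identity forces $h(g(L))=\Phi(h(L))$ for every leaf $L$, since distinct leaves of $\Acs$ have infinite Hausdorff distance (Corollary~\ref{hdinfty}). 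This is the content of Theorem~\ref{solvconj}: the same $h$ that conjugates the iterate also conjugates $g$ itself to its own affine model, with no root-taking.

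\textbf{A second difference.} In the genuine solv case the paper does not descend from a cover at all: Proposition~\ref{proplist2} shows that $M$ is already a suspension manifold, so Theorem~\ref{solvconj} applies directly on $M$. Only in the virtually abelian and virtually nilpotent (non-abelian) cases is there a nontrivial covering; there the cover is two-fold, and the descent is achieved by the explicit averaging of two $c$-equivalent leaf conjugacies in the proof of Proposition~\ref{virtabel}, not by the general equivariance mechanism you describe.

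\textbf{A minor correction.} Your exclusion of the fourth family is misargued: dynamical coherence does \emph{not} rule out a periodic torus tangent to $E^{cs}$ or $E^{cu}$ (see the caveat at the start of Section~\ref{Sect-LeafConj}). This is harmless for the conclusion, since Proposition~\ref{proplist5} and the proof of Theorem~\ref{solvconj} require only dynamical coherence, not the absence of such tori; but the justification you give is not the right one.
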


\subsection{Main steps of the proof  and organization of the paper}

The Main Theorem has already been proven in the case of a virtually
nilpotent fundamental group \cite{HP}.  This paper therefore proves the
theorem only in the case that the group is virtually solvable and not
virtually nilpotent.  This breaks into five major steps.

\begin{prop} \label{proplist1}
    If $f$ is a partially hyperbolic diffeomorphism of a 3-manifold $M$ with a
    fundamental group which is virtually solvable and not virtually nilpotent,
    then there is a finite normal covering of $M$ by a suspension manifold
    $M_A$.
\end{prop}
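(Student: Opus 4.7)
The plan is to derive this statement from $3$-manifold topology and elementary group theory; partial hyperbolicity plays essentially no role beyond ensuring that $M$ is closed and connected. The core idea is that the algebraic hypothesis on $\pi_1(M)$ is rigid enough to force $M$ into the Sol geometric family, after which the conclusion reduces to the classical fact that every closed Sol $3$-manifold is finitely covered by a torus bundle over $\SS^1$ with hyperbolic monodromy, i.e.~by a suspension manifold $M_A$.

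First I would pass to a finite normal cover whose fundamental group is genuinely solvable. By hypothesis, $\pi_1(M)$ contains a finite index solvable subgroup $H$; intersecting $H$ over its finitely many $\pi_1(M)$-conjugates yields a normal, finite index, solvable subgroup $H_0 \triangleleft \pi_1(M)$, which is still not virtually nilpotent (this property is inherited by and from finite index subgroups). Let $\tilde M \to M$ be the associated finite normal cover; then $\pi_1(\tilde M) \cong H_0$ is solvable and not virtually nilpotent.

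Next I would invoke the classification of closed $3$-manifolds with solvable fundamental group (originally due to Evans--Moser, and completed by Perelman's geometrization theorem). Such a manifold, up to a further finite cover, carries one of the Thurston geometries $\SS^3$, $\EE^3$, Nil, or Sol; in the first three cases $\pi_1$ is finite, virtually $\ZZ^3$, or virtually $2$-step nilpotent, so all are ruled out by hypothesis. Hence $\tilde M$ must be finitely covered by a closed Sol $3$-manifold. Every Sol lattice contains a finite index subgroup isomorphic to $\ZZ^2 \rtimes_A \ZZ$ for some hyperbolic $A \in \mathrm{SL}(2,\ZZ)$ (passing to an index-two subgroup if necessary to make $A$ orientation preserving), and the corresponding quotient is precisely the suspension manifold $M_A$.

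Composing the resulting tower of finite covers, and once more intersecting with $\pi_1(M)$-conjugates, yields the required finite normal covering of $M$ by a suspension manifold $M_A$. The main obstacle is careful bookkeeping: guaranteeing normality over the \emph{original} $M$ at every stage, and matching the hyperbolic lattice element with a genuine Anosov toral automorphism rather than some proper power of one. One also has to rule out nontrivial spherical summands in the prime decomposition of $M$, but this is immediate from the observation that a virtually solvable group admits no nontrivial free product decomposition, so $M$ is prime.
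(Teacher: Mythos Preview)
Your argument is correct, but it follows a genuinely different route from the paper's. The paper's proof is a two-line application of a result of Parwani \cite{Pw}: partial hyperbolicity on $M$ produces, via the Burago--Ivanov construction, a Reebless foliation, and Parwani uses foliation theory to conclude that any such $M$ with virtually solvable $\pi_1$ is finitely covered by a torus bundle $M_A$ over the circle. One then observes that $\pi_1(M_A)\cong\ZZ^2\rtimes_A\ZZ$ is nilpotent whenever $A$ is not hyperbolic, contradicting the hypothesis. Normality of the cover is handled exactly as you do, by passing to the normal core.

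Your approach instead treats the statement as pure $3$-manifold topology: rule out nontrivial free-product decompositions, invoke Evans--Moser together with geometrization to force the Sol geometry, and then use the standard description of Sol lattices. This is valid, and it has the conceptual payoff of showing that the conclusion depends only on $\pi_1(M)$, not on the existence of a partially hyperbolic map. The cost is that you are invoking Perelman's geometrization, whereas the paper's route through Parwani stays within foliation theory and the Burago--Ivanov machinery already needed elsewhere in the paper. So your opening remark that ``partial hyperbolicity plays essentially no role'' is true for \emph{your} proof but not for the paper's: the paper uses the dynamics precisely to avoid geometrization. One small point: when you pass to the normal core at the end, you should note (as you implicitly do) that every finite-index subgroup of $\ZZ^2\rtimes_A\ZZ$ with $A$ hyperbolic is again of this form, since the short exact sequence over $\ZZ$ automatically splits; this is what guarantees the normal cover is still a suspension manifold.
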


\begin{prop} \label{proplist2}
    If $M$ is finitely covered by a suspension manifold and $f:M \to M$
    is partially hyperbolic, then $M$ is diffeomorphic to a suspension
    manifold.
\end{prop}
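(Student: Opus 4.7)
The plan is to combine the topological classification of closed 3-manifolds finitely covered by $M_A$ with an obstruction coming from partial hyperbolicity itself. Since Sol geometry is preserved under finite covers and quotients, $M$ is a closed Sol 3-manifold. By the standard classification of such manifolds, $M$ is either a torus bundle over $S^1$ with hyperbolic monodromy (hence a suspension manifold $M_{A'}$ for some hyperbolic $A'$) or a \emph{torus semi-bundle}, obtained by gluing two twisted $I$-bundles over the Klein bottle along a common torus boundary. It therefore suffices to rule out the semi-bundle case.

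Suppose that $M$ is a torus semi-bundle. I would first pass to a convenient intermediate normal finite cover $\tilde M$ between $M_A$ and $M$ that is itself a torus bundle over $S^1$, so that the deck transformation group of $\tilde M \to M$ is generated by an involution $\sigma$ reversing the base $S^1$-direction of the fibration. On the universal cover $\mathbb{R}^3$, writing Sol coordinates $(x,y,t)$ with $x,y$ aligned to the stable and unstable eigendirections of the monodromy, $\sigma$ acts --- up to translations --- essentially as $(x,y,t)\mapsto (y,x,-t)$, thereby interchanging those two eigendirections.

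Next, after replacing $f$ by a suitable iterate so that it commutes with the finite deck action, $f$ lifts to a partially hyperbolic diffeomorphism $\tilde f$ of $\tilde M$ that commutes with $\sigma$. Using the large-scale geometry of $\tilde M$ together with the techniques of Brin--Burago--Ivanov \cite{BBI,BI} and Hammerlindl \cite{Hammerlindl} for controlling stable and unstable leaves in the universal cover, one can show that the asymptotic directions of the lifted leaves of $\Ws$ and $\Wu$ align with the two transverse eigendirections of the monodromy. Since $\sigma$ interchanges those eigendirections at infinity, it would therefore have to interchange the one-dimensional bundles $E^s$ and $E^u$ of $\tilde f$ --- contradicting the fact that the dominated splitting $E^s \oplus E^c \oplus E^u$ is canonically ordered by the relative contraction and expansion rates of $\tilde f$ and cannot be nontrivially permuted by a diffeomorphism commuting with $\tilde f$.

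The hard step is making the asymptotic-direction statement rigorous without invoking dynamical coherence on $M$. This amounts to showing that, on the universal cover of $\tilde M$, leaves of $\Ws$ and $\Wu$ are proper planes whose behaviour at infinity is dictated by the Sol structure, so that $\sigma$'s action on ideal directions genuinely swaps the stable and unstable bundles. I expect this rigidity of leaves at infinity --- combined with the canonical ordering of the dominated splitting --- to be the crux of the proof.
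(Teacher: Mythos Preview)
Your strategy---reduce to the Sol classification (torus bundle versus torus semi-bundle) and then kill the semi-bundle case by exploiting that the covering involution $\sigma$ reverses the $t$-direction and hence swaps the two model eigendirections---is a different route from the paper's, and the geometric picture you draw of $\sigma$ is correct. However, there are real gaps.

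First, the ``hard step'' you flag is not something you can import from \cite{BBI,BI,Hammerlindl}: those results are for $\bbT^3$ (abelian fundamental group), where the large-scale geometry is Euclidean and quasi-isometry of the strong foliations is available. On a suspension manifold the analogous alignment statement---that the invariant (branching) foliations shadow the model foliations $\Acs$, $\Acu$---is precisely the main technical content of this paper (Sections~\ref{Section-ModelCorrect}--\ref{Sect-Coherence}, culminating in Proposition~\ref{Prop-AlmostParallelToCorrect} and Proposition~\ref{uniqfoln}). So your argument is not independent of the paper's machinery; it consumes it. Moreover, you phrase the alignment for the one-dimensional leaves of $\Ws$ and $\Wu$, but what the paper actually controls are the two-dimensional branching foliations $\Fcs$, $\Fcu$; the one-dimensional statement would need quasi-isometry of the strong foliations, which is never established here. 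The clean version of your contradiction is: $\sigma$ commutes with $\tilde f$, hence preserves $E^{cs}$; uniqueness (Proposition~\ref{uniqfoln}) forces $\sigma(\Fcs)=\Fcs$; but $\Fcs$ is almost parallel to $\Acs$ while $\sigma(\Fcs)$ is almost parallel to $\sigma(\Acs)=\Acu$---impossible. Second, you omit the case where the lift $g$ on $M_A$ has a periodic $cs$- or $cu$-torus; the alignment results require ruling this out, and in that case one must invoke \cite{HHU3} separately, as the paper does.

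For comparison, the paper's proof also spends the machinery of Propositions~\ref{proplist4} and \ref{proplist5}, but the endgame is different and arguably more direct: it builds the center foliation $\cF$ on $M_A$, shows every deck transformation $\tau$ preserves $\cF$, uses a Lefschetz count on the leaf space $\bbT^2/A$ to find a $\tau$-fixed center leaf, and then observes that an orientation-reversing $\tau$ would have to fix a point on that leaf---impossible for a nontrivial deck transformation. Once all $\tau$ preserve the center orientation, an averaged transverse function lets one exhibit $M$ explicitly as the suspension of an expansive torus map, hence (by Lewowicz) of a hyperbolic automorphism. This constructs the suspension structure rather than arguing by contradiction from the Sol classification.
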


\begin{prop} \label{proplist3}
    For any homeomorphism $f$ of a suspension manifold $M_A$, there is $n  \ge  1$
    such that $f^n$ is homotopic to the identity.
\end{prop}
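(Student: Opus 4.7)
The plan is to reduce the homotopy question to a purely algebraic one about $\pi_1(M_A)$ and then show that the outer automorphism group of this group is finite.

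First, I would observe that $M_A$ is aspherical: its universal cover is $\mathbb{R}^2 \times \mathbb{R} = \mathbb{R}^3$, since the suspension construction yields a free properly discontinuous action of $\pi_1(M_A)$ on $\mathbb{R}^3$. For a closed aspherical manifold $M$, the standard obstruction-theoretic / $K(\pi,1)$ result states that homotopy classes of self-maps are in bijection with conjugacy classes of homomorphisms $\pi_1(M) \to \pi_1(M)$. In particular, a self-map of $M_A$ is homotopic to the identity if and only if the induced map on $\pi_1$ is an inner automorphism. Thus it suffices to prove that $\mathrm{Out}(\pi_1(M_A))$ is finite, for then some power of $f_*$ becomes inner in the outer automorphism group, i.e.\ some $f^n$ is homotopic to $\id_{M_A}$.

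Next I would analyze $\Gamma := \pi_1(M_A) = \ZZ^2 \rtimes_A \ZZ$, with $\ZZ = \langle t \rangle$ acting on $\ZZ^2$ via $A$. The subgroup $\ZZ^2$ is characteristic in $\Gamma$ (it is the Fitting subgroup, or equivalently the unique maximal abelian normal subgroup, since $A$ is hyperbolic so $\Gamma$ is not virtually nilpotent). Consequently any $\phi \in \mathrm{Aut}(\Gamma)$ restricts to some $B \in GL(2,\ZZ)$ on $\ZZ^2$ and sends $t$ to $v \cdot t^{\epsilon}$ for some $v \in \ZZ^2$, $\epsilon \in \{\pm 1\}$. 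The relation $t x t^{-1} = Ax$ forces the compatibility $BAB^{-1} = A^{\epsilon}$, so $B$ lies in the normalizer $N := N_{GL(2,\ZZ)}(\langle A \rangle)$.

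I would then quotient by inner automorphisms. A direct calculation shows that conjugation by $u t^k \in \Gamma$ acts on $\ZZ^2$ by $A^k$ and sends $t$ to $(u - Au)\,t$. Hence modulo inner automorphisms, the data $(B, \epsilon, v)$ varies in $N/\langle A \rangle$, the sign $\{\pm 1\}$, and $\ZZ^2/(I - A)\ZZ^2$. The last group is finite because $A$ is hyperbolic, so $\det(I - A) \neq 0$. For the first, the centralizer $C_{GL(2,\ZZ)}(A)$ consists of the units of the order $\ZZ[A]$ in the real quadratic field $\QQ(\lambda)$ ($\lambda$ an eigenvalue of $A$), which by Dirichlet's unit theorem is $\{\pm 1\} \times \ZZ$; hence $\langle A \rangle$ has finite index in $C_{GL(2,\ZZ)}(A)$, and $C_{GL(2,\ZZ)}(A)$ has index at most $2$ in $N$, so $N/\langle A \rangle$ is finite. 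Combining these finite factors, $\mathrm{Out}(\Gamma)$ is finite, which completes the proof.

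The main obstacle is the finiteness of $N/\langle A\rangle$, which is where the arithmetic of $A$ genuinely enters (via Dirichlet's theorem for the order $\ZZ[A]$); the rest of the argument is a bookkeeping of inner versus outer automorphisms, combined with the standard fact that homotopy classes of self-maps of an aspherical manifold are classified by $\mathrm{Hom}(\pi_1,\pi_1)/\text{conj}$.
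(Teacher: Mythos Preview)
Your argument is correct and rests on the same two ingredients as the paper's proof: the $K(\pi,1)$ property of $M_A$ (so homotopy is governed by $\pi_1$), and the structure of automorphisms of $\Gamma=\ZZ^2\rtimes_A\ZZ$, in particular that the centralizer of a hyperbolic $A$ in $GL(2,\ZZ)$ is virtually $\langle A\rangle$.  The paper cites this last fact as Lemma~\ref{baake}; you derive it from Dirichlet's unit theorem, which is exactly what underlies that lemma.  One small imprecision: the centralizer need not equal the unit group of $\ZZ[A]$ itself but rather of the (possibly larger) order $\QQ[A]\cap M_2(\ZZ)$; Dirichlet still gives rank one, so your conclusion is unaffected.

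The packaging differs.  You argue abstractly that $\mathrm{Out}(\Gamma)$ is finite, hence some $f_*^n$ is inner.  The paper instead realizes each automorphism of $\Gamma$ by an explicit affine map $\Phi:\tM\to\tM$ (Lemma~\ref{algmap}), shows some iterate $\Phi^n$ has the form $(x,t)\mapsto(A^m x+z,t)$ (Lemma~\ref{nicealgmap}), and observes this descends to a map of $M_A$ homotopic to the identity.  Your route is shorter if one only wants Proposition~\ref{proplist3}; the paper's more constructive approach pays dividends later, since the affine models $\Phi$ are reused to build the leaf conjugacy in Theorem~\ref{solvconj}.
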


\begin{prop} \label{proplist4}
    If $f$ is a partially hyperbolic diffeomorphism on a suspension manifold
    $M_A$ and there is no periodic torus tangent to
    $\Ecu$ or $\Ecs$, then there are unique invariant foliations tangent to
    $\Ecu$ and $\Ecs$.
\end{prop}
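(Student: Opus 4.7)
The plan is to produce candidate invariant surfaces using the branching foliation machinery of Burago and Ivanov \cite{BI} and then upgrade them to genuine foliations by combining properties (i) and (ii) of the introduction with the compactness of $M_A$.

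First I would invoke \cite{BI} to produce $f$-invariant branching foliations $\Fcs$ and $\Fcu$ tangent respectively to $\Ecs$ and $\Ecu$; such branching foliations always exist for strong partially hyperbolic diffeomorphisms in dimension three. A preliminary observation is that neither branching foliation contains a compact leaf: the only closed surface tangent to a continuous line field is a torus, and since compact leaves of $\Fcs$ are isolated by normal hyperbolicity and $f$ permutes them, an iterate of $f$ would fix any such torus, producing a periodic two-dimensional torus tangent to $\Ecs$ or $\Ecu$ and contradicting the hypothesis.

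Next I would pass to the universal cover $\tM \cong \RR^3$. By Proposition \ref{proplist3} some iterate $f^n$ is homotopic to the identity, and since $M_A$ is compact the tracks of such a homotopy have uniformly bounded length, so the lift $\tilde{f^n}:\RR^3\to\RR^3$ commuting with the deck transformations sits at bounded $C^0$ distance from $\id$. On the other hand, property (ii) together with the absence of torus leaves says that any two distinct leaves of $\tilde\Fcs$ or $\tilde\Fcu$ in $\RR^3$ have unbounded Hausdorff distance. These two facts force $\tilde{f^n}(L) = L$ for every leaf $L$ of $\tilde\Fcs$ and $\tilde\Fcu$, the structural analogue of the fact that the weak invariant foliations of an Anosov flow are fixed by its time-one map.

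With every leaf fixed by $\tilde{f^n}$, I would rule out branching. If two leaves $L_1\neq L_2$ of $\tilde\Fcs$ met at a point $x$, the strong unstable leaf through $x$ would cross both transversely; the joint invariance of $L_1$, $L_2$, and $\Wu$ under $\tilde{f^n}$, combined with forward expansion along $\Eu$, would force $L_1$ and $L_2$ to agree along a full unstable segment through $x$, after which a connectedness argument yields $L_1 = L_2$. Hence $\Fcs$ is an honest foliation tangent to $\Ecs$, and symmetrically for $\Fcu$; uniqueness follows because any other foliation tangent to $\Ecs$ lifts to a sub-collection of $\tilde\Fcs$ which, by the same fixed-leaf argument, must coincide with it.

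The main obstacle is establishing the Hausdorff separation of property (ii) for $\Fcs$ and $\Fcu$. Property (i) delivers the good lift $\tilde{f^n}$ and its bounded displacement comes essentially for free, but showing that distinct leaves of $\tilde\Fcs$ in $\RR^3$ are Hausdorff-separated requires a new geometric analysis comparing an a priori arbitrary branching leaf with the reference planes provided by the weak foliations of the Anosov flow on $M_A$. It is here that the nilpotent-case arguments of \cite{HP} break down and where the new ideas specific to the sol geometry of suspension manifolds must enter.
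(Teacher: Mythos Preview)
Your outline has the right shape and you correctly flag the Hausdorff separation of lifted leaves as the heart of the matter; in the paper this is Lemma~\ref{uniqueL}, proved by a volume-growth argument specific to sol geometry (Lemmas~\ref{Lamint}--\ref{ucs1}). But your branching-elimination step is a genuine gap, not just a sketch. You claim that if two fixed $\Fcs$-leaves $L_1\ne L_2$ meet at $x$, then invariance plus expansion along $\Eu$ forces them to ``agree along a full unstable segment through $x$''. This is incoherent: $L_1$ and $L_2$ are tangent to $\Ecs$ and each meets $\Wu(x)$ in the single point $x$ (Proposition~\ref{csuint}), so there is no unstable segment along which they could agree, and iterating preserves exactly this picture. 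Two branching leaves through a common point can diverge to infinite Hausdorff distance, so even ``fixed leaves $+$ Hausdorff separation'' does not by itself rule out branching.

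The paper's argument is genuinely different here. After establishing that $\Fcs$ is almost parallel to the model foliation $\Acs$ and that the correspondence $L\leftrightarrow\cL$ is a bijection (Lemmas~\ref{uniqueCL},~\ref{uniqueL}), it proves that for every $x$ the forward orbit satisfies $p_1(f^n(x))\to+\infty$ (Lemma~\ref{toinfty}); this in turn rests on showing that $\Lcs\cap\Lcu$ has finitely many properly embedded components (Lemma~\ref{Ncomp}), that $f$ has no periodic points on the cover (Lemma~\ref{noper}), and a one-sided displacement bound (Lemma~\ref{rightbound}). One then defines $H(x)$ to be the unique $\Acs$-leaf that stays boundedly close to the whole forward orbit of $x$; the fibers of $H$ are precisely the $\Fcs$-leaves, and since fibers of a map are disjoint, $\Fcs$ is a true foliation (Proposition~\ref{uniqfoln}). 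Uniqueness comes from the fact that $H$ is defined purely dynamically, independent of $\Fcs$---your claim that any other invariant foliation ``lifts to a sub-collection of $\tilde\Fcs$'' is not justified and is not how the paper argues.

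Two smaller points. First, the Burago--Ivanov construction requires the bundles to be oriented and $Df$ to preserve the orientations, so one must pass to a finite cover and an iterate before invoking \cite{BI}; the paper does this explicitly and then descends via Proposition~\ref{liftuniq}. Second, property~(ii) in the introduction is stated for true foliations; transferring it to the branching foliation is not automatic and in the paper goes through the almost-parallel comparison with $\Acs$ (Propositions~\ref{propbi72},~\ref{propmodel}) together with the injectivity of Lemma~\ref{uniqueL}---which is exactly the step you acknowledge as missing.
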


\begin{prop} \label{proplist5}
    If $f$ is a partially hyperbolic diffeomorphism on a suspension manifold
    $M_A$ such that
    $f$ is dynamically coherent and homotopic to the identity,
    then $f$ is leaf conjugate to the time-one map of a suspension Anosov
    flow.
\end{prop}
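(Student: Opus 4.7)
The plan is to lift everything to the universal cover $\tilde M_A \cong \RR^3$ and construct the conjugacy there. Denote the deck group by $\Gamma = \pi_1(M_A) \cong \ZZ^2 \rtimes_A \ZZ$. Since $f$ is homotopic to the identity, I lift $f$ to $\tilde f$ commuting with every $\gamma \in \Gamma$ and at bounded distance from the identity. The three foliations $\Wcs,\Wcu,\Wc$ of $f$ lift to $\Gamma$- and $\tilde f$-invariant foliations $\widetilde{\Wcs},\widetilde{\Wcu},\widetilde{\Wc}$ on $\tilde M_A$. The first key input, already established earlier in the paper via the Burago--Ivanov branching-foliation machinery together with properties (i)--(ii) from the introduction, is that every leaf of $\widetilde{\Wcs}$, $\widetilde{\Wcu}$ and hence $\widetilde{\Wc}$ is individually fixed by $\tilde f$. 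This is precisely what makes $f$ behave like the time-one map of a flow, whose weak invariant foliations trivially have every leaf fixed.

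Next I compare these with the weak stable, weak unstable, and orbit foliations $\cL^{ws},\cL^{wu},\cL^{\varphi}$ of the suspension Anosov flow $\varphi_t$, lifted to $\widetilde{\cL^{ws}},\widetilde{\cL^{wu}},\widetilde{\cL^{\varphi}}$ on $\tilde M_A$. In the natural coordinates $\RR^2 \times \RR$ on $\tilde M_A$, these lifts are affine: $\widetilde{\cL^{ws}}$ is the foliation by planes spanned by the stable eigendirection of $A$ and the vertical $\RR$-factor, and symmetrically for $\widetilde{\cL^{wu}}$, each with leaf space $\RR$. Since both $\widetilde{\Wcs}$ and $\widetilde{\cL^{ws}}$ are $\tilde f$- and $\Gamma$-invariant two-dimensional foliations on $\tilde M_A$, property (ii) (no two distinct leaves at bounded Hausdorff distance) forces each leaf of $\widetilde{\Wcs}$ to lie at bounded Hausdorff distance from exactly one leaf of $\widetilde{\cL^{ws}}$. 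The resulting assignment $\pi^{s}$ between leaf spaces is a $\Gamma$- and $\tilde f$-equivariant bijection, continuous in the leaf-space topology. An analogous $\pi^{u}$ works for $\widetilde{\Wcu}$ and $\widetilde{\cL^{wu}}$.

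With this identification in hand I construct the leaf conjugacy. The global product structure for $\widetilde{\cL^{ws}},\widetilde{\cL^{wu}}$ is immediate from the affine description; the corresponding product structure for $\widetilde{\Wcs},\widetilde{\Wcu}$ follows from the fixed-leaf property via a standard Brin-type argument, so each such pair of leaves meets in a single center leaf of $\tilde f$. The pair of bijections $\pi^s,\pi^u$ then induces a bijection between center leaves of $\tilde f$ and orbits of the lifted flow. To promote this to a continuous leaf conjugacy $\tilde h$, I parameterize along each center leaf so that $\tilde h \circ \tilde f = \varphi_1 \circ \tilde h$; since $\tilde f$ restricts to a translation-like map on each fixed center leaf while $\varphi_1$ is unit translation along each flow orbit, such a parameterization exists and is unique up to a bounded shift, which is pinned down by transverse continuity. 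The map $\tilde h$ is $\Gamma$-equivariant by construction and descends to the desired $h:M_A\to M_A$.

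The main difficulty is the leaf-space identification in the second step: using only the fixed-leaf property and the bounded-Hausdorff-distance dichotomy, one must rigorously match the leaf spaces of $\widetilde{\Wcs}$ and $\widetilde{\Wcu}$ (a priori possibly non-Hausdorff branched objects) with those of the Anosov flow's weak foliations in a continuous and equivariant way. Once this identification and the global product structure are established, the construction of $\tilde h$ and its descent to $M_A$ proceed along the lines of previous leaf-conjugacy arguments such as those in \cite{BW,HP}.
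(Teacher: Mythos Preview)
Your outline has the right large-scale shape, but it contains a genuine circularity and diverges from the paper in the key technical step.

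\textbf{The gap.} You assert that ``each such pair of leaves meets in a single center leaf of $\tilde f$'' and that this ``follows from the fixed-leaf property via a standard Brin-type argument.'' In the paper this is \emph{not} an input to the leaf conjugacy but a \emph{consequence} of it: Lemma~\ref{Ncomp} only gives that $\Lcs\cap\Lcu$ has at most $N$ connected components, and only inside the proof of Proposition~\ref{fconj}, after $h$ has been built, does one conclude ``applying $h^{-1}$ shows that a leaf of $\Fcs$ intersects a leaf of $\Fcu$ in a single leaf of $\Fc$.'' Likewise Global Product Structure (Proposition~\ref{solvgps}) is proved using the already-constructed $h$. Your bijection of center leaf spaces via $(\pi^s,\pi^u)$ is ill-defined until you know the intersection is connected; and your claim that every center leaf is fixed by $\tilde f$ is also premature for the same reason (a~priori $\tilde f$ could permute the finitely many components). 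No ``Brin-type'' argument I am aware of yields connectedness of $\Lcs\cap\Lcu$ from the fixed-leaf property alone in this setting.

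\textbf{How the paper avoids this.} Instead of matching leaf spaces directly, the paper builds a global cross-section: using Proposition~\ref{rightmove} and a Fuller-type averaging (Lemma~\ref{projc}) it produces a continuous $p_c:\tM\to\RR$ which is a $C^1$ diffeomorphism on each center leaf. The level set $\tilde S=p_c^{-1}(0)$ is a torus transverse to $\Fc$, and the first-return map $\psi:\tilde S\to\tilde S$ (twisted by $\gamma_3$) is shown to be expansive (Lemma~\ref{expanse}). Lewowicz's theorem then conjugates $\psi$ to the linear Anosov map $A$, and $h$ is defined by combining this conjugacy on $\tilde S$ with $p_c$ along center leaves. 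The single-component property and GPS are then read off from $h$. This surface-of-section approach is what replaces the product-structure step you are trying to invoke.

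\textbf{A smaller point.} Even granting the leaf-space bijections $\pi^s,\pi^u$ (which the paper does establish, Lemmas~\ref{uniqueCL} and~\ref{uniqueL}), your passage from ``bijection of center leaves'' to ``continuous homeomorphism $\tilde h$'' is not justified; you also do not need $\tilde h\circ\tilde f=\varphi_1\circ\tilde h$ pointwise for a leaf conjugacy, so the parameterization discussion is beside the point. The paper's explicit construction via $p_c$ and the Lewowicz conjugacy $H$ handles continuity directly.
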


Together these propositions also imply Theorems A and B.  (In the case of
Theorem A, one must also refer to \cite{HP} for the virtually nilpotent case.)
Thus, the remaining goal of this paper is to prove each of the five
propositions.

Section \ref{SectionReductionSolvCase} proves Proposition \ref{proplist1}
based on results given in
\cite{Pw}.
The section also reduces Proposition 1.4 to an equivalent result (Lemma
\ref{lemmadc}) which is easier to prove.

Fundamental to the proofs of Propositions \ref{proplist4} and \ref{proplist5} are 
the branching foliations $\Fcs$ and $\Fcu$ developed by Burago and Ivanov
\cite{BI}.
Section \ref{SectionBranchingFoliations} introduces these objects and their
properties.

Section \ref{Section-SolvAndModel} details the geometry of the suspension
manifold $M_A$, its universal
cover $\tM$, and defines two model foliations $\Acs$ and $\Acu$ on this cover.

Section \ref{Section-ModelCorrect} compares the branching foliations to the
model foliations showing that each leaf of $\Fcs$ is a finite distance from a
leaf of $\Acs$ and vice versa, and that the same holds for $\Fcu$ and
$\Acu$.  %This is a key step in the overall proof.

Section \ref{Sect-Coherence} shows dynamical coherence, completing the proof
of Proposition \ref{proplist4}.

Section \ref{Sect-LeafConj} establishes the leaf conjugacy, Proposition
\ref{proplist5}.

Appendix \ref{Apendix-FiniteQuotients} proves Propositions \ref{proplist2} and \ref{proplist3} and thus finishes the proof of the
Main Theorem.  This appendix also gives an algebraic classification of
partially hyperbolic diffeomorphisms on suspension manifolds which are
not homotopic to the identity and classifies the possible quotients of
partially hyperbolic diffeomorphisms on the 3-torus and other 3-nilmanifolds.

Appendix \ref{Appendix-AnosovTori} considers the case where the branching
foliation $\Fcs$ or $\Fcu$ has a torus leaf.  This implies the existence of a
periodic torus, and the appendix rules out such tori for absolutely partially
hyperbolic systems.

\smallskip
{\bf Acknowledgements: }\textit{We have benefited from discussions with
C.~Bonatti, J.~Brum, S.~Crovisier, E.~Pujals, J.A.G.~Roberts, M.~Sambarino,
and A.~Wilkinson.}

%%%%%%%%%%%%%%%%%%%%%%%%%%%%%%%%%%%%%%%%%%%%%%%%%%%%%%%%%%%%%%%%%%%%%%%%%%%%%%%%%%%%%%%%%%%%%%%%%%%%%

\section{Reduction to the case of a suspension manifold}\label{SectionReductionSolvCase}

We start by proving Proposition \ref{proplist1}.

\begin{proof}
    [Proof of Proposition \ref{proplist1}]
    Parwani showed that if $f$ is partially hyperbolic on a 3-manifold $M$
    with virtually solvable fundamental group, then $M$ is finitely covered by
    a torus bundle over a circle \cite[Theorem 1.10]{Pw}.
    That is, $M$ is finitely covered by a suspension manifold
    $M_A$ with the caveat that the toral automorphism $A$ is not necessarily
    hyperbolic.
    %If $A$ is an automorphism of $\bbZ^2$,
    Then, the fundamental group of $M_A$ verifies $\pi_1(M_A) \cong \bbZ^2 \rtimes_A
    \bbZ$, and if $A$ is not hyperbolic, a direct computation shows that the
    group must be nilpotent
    implying that the fundamental group of $M$ is virtually nilpotent.
    As this is not possible by hypothesis,
    $A$ must be hyperbolic. %One deduces that $A$ is hyperbolic as desired.
%    However, if $A$ is not hyperbolic, then,
%    viewing $A$ as an automorphism of $\bbZ^2$,
%    the fundamental group of $M_A$ verifies $\pi_1(M_A) \cong \bbZ^2 \rtimes_A \bbZ$ and since $A$ is not hyperbolic, it is virtually nilpotent.
\end{proof}

\begin{remark}
    The exact statement in \cite[Corollary 1.11]{Pw} requires that the fundamental group
    has a \emph{normal} finite-index solvable subgroup
    $K \triangleleft \pi_1(M)$, instead of just a
    finite-index solvable subgroup $H < \pi_1(M)$.
    These two conditions are, in fact, equivalent.
    Any finite-index subgroup $H < G$ contains a normal finite-index
    subgroup $K \triangleleft G$.  Indeed, there is a natural homomorphism
    $g \mapsto (a H \mapsto g a H)$ from $G$ to the (finite) group of
    permutations of the cosets of $H$, and we may take $K$ to be the kernel.
    If $H$ is solvable, the subgroup $K < H$ is also solvable.
\end{remark}

%Now we present some results which will allow us to consider finite lifts and iterates and still get the results on dynamical coherence stated in Theorem A for the original diffeomorphism.

Section \ref{Sect-Coherence} gives the proof of the following lemma which adds
two additional assumptions to Proposition \ref{proplist4}.

\begin{lemma} \label{lemmadc}
    If $f$ is a partially hyperbolic diffeomorphism on a suspension manifold
    $M_A$ such that
    \begin{itemize}
        \item
        there is no $f$-periodic torus tangent to $\Ecu$ or $\Ecs$,
        \item
        $\Eu$, $\Ec$, $\Es$ are oriented and $Df$ preserves these orientations,
        and
        \item
        $f$ is homotopic to the identity,  \end{itemize}
    then there are unique $f$-invariant foliations tangent to
    $\Ecu$ and $\Ecs$.
\end{lemma}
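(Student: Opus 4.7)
The plan is to combine the Burago--Ivanov branching foliation machinery with the geometry of the suspension manifold $M_A$ on the universal cover. The orientability hypothesis on $\Es,\Ec,\Eu$ together with their $Df$-invariant orientations is precisely what is needed to invoke \cite{BI} and produce $f$-invariant branching foliations $\Fcs$ and $\Fcu$ tangent to $\Ecs$ and $\Ecu$, respectively. Since $f$ is homotopic to the identity, we may lift $f$ to a diffeomorphism $\tilde f : \tM \to \tM$ of the universal cover of $M_A$ which commutes with every deck transformation, and correspondingly lift $\Fcs,\Fcu$ to branching foliations $\widetilde{\Fcs},\widetilde{\Fcu}$ on $\tM$. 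The hypothesis that there are no $f$-periodic tori tangent to $\Ecs$ or $\Ecu$ translates, via standard arguments, into the absence of torus leaves in $\Fcs$ and $\Fcu$, so that property (ii) from the introduction is available to us.

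Next, I would introduce on $\tM$ the two model foliations $\Acs$ and $\Acu$ built from the weak stable and weak unstable foliations of the suspension Anosov flow on $M_A$. These are genuine (non-branching) foliations of $\tM$ by planes, and their leaf spaces and large-scale geometry are completely understood from the sol structure of $M_A$. The central comparison result, proved in Section \ref{Section-ModelCorrect}, states that every leaf of $\widetilde{\Fcs}$ lies at finite Hausdorff distance from some leaf of $\Acs$ and vice versa, with the analogous statement for $\widetilde{\Fcu}$ and $\Acu$. I take this result as given.

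With that comparison in hand, uniqueness and the absence of branching follow by a contradiction argument. Suppose $L_1 \neq L_2$ are two leaves of $\widetilde{\Fcs}$ passing through a common point $\tilde x \in \tM$. By the comparison, each $L_i$ lies at finite Hausdorff distance from a model leaf $A_i$ of $\Acs$; since $\tilde x \in L_1 \cap L_2$, both $A_1$ and $A_2$ pass within bounded distance of $\tilde x$. The structure of $\Acs$ on $\tM$ forces $A_1 = A_2$, and hence $L_1$ and $L_2$ are at bounded Hausdorff distance from each other. This contradicts property (ii), which applies because $\Fcs$ has no torus leaves. Therefore $\widetilde{\Fcs}$ does not branch and is a genuine foliation of $\tM$; it descends to a true $f$-invariant foliation of $M_A$ tangent to $\Ecs$. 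The same argument handles $\Ecu$. Uniqueness of the tangent foliation follows similarly: any other invariant foliation tangent to $\Ecs$ would, by the same comparison theorem, have each of its lifted leaves shadowed by a leaf of $\Acs$, forcing it to coincide with $\Fcs$.

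The real work, and the main obstacle, is not in the contradiction step outlined above but in establishing the shadowing of $\widetilde{\Fcs}$ leaves by $\Acs$ leaves; this requires a careful analysis of how a $2$-dimensional surface tangent to a cone around $\Ecs$ must behave in the sol geometry of $\tM$, exploiting that $\tilde f$ commutes with deck transformations and that periodic tori have been ruled out. Once that geometric comparison is in place, Section \ref{Sect-Coherence} only needs to assemble these pieces into the non-branching and uniqueness statements that constitute Lemma \ref{lemmadc}.
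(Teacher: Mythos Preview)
Your setup is right, and you correctly identify that the heavy lifting is the comparison of $\widetilde{\Fcs}$ with $\Acs$ carried out in Section~\ref{Section-ModelCorrect}. But the contradiction step you propose has a genuine gap. You write that if $L_1 \ne L_2$ share a point $\tilde x$, then the model leaves $A_1,A_2$ they shadow both pass within bounded distance of $\tilde x$, and ``the structure of $\Acs$ on $\tM$ forces $A_1=A_2$.'' This is false: through any bounded neighbourhood of $\tilde x$ there pass infinitely many leaves of $\Acs$, so closeness to a single point says nothing. What you actually need is that the correspondence $\Fcs \to \Acs$ is a \emph{bijection}, i.e.\ that no two distinct leaves of the branching foliation shadow the same model leaf. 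This is precisely Lemma~\ref{uniqueL}, and it does not follow from the almost-parallel statement you take as given; the paper proves it by a volume-growth argument (Lemmas~\ref{Lamint} and~\ref{ucs1}). Your appeal to ``property~(ii)'' does not help, since that property is stated for true foliations, and for the branching foliation it is exactly the content of Lemma~\ref{uniqueL}.

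Even granting the bijection, your argument still does not close: two distinct branching leaves through $\tilde x$ could shadow two distinct model leaves, and nothing you have said rules this out. The paper's route in Section~\ref{Sect-Coherence} is genuinely different and essentially dynamical. From the bijection one gets that $f$ fixes every leaf of $\Fcs$ (Corollary~\ref{fixleaf}); then one shows $f$ has no periodic points on $\tM$ and that $p_1(f^n(x)) \to +\infty$ (Lemmas~\ref{noper} and~\ref{toinfty}). The punchline is Proposition~\ref{uniqfoln}: define $H(x)$ to be the unique leaf of $\Acs$ with $\sup_{n\ge 0}\dist(f^n(x),H(x))<\infty$. The fibres of $H$ are exactly the leaves of $\Fcs$, which simultaneously shows they are disjoint (true foliation) and that any other invariant branching foliation tangent to $\Ecs$ must coincide with $\Fcs$, since $H$ is defined without reference to $\Fcs$. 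Your uniqueness sketch has the same defect as your non-branching sketch: shadowing by $\Acs$ alone does not pin down the foliation; the dynamics of $f$ is what does.
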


To see how Proposition \ref{proplist4} follows from this lemma, we need two
results involving finite covers.

\begin{prop} \label{canlift}
    If $f:M \to M$ is a homeomorphism of a manifold and $p:N \to M$ is a finite
    cover, then some iterate of $f$ lifts to $N$.
    To be precise, there is $n  \ge  1$ and $g: N \to N$ such that
    $p \circ g = f^n \circ p$.
\end{prop}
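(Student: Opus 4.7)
The plan is to reduce the statement to a pigeonhole argument on the finite set of conjugacy classes of subgroups of $\pi_1(M)$ of a given finite index.

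First I would fix basepoints $x_0 \in M$ and $n_0 \in p^{-1}(x_0)$, let $G = \pi_1(M, x_0)$, and set $H = p_*(\pi_1(N, n_0)) \leq G$. Since $p$ is a finite cover of degree $d$, the subgroup $H$ has index $d$ in $G$, and the cover $p$ corresponds to the conjugacy class $[H]$ of $H$ in $G$.

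Next I would recall that $G$ is finitely generated (as $M$ is a compact manifold) and that a finitely generated group has only finitely many subgroups of any given finite index. The reason is that each index-$d$ subgroup $K$ of $G$ is the stabilizer of a point in the transitive $G$-action on $G/K$, which is a homomorphism $G \to S_d$, and only finitely many such homomorphisms exist since $G$ is finitely generated and $S_d$ is finite. In particular, the set $\mathcal{C}_d$ of conjugacy classes of index-$d$ subgroups of $G$ is finite.

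The homeomorphism $f$ induces an outer automorphism $[\phi] \in \operatorname{Out}(G)$, which acts on $\mathcal{C}_d$ (inner automorphisms preserve each conjugacy class, so the action is well defined). Since $\mathcal{C}_d$ is finite, by pigeonhole some iterate $[\phi]^n$ fixes the class $[H]$. By the standard lifting criterion for covering spaces, this is exactly the condition that $f^n$ admit a continuous lift $g : N \to N$ with $p \circ g = f^n \circ p$.

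The only mildly delicate point will be the translation from the algebraic condition ``$[\phi]^n$ preserves $[H]$'' to the topological lifting criterion: one has to choose a path from $x_0$ to $f^n(x_0)$ and a matching preimage of $f^n(x_0)$ in $N$ so that a concrete representative of $[\phi]^n$ sends $H$ to $H$ as an actual subgroup, not just up to conjugation. This is a routine application of the path-lifting machinery and amounts to the only real bookkeeping in the argument.
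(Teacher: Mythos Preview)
Your proposal is correct and follows essentially the same route as the paper: both arguments reduce to the fact that a finitely generated group has only finitely many subgroups of a given finite index, and then use pigeonhole to find an iterate of $f_*$ that stabilizes the subgroup corresponding to the cover. The paper simply cites this finiteness fact and concludes that $(f^n)_* (p_*\pi_1(N)) = p_*\pi_1(N)$ for some $n$, while you phrase the same step in terms of the induced $\operatorname{Out}(G)$-action on conjugacy classes; your extra care with basepoints and the outer-vs-inner distinction is just a more explicit treatment of what the paper handles implicitly.
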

\begin{proof}
    In order to lift a map $h: M \to M$ to $N$ by the cover $p$
    one must verify that $h_\ast p_\ast (\pi_1(N)) \subset
    p_\ast(\pi_1(N))$. Since $p$ is a finite cover, the subgroup $p_\ast(\pi_1(N))$
    has finite index in $\pi_1(M)$. Let $k \geq 1$ be this index.

    Given a group $G$ and $k  \ge  1$, there are finitely many subgroups of $G$ of
    index $k$ \cite[Theorem IV.4.7]{LyndonSchupp}. Therefore,
    there exists $n\ge 1$ such that $(f^n)_\ast (p_\ast(\pi_1(N)))
    = p_\ast(\pi_1(N))$ and one can lift $f^n$ to $N$ as
    desired.
    %This can be proved from the properties of covering spaces, lifts, and
%    fundamental groups, using the following algebraic fact:
%    given a group $G$ and $n  \ge  1$, there are finitely many subgroups of $G$ of
%    index $n$ \cite[Theorem IV.4.7]{LyndonSchupp}.
\end{proof}
\begin{prop} \label{liftuniq}
    Suppose
    \begin{itemize}
        \item
        $f:M \to M$ is a diffeomorphism,
        \item
        $E_M \subset TM$ is a $Df$-invariant subbundle,
        \item
        $p: N \to M$ is a finite normal cover,
        \item
        $E_N \subset TN$ is the lift of $E_M$, and
        \item
        $g: N \to N$ is a diffeomorphism such that $p \circ g = f^n \circ p$
        for some $n  \ge  1$.  \end{itemize}
    If there is a unique $g$-invariant foliation $\cW_N$ tangent to $E_N$, then
    there is also a unique $f$-invariant foliation $\cW_M$ tangent to $E_M$.
\end{prop}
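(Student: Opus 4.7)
The plan divides the proof into establishing uniqueness of $\cW_M$ by a standard lift-and-descend argument, and existence by showing the unique foliation $\cW_N$ on $N$ is invariant under the deck group of $p$ and so descends to $M$. Throughout, write $\Delta$ for the deck group of $p:N\to M$, finite since $p$ is a finite cover. Because $p$ is normal, $M=N/\Delta$, and because $g$ is a lift of $f^n$ one checks that $g\Delta g^{-1}=\Delta$ (conjugating $p\circ\alpha=p$ by $g$ and using $p\circ g=f^n\circ p$). The distribution $E_N=p^*E_M$ is $\Delta$-invariant. For uniqueness, let $\cW,\cW'$ be $f$-invariant foliations of $M$ tangent to $E_M$. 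Each pullback is tangent to $E_N$; a direct computation from $p\circ g=f^n\circ p$ and the $f^n$-invariance of $\cW$ gives $g(p^{-1}\cW)=p^{-1}\cW$, so $p^{-1}\cW$ is $g$-invariant. By hypothesis $p^{-1}\cW=\cW_N=p^{-1}\cW'$, and projecting yields $\cW=\cW'$.

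For existence, the goal is to show $\alpha(\cW_N)=\cW_N$ for every $\alpha\in\Delta$, so that $\cW_N$ projects to a well-defined foliation on $M$. Since $\alpha$ preserves $E_N$, the translate $\alpha(\cW_N)$ is a foliation tangent to $E_N$. Setting $\alpha'=g\alpha g^{-1}\in\Delta$, one computes
\[
g\bigl(\alpha(\cW_N)\bigr)=(g\alpha)(\cW_N)=(\alpha' g)(\cW_N)=\alpha'(\cW_N),
\]
so $g$ permutes the finite orbit $\{\alpha(\cW_N):\alpha\in\Delta\}$ via the inner automorphism $\sigma_g:\alpha\mapsto g\alpha g^{-1}$ of $\Delta$. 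The main obstacle of the proof is concluding that this orbit reduces to a single point, given only that $\cW_N$ is the unique $g$-fixed foliation tangent to $E_N$. The intended approach is to exploit that $\sigma_g$ has finite order $k$, since $\Delta$ is finite: then $g^k$ centralizes $\Delta$ and descends to a self-map of $M$ (namely $f^{nk}$), and every orbit element $\alpha(\cW_N)$ becomes $g^k$-invariant. Combining this with the fact that $\cW_N$ is the unique $g$-fixed point in this finite orbit, one forces each $\alpha(\cW_N)$ to equal $\cW_N$ by tracing through the cyclic $g$-structure on the orbit and invoking the uniqueness hypothesis.

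With the $\Delta$-invariance of $\cW_N$ in hand, $\cW_N$ descends to a foliation $\cW_M$ of $M$ tangent to $E_M$, and $\cW_M$ is $f^n$-invariant because $\cW_N$ is $g$-invariant and $g$ is a lift of $f^n$. To upgrade $f^n$-invariance to $f$-invariance, observe that $f(\cW_M)$ is tangent to $E_M$ (as $E_M$ is $Df$-invariant) and is itself $f^n$-invariant (since $f^n\circ f=f\circ f^n$), so by the uniqueness argument of the first paragraph applied to $\cW_M$ and $f(\cW_M)$, we conclude $f(\cW_M)=\cW_M$. This completes the plan.
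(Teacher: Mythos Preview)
Your uniqueness argument and the final upgrade from $f^n$-invariance to $f$-invariance are both correct and match the paper. The gap is in the existence step, where you claim $\alpha(\cW_N)=\cW_N$ for every $\alpha\in\Delta$. You correctly show that $g$ permutes the finite set $O=\{\alpha(\cW_N):\alpha\in\Delta\}$, that $\cW_N$ is its unique $g$-fixed element, and that every element of $O$ is $g^k$-invariant for $k=\operatorname{ord}(\sigma_g)$. But none of this forces $O$ to be a singleton: a permutation of a finite set with exactly one fixed point need not be the identity (a transposition on three points, say). The ``cyclic $g$-structure'' you invoke adds nothing further, and the uniqueness hypothesis concerns $g$, not the iterate $g^k$, so knowing that each $\alpha(\cW_N)$ is $g^k$-invariant is of no direct use.

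The paper's proof follows essentially the same outline and in fact glosses over the same point: it replaces $g$ by an iterate $g^m$ centralizing the deck group and then invokes uniqueness for $g^m$, which is a priori a stronger hypothesis than what is assumed. In the paper's actual application this is harmless, because the uniqueness on the cover ultimately comes from Proposition~\ref{uniqfoln}, whose proof characterizes the leaves intrinsically via forward orbits of $f$ and therefore yields uniqueness for every iterate simultaneously. If you want the proposition to stand on its own, either strengthen the hypothesis to uniqueness for an iterate of $g$ that centralizes $\Delta$, or supply a genuinely new argument for $\Delta$-invariance; the sketch you give does not close the gap.
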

\begin{proof}
    First, suppose there is a unique $f^n$-invariant foliation $\cW$ tangent
    to $E_M$.
    Then, $f(\cW)$ is also an
    $f^n$-invariant foliation and by uniqueness $f(\cW)=\cW$.
    That is, $\cW$ is an $f$-invariant foliation and since any $f$-invariant
    foliation is also $f^n$-invariant, it is the unique one.
    This shows that we may freely replace $f$ and $g$ by their iterates in the
    rest of the proof.

    Let $\Gamma$ be the finite group of deck transformations $\gam:N \to N$
    associated to the cover $p:N \to M$.
    Note that $\gam \mapsto g \inv \gam g$ is an automorphism of $\Gamma$.
    As $\Gamma$ is finite, there is $m$ such that
    $\gam = g^{-m} \circ \gam \circ g^m$ for all $m$.
    By replacing $f$ and $g$ by iterates, assume that $m = 1$ and
    $p \circ g = f \circ p$.

    If $\cW_N$ is the unique $g$-invariant foliation tangent to $E_N$,
    $\gam \circ g = g \circ \gam$ implies that $\gam(\cW_N)$ is also $g$-invariant, and as
    $E_N$ was lifted from a subbundle of $TM$, $\gam(\cW_N)$ is also tangent to
    $E_N$.  By uniqueness, $\gam(\cW_N)=\cW_N$ for all $\gam \in \Gam$, so $\cW_N$
    projects down to a foliation $\cW_M$ on $M$.
    As $p \circ g = f \circ p$ and locally $p$ is a diffeomorphism, it follows
    that $\cW_M$ is $f$-invariant.
    Uniqueness of $\cW_N$ implies uniqueness of $\cW_M$.
\end{proof}
We now prove Proposition \ref{proplist4} assuming Lemma \ref{lemmadc}
and Proposition \ref{proplist3}.

\begin{proof}
    [Proof of Proposition \ref{proplist4}.]
    There is a finite cover $p : M_1 \to M_A$ such that when lifted to $M_1$
    the bundles $\Eu$, $\Ec$, $\Es$ are orientable.
    One can show that $M_1$ is again a suspension manifold.
    By Lemma \ref{canlift}, there is a map $f_1:M_1 \to M_1$ such that
    $p \circ f_1 = f^k \circ p$.
    By Proposition \ref{proplist3}, there is $n$ such that $f_1^n$ is homotopic
    to the identity.
    Then $g := f_1^{2n}$ is also homotopic to the identity and preserves the
    orientations of the bundles.
    Note that if there is a $g$-periodic 2-torus tangent to $\Ecu$ or $\Ecs$,
    it projects to an $f$-periodic surface tangent to $\Ecu$ or $\Ecs$ which must
    again be a torus.
    As such a torus is ruled out by assumption, the conditions of Lemma
    \ref{lemmadc} hold and there are unique $g$-invariant foliations on $M_1$
    tangent to $\Ecu$ and $\Ecs$.
    Since $p \circ g = f^{2 n k} \circ p$, Lemma \ref{liftuniq} implies that there are
    unique $f$-invariant foliations on $M_A$ tangent to $\Ecu$ and $\Ecs$.
\end{proof}

%%%%%%%%%%%%%%%%%%%%%%%%%%%%%%%%%%%%%%%%%%%%%%%%%%%%%%%%%%%%%%%%%%%%%%%%%%%%%%%%%%%%%%%%%%%%%%%%%%%

\section{Branching foliations}\label{SectionBranchingFoliations}

The starting point for many recent results in partial hyperbolicity
is the masterful construction by Burago and Ivanov of branching foliations
associated to the dynamics \cite{BI}.
Using Novikov's compact leaf theorem and other deep properties in the foliation
theory of 3-manifolds, Burago and Ivanov proved a number of results which may
be stated as follows.

%Burago and Ivanov proved the existence of branching foliations for
%partially hyperbolic systems in dimension three \cite{BI}.
%The following theorem is the starting point of all the results on dynamical coherence. Its proof is quite involved and difficult and it also depends on deep theorems of the theory of foliations of 3-manifolds such as Novikov's Theorem \cite{CandelConlon}.
%

\begin{teo}
    [Burago-Ivanov \cite{BI}] \label{thmbran}
    Let $f_0:M \to M$ be a partially hyperbolic diffeomorphism on a 3-manifold
    such that $\Eu$, $\Ec$, and $\Es$ are orientable and the derivative $Df_0$
    preserves these orientations.
    Let $f:\tilde M \to \tilde M$ be a lift of $f_0$ to the universal cover.

    Then, there is a collection $\Fcs$ of properly embedded planes in $\tilde M$
    such that for every $L \in \Fcs${:}
    \begin{enumerate}
        \item
        $L$ is tangent to $\Ecs$,
        \item
        $f(L) \in \Fcs$,
        \item
        $\gam(L) \in \Fcs$ for all deck transformations $\gam \in \pi_1(M)$,
        \item \label{twospaces}
        $L$ cuts $\tilde M$ into two closed half-spaces $L^+$ and $L^-$, and
        \item \label{nocross}
        every $L' \in \Fcs$ lies either in $L^+$ or $L^-$ (or both if
        $L=L'$).  \end{enumerate}
    Finally, for every $x \in \tilde M$ there is at least one $L \in \Fcs$ such that
    $x \in L$.
\end{teo}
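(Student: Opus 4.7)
The plan is to obtain $\Fcs$ as a limit of honest foliations approximating the (possibly non-integrable) distribution $\Ecs$. The starting observation is that, although $\Ecs$ itself need not be integrable, it can be regularised: for every $\eta>0$ I would produce a $C^1$ distribution $E_\eta$ which is uniquely integrable and satisfies $\|E_\eta-\Ecs\|_{C^0}<\eta$. The construction proceeds by smoothing $\Ecs$ along the stable foliation $\Ws$ (which integrates $\Es$ by the stable manifold theorem) using a convolution in suitable stable/center coordinates, so that the regularised planes contain the stable leaves and hence have the required Frobenius property.

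Next, let $\F_\eta$ be the foliation integrating $E_\eta$, lift to the universal cover $\tilde M$, and argue that each leaf of the lifted foliation $\tilde\F_\eta$ is a properly embedded plane separating $\tilde M$ into two closed half-spaces. This is where the orientation hypothesis on $\Es,\Ec,\Eu$ and Novikov's compact leaf theorem enter: compact leaves of $\tilde\F_\eta$ would force Reeb components or toral leaves, which are incompatible with $\Eu$ being transverse to $E_\eta$ and consistently oriented (here one uses that any compact leaf transverse to a continuous oriented line field would have to carry a nowhere-vanishing vector field, and Novikov restricts the topology of such leaves severely). Having ruled out bad leaves, each leaf of $\tilde\F_\eta$ is a plane by standard leaf-space arguments.

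Now I would take $\eta_n\to 0$ and extract a branching foliation as a limit. Fix a countable dense set $\{x_k\}\subset\tilde M$, choose for each $k,n$ the leaf $L^n_k\in\tilde\F_{\eta_n}$ through $x_k$, and pass to a subsequence so that each $L^n_k$ converges in the pointed Hausdorff topology to a closed set $L_k$. Since $E_{\eta_n}\to\Ecs$ uniformly, the limit $L_k$ is a complete surface tangent to $\Ecs$, hence a properly embedded plane, and it inherits the separation property (4) from the approximants. Defining $\Fcs$ as the closure (under $f$, deck transformations, and limits) of $\{L_k\}$ gives a family satisfying (1)--(3) automatically: $f$-invariance of $\Ecs$ and its $Df$-invariance make $f(L_k)$ a leaf tangent to $\Ecs$ which is itself a Hausdorff limit of $f(L^n_k)\in\tilde\F_{\eta_n}'$ for the pushed-forward approximating foliations, and similarly for $\gamma\in\pi_1(M)$. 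Surjectivity onto $\tilde M$ follows from density of $\{x_k\}$ together with compactness of limits of planes through points in a bounded region.

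The main obstacle is property (\ref{nocross}), the no-crossing of limit leaves. Two approximating leaves in $\tilde\F_{\eta_n}$ never cross (they belong to a foliation), but two different Hausdorff limits could in principle meet transversely. To rule this out I would use the common orientation of $\Ecs$: if $L,L'\in\Fcs$ met transversely along a curve, one could transport the coorientation of $\Ecs$ along a small loop around the transverse intersection to produce a contradiction, exactly mimicking how oriented integrable $2$-plane fields in $\RR^3$ cannot have transverse leaf crossings. Combining this local argument with the fact that each leaf separates $\tilde M$ forces any two leaves to lie in one of the closed half-spaces bounded by the other, giving (\ref{nocross}). I expect this separation/orientation argument, together with the preliminary regularisation step, to be by far the most delicate part of the proof; the rest is a combination of stable manifold theory, Novikov, and standard compactness of planes in $\RR^3$.
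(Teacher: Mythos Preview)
The paper does not prove this theorem: it is quoted as a result of Burago--Ivanov \cite{BI} and used as a black box, so there is no ``paper's own proof'' to compare against.  That said, your sketch diverges substantially from how Burago and Ivanov actually construct $\Fcs$, and it has a real gap.

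Burago--Ivanov do \emph{not} build the branching foliation as a limit of honest approximating foliations.  They work directly with complete surfaces tangent to $\Ecs$: through each point there are many such surfaces, and the orientation of $\Eu$ lets one pick out an \emph{extremal} one (the ``uppermost'' surface, roughly speaking).  The collection of these extremal surfaces is $\Fcs$, and the no-crossing property (\ref{nocross}) is a direct consequence of extremality, not of an orientation/transversality argument.  The approximation by true foliations that you are gesturing at is actually their Theorem~7.2 (restated here as Proposition~\ref{propbi72}), and it goes in the \emph{other direction}: once $\Fcs$ exists, one perturbs it to a nearby honest foliation.

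Your regularisation step is also suspect: smoothing $\Ecs$ along $\Ws$ does not obviously yield an integrable plane field, since containing the stable leaves is far from the Frobenius condition.  But the more serious gap is in your argument for (\ref{nocross}).  Two surfaces $L,L'$ that are both tangent to the \emph{same} continuous plane field $\Ecs$ can never meet transversely in the differential-topological sense---at any common point they have the same tangent plane---so your ``coorientation along a loop around the transverse intersection'' argument has no traction.  The crossing one must rule out is purely \emph{topological}: $L$ and $L'$ may coincide along an open set and then branch apart, with $L'$ passing from $L^+$ to $L^-$.  Nothing in a local orientation argument prevents this; it is exactly why the Burago--Ivanov construction needs the global extremal-surface selection rather than an arbitrary limiting procedure.
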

An analogous collection $\Fcu$ also exists. For the purposes of this paper, a \emph{branching foliation} will be a collection as in the statement of this theorem. The planes of the collection will be called \emph{leaves} of the branching foliation in analogy with true foliations.
Item (\ref{nocross}) of the list is equivalent to saying that leaves of the
branching foliation do not topologically cross.

\begin{assumption}
    For the rest of this section, assume $f_0$, $f$, $M$, $\tilde M$, and $\Fcs$ are
    as in Theorem \ref{thmbran}.
\end{assumption}

\begin{prop}
    \label{csuint}
    Each leaf of $\Fcs$ intersects an unstable leaf in at most
    one point.
\end{prop}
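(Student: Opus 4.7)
My plan is to suppose for contradiction that $L \in \Fcs$ and an unstable leaf $W$ meet in two distinct points $x \ne y$, and then to reduce to a local transversality statement by backward iteration. Let $\alpha \subset W$ be the compact unstable arc from $x$ to $y$. Because partial hyperbolicity implies that $Df^{-N}$ contracts $\Eu$ uniformly, the length of $f^{-n}(\alpha)$ tends to zero as $n \to \infty$; writing $x_n := f^{-n}(x)$ and $y_n := f^{-n}(y)$, this forces $\dist_{\tilde M}(x_n,y_n) \to 0$. By item~(2) of Theorem~\ref{thmbran} (applied both to $f$ and to $f^{-1}$, each of which preserves $\Fcs$), the plane $L_n := f^{-n}(L)$ is again a leaf of $\Fcs$, and it contains both $x_n$ and $y_n$.

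Next I would establish a uniform local transversality lemma: there exists $\eps > 0$ such that whenever $p \in \tilde M$, $L' \in \Fcs$ contains $p$, and $\beta \subset W^u(p) \cap B(p,\eps)$ is the connected unstable arc through $p$, then $L' \cap \beta = \{p\}$. This follows from compactness of $M$. Since the continuous distributions $\Ecs$ and $\Eu$ are transverse with a uniform angle bound, in a chart of uniform size centered at $p$ one can choose coordinates in which $\Ecs(p)$ is horizontal and $\Eu(p)$ is vertical. Any $C^1$ surface through $p$ tangent to $\Ecs$ then appears as a graph $z = \phi(x,y)$ with uniformly small gradient, and $\beta$ as a graph $(x,y) = \psi(z)$ with uniformly small derivative; in a sufficiently small chart the two graphs meet in exactly one point.

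Applying the lemma with $p = x_n$, $L' = L_n$, and $\beta$ the connected component of $f^{-n}(\alpha)$ containing $x_n$ would then close the argument: once $n$ is large enough that both $y_n \in B(x_n,\eps)$ and $f^{-n}(\alpha) \subset B(x_n,\eps)$, the lemma forces $y_n = x_n$, contradicting $y_n \ne x_n$. I expect the main technical obstacle to be the uniform local transversality lemma, since leaves of $\Fcs$ are only $C^1$ injectively immersed planes tangent to a merely continuous distribution; one cannot invoke smoothness of a foliation chart and must rely instead on uniform continuity of $\Ecs$ and $\Eu$ on the compact base $M$, together with the fact that tangency to $\Ecs$ and the uniform transversality angle together give uniform control on the slope of every such disk.
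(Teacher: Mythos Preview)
Your argument is correct, but it takes a genuinely different route from the paper. The paper's proof is a two-line topological argument that does not use the dynamics at all: each leaf $L$ separates $\tilde M$ into two closed half-spaces $L^+$ and $L^-$ (item~(\ref{twospaces}) of Theorem~\ref{thmbran}), and since $\Eu$ is oriented, at every point of $L$ the unstable direction points from $L^-$ into $L^+$. Hence any oriented curve tangent to $\Eu$ can pass from $L^-$ to $L^+$ at most once, so it meets $L$ in at most one point.

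Your approach instead iterates backward to shrink the unstable arc and then applies a uniform local transversality lemma. This is valid: $f^{-n}(L)$ is again a $C^1$ surface tangent to $\Ecs$ (by $Df$-invariance of $\Ecs$, so you do not even need $f^{-n}(L)\in\Fcs$), and your graph argument for the local lemma goes through because $\Ecs$ and $\Eu$ are uniformly continuous and uniformly transverse on the compact base $M$. The tradeoffs are as follows. The paper's argument is much shorter and purely topological, but it relies on the global separation property of leaves of $\Fcs$, which is specific to the Burago--Ivanov branching foliation. Your argument is longer and requires the dynamics, but it proves a slightly stronger fact: any $C^1$ surface tangent to $\Ecs$ (not just a leaf of $\Fcs$) meets each unstable leaf in at most one point.
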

\begin{proof}
    On the universal cover $\tilde M$, one may choose an orientation of $\Eu$
    such that for a leaf $L \in \Fcs$ and at all points $x \in L$
    the orientation of $\Eu(x)$ is away from $L^-$ and into $L^+$.
    Therefore, any oriented path
    tangent to $\Eu$ passes from $L^-$ into $L^+$ when intersecting $L$, and
    a path can do this at most once.
\end{proof}
%This can be proved by considering the orientation of $\Eu$ and
%using item \ref{twospaces} of Theorem \ref{thmbran} since given $L
%\in \Fcs$ once an unstable curve intersects $L$ it stays in
%$L^+$.

The following proposition,
which follows from \cite[Remark 1.16]{BW}, will be used to
establish dynamical coherence.

\begin{prop} \label{disjointleaves}
    If every $x \in \tilde M$ is contained in exactly one leaf of $\Fcs$,
    then $\Fcs$ is a true foliation.
\end{prop}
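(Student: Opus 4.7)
The plan is to build a local product chart around each $x_0 \in \tilde M$, demonstrating that, under the uniqueness hypothesis, $\Fcs$ assembles into a $C^0$ foliation with $C^1$ leaves. Write $L_x$ for the unique leaf of $\Fcs$ through $x$.

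First I would parametrize the leaves near $L_{x_0}$ by a short unstable arc $\gamma : (-\delta,\delta) \to \tilde M$ with $\gamma(0) = x_0$. By Proposition \ref{csuint}, each leaf of $\Fcs$ meets $\gamma$ in at most one point, so $t \mapsto L_{\gamma(t)}$ is injective. Orienting $\Eu$ as in the proof of Proposition \ref{csuint} (so that $\gamma$ crosses each leaf $L$ from $L^-$ into $L^+$), the non-crossing property (item (\ref{nocross}) of Theorem \ref{thmbran}) forces this family to be strictly monotone along $\gamma$: for $s<t$, $L_{\gamma(s)} \subset L_{\gamma(t)}^-$ and $L_{\gamma(t)} \subset L_{\gamma(s)}^+$.

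Second, I would show that $\{L_{\gamma(t)} : t \in (-\delta,\delta)\}$ sweeps out a neighborhood $U$ of $x_0$. In an exponential chart around $x_0$ with $E^{cs}(x_0)$ horizontal and $\Eu(x_0)$ vertical, continuity of $E^{cs}$ together with its uniform transversality to $\Eu$ implies that every $\Fcs$-leaf passing through a point sufficiently close to $x_0$ is locally the graph of a $C^1$ function over $E^{cs}(x_0)$ with small derivative. Any such graph meets the short unstable arc $\gamma$ in exactly one point. Thus for $y$ close enough to $x_0$ the leaf $L_y$ intersects $\gamma$ in a unique $\gamma(t(y))$, and by the uniqueness hypothesis $L_y = L_{\gamma(t(y))}$. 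This defines a map $\pi(y) := t(y)$ from $U$ onto some $(-\delta',\delta')$ whose fibers are precisely the local plaques of $\Fcs$.

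Continuity of $\pi$ would then follow from standard $C^1_{\text{loc}}$-compactness for surfaces tangent to the continuous distribution $E^{cs}$: any subsequential limit of $L_{y_n}$, for $y_n \to y$, is a $C^1$ surface tangent to $E^{cs}$ through $y$ which, by non-crossing, cannot topologically cross any other leaf of $\Fcs$; by the uniqueness hypothesis it must coincide with $L_y$, forcing $\pi(y_n) \to \pi(y)$. The chart $(p,t) \mapsto L_{\gamma(t)} \cap \Wu_{\text{loc}}(p)$ then realizes $U$ as a product box whose plaques are leaves of $\Fcs$, and the construction is canonical, so it patches together over $\tilde M$ and descends to $M$ by the deck-invariance in item (3) of Theorem \ref{thmbran}. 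The main obstacle I anticipate is step two, namely ruling out ``pockets'' in which a leaf $L_y$ might sit near $x_0$ without ever meeting $\gamma$; purely topological reasoning from properness and non-crossing seems insufficient, and one genuinely needs the quantitative local flatness coming from continuity of $E^{cs}$ and its uniform transversality with $\Eu$.
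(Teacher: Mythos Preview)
The paper does not supply its own argument for this proposition; it simply attributes the result to \cite[Remark~1.16]{BW}. Your proposal is therefore a reconstruction of the underlying reasoning rather than something to be compared against a proof in the paper, and the overall strategy---parametrising nearby leaves by a short unstable arc, using uniform transversality of $E^{cs}$ and $E^u$ to realise leaves locally as graphs, and invoking the uniqueness hypothesis to conclude that the product-box picture is genuine---is the correct one and is essentially what lies behind the cited remark.

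One small point: your continuity argument for $\pi$ via subsequential limits of the leaves $L_{y_n}$ tacitly assumes that such a limit surface is itself a leaf of $\Fcs$, so that the uniqueness hypothesis can be applied to it. Completeness of the branching foliation under $C^1_{\mathrm{loc}}$ limits is indeed part of the Burago--Ivanov construction, but it is not among the properties listed in Theorem~\ref{thmbran}. A cleaner route, using only what is stated there, avoids this: if $\pi(y_n)\not\to\pi(y)$, pass to a subsequence with $\pi(y_{n_k})\to t^*\neq\pi(y)$, pick $s$ strictly between $\pi(y)$ and $t^*$, and use the half-space ordering from item~(\ref{nocross}) to conclude that $y=\lim y_{n_k}$ lies on $L_{\gamma(s)}$, contradicting uniqueness since $y\in L_{\gamma(\pi(y))}$ with $\pi(y)\neq s$.
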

Even when $\Fcs$ is not a true foliation, it is close to a true foliation in a
certain sense.  To state this precisely, we introduce a few
more definitions.

In a metric space $X$ where $d(\cdot,\cdot)$ is the distance function
and $Y$ and $Z$ are non-empty subsets,
define
$\dist(x,Z) := \inf_{z \in Z} d(x,z)$ and
Hausdorff distance as
\[
    \HD(Y,Z) := \max\{ \sup_{y \in Y} \dist(y,Z),
    \sup_{z \in Z} \dist(z, Y) \}.
\]
The Hausdorff distance may be infinite in general.

Suppose $\cF_1$ and $\cF_2$ are collections of subsets of a metric space $X$.
Then, $\cF_1$ and $\cF_2$ are \emph{almost parallel} if there is
$R > 0$ such that
\begin{itemize}
    \item
    for each $L_1 \in \cF_1$, there is $L_2 \in \cF_2$ with
    $\HD(L_1, L_2) < R$, and
    \item
    for each $L_2 \in \cF_2$, there is $L_1 \in \cF_1$ with
    $\HD(L_2, L_1) < R$.  \end{itemize}
In our case, these collections will either be foliations or branching
foliations and $X$ will be the universal cover of a manifold.

\begin{prop} \label{propbi72}
    The branching foliation $\Fcs$ is almost parallel to a foliation $\cF$ of $\tilde M$ by planes and this
    foliation quotients down to a foliation $\cF_0$ on $M$.
    Moreover, $\cF_0$ contains a compact surface $S$ if and only if there is $L \in
    \Fcs$ which quotients down to a compact surface on $M$ homeomorphic to $S$.
\end{prop}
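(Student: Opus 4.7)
The plan is to invoke the approximation machinery that Burago and Ivanov develop in \cite{BI}. Their construction produces a continuous, surjective, $\pi_1(M)$-equivariant map $h:\tilde M \to \tilde M$ at bounded $C^0$-distance $R$ from the identity (the bound $R<\infty$ follows from the fact that $h$ commutes with the deck group and therefore descends to a continuous self-map of the compact manifold $M$) such that the preimages $\{h^{-1}(L): L\in\Fcs\}$ are the leaves of a genuine foliation $\cF$ of $\tilde M$. Granting this, I would check the three conclusions in turn.

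For almost parallelism, given $\tilde L\in\cF$, its image $h(\tilde L)$ is a leaf of $\Fcs$ and every point of $\tilde L$ is within distance $R$ of $h(\tilde L)$ and vice versa, hence $\HD(\tilde L, h(\tilde L))\le R$. Conversely, given $L\in\Fcs$, any leaf $\tilde L\subset h^{-1}(L)$ of $\cF$ satisfies $h(\tilde L)\subset L$ and the same bound applies. For descent to $M$, equivariance of $h$ together with item (3) of Theorem \ref{thmbran} (deck-invariance of $\Fcs$) implies $\cF$ is itself $\pi_1(M)$-invariant, so it projects to a foliation $\cF_0$ on $M$.

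For the compact-leaf correspondence, I would argue through stabilizers in $\pi_1(M)$. A leaf $P$ (of either collection) projects to a compact surface in $M$ if and only if its stabilizer $\Gamma_P<\pi_1(M)$ acts cocompactly on $P$; in that case the projection is homeomorphic to $P/\Gamma_P$, a surface whose homeomorphism type is determined by $\Gamma_P$. Since $h$ is $\pi_1(M)$-equivariant and sends $\tilde L\in\cF$ to a leaf $L\in\Fcs$ at Hausdorff distance $\le R$, the stabilizers satisfy $\Gamma_{\tilde L}\subset\Gamma_L$, and the reverse containment follows because the action of any $\gamma\in\Gamma_L$ must preserve the $R$-neighborhood of $L$, which contains only finitely many leaves of $\cF$ through a given point, so a power of $\gamma$ must preserve $\tilde L$; together with the fact that $h$ restricts to a proper equivariant map between the quotient surfaces, the two compact surfaces are homeomorphic.

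The main obstacle is the topological correspondence in the last part: almost parallelism alone does not determine the homeomorphism type of projected leaves, and one really needs that the approximating map $h$ of Burago--Ivanov is well-behaved (proper and of degree one) when restricted to a single leaf of $\cF$. This ultimately rests on the detailed leaf-by-leaf nature of the Burago--Ivanov construction, where $\cF$ is obtained from $\Fcs$ by a controlled perturbation along the unstable direction, so that the restriction $h|_{\tilde L}:\tilde L\to h(\tilde L)$ is a proper monotone map of planes, hence descends to a homeomorphism between the corresponding compact quotient surfaces.
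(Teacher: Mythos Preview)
Your approach is the paper's approach: the paper gives no argument beyond the single sentence ``This follows as a consequence of \cite[Theorem~7.2]{BI},'' and you are correctly unpacking what that theorem supplies. Two points are worth tightening.

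First, a small imprecision: the Burago--Ivanov map $h$ sends each leaf of the true foliation $\cF$ \emph{onto} a leaf of $\Fcs$, but $h$ is not globally injective, so describing the leaves of $\cF$ as the preimages $h^{-1}(L)$ is not accurate. You tacitly correct this later (``any leaf $\tilde L\subset h^{-1}(L)$''), but note that for the Hausdorff bound in the converse direction you need $h(\tilde L)=L$, not merely $h(\tilde L)\subset L$; surjectivity onto the leaf is part of the Burago--Ivanov statement.

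Second, your compact-leaf argument via stabilizer commensurability and ``proper monotone maps of planes'' is more work than required. Theorem~7.2 of \cite{BI} already asserts that $h$ restricted to each leaf of $\cF$ is a $C^1$ diffeomorphism onto the corresponding leaf of $\Fcs$. Together with the $\pi_1(M)$-equivariance of $h$, this gives directly a homeomorphism between the projected surfaces in $M$, so the ``if and only if'' and the homeomorphism-type correspondence follow without the stabilizer detour you describe.
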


This follows as a consequence of \cite[Theorem 7.2]{BI}.

The existence of a compact surface coming from the branching foliations
has strong dynamical consequences.

\begin{prop} \label{propcompactleaf}
    If $\Fcs$ has a leaf $L$ which projects to a compact surface on $M$,
    then there is a normally repelling $f_0$-periodic torus tangent to $\Ecs$.
\end{prop}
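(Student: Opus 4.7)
The plan is to obtain the periodic torus by first locating a compact leaf in the approximating foliation, confirming its topology is that of a torus, and then promoting it to an $f_0$-periodic object using normal hyperbolicity.

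First, I would apply Proposition \ref{propbi72}: since $\Fcs$ contains a leaf $L$ whose projection $T := \pi(L) \subset M$ is compact, $\cF_0$ contains a compact leaf $S$ of the same homeomorphism type as $T$. The surface $S$ is tangent to $\Ecs$, so $\Ec|_S$ is a continuous, nowhere vanishing, orientable line field on $S$. Poincar\'e--Hopf gives $\chi(S)=0$, and since $\Es$ and $\Ec$ are orientable, $TS=\Ecs|_S$ is an orientable rank-$2$ bundle, making $S$ an orientable closed surface of zero Euler characteristic, hence a $2$-torus; the same conclusion therefore applies to $T$.

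Next, I would show that some iterate of $f_0$ fixes $T$. Because $\Fcs$ is $f$-invariant and preserved by every deck transformation, the collection $\cC\subset M$ of compact surfaces arising as projections of leaves of $\Fcs$ is invariant under $f_0$. The no-crossing condition of Theorem \ref{thmbran}(\ref{nocross}) implies that two distinct elements of $\cC$ are disjoint embedded tori, each tangent to $\Ecs$. Because $\Eu$ makes a uniform angle with $\Ecs$ and is uniformly expanded by $Df_0$, each $T'\in \cC$ is normally expanded in the $\Eu$-direction; this should yield a tubular neighbourhood of $T'$ in $M$ that contains no other element of $\cC$, of size uniformly bounded below by the data of the partially hyperbolic splitting. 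Compactness of $M$ then forces $\cC$ to be finite, so $f_0$ acts as a permutation on $\cC$ and some iterate $f_0^n$ fixes $T$.

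Finally, normal repulsion is immediate: $T$ is tangent to $\Ecs$ and transverse to $\Eu$, and $Df_0^n$ uniformly expands $\Eu$, so $T$ is a normally repelling $f_0$-periodic torus.

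The hardest step is the finiteness of $\cC$, specifically converting ``normally expanding'' into a uniform isolation estimate for compact $\Ecs$-tangent surfaces. The delicate point is to combine the uniform transversality between $\Eu$ and $\Ecs$ with the continuity of $\Fcs$ so as to rule out two disjoint compact tori of $\cC$ lying arbitrarily $C^0$-close; once this is in place, compactness of $M$ and the $f_0$-invariance of $\cC$ immediately deliver the periodic torus.
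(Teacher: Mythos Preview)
Your argument has a genuine gap at the finiteness of $\cC$. First, two distinct elements of $\cC$ need not be disjoint: item~(\ref{nocross}) of Theorem~\ref{thmbran} only prevents topological crossing, and a branching foliation explicitly allows distinct leaves to share points, so their compact projections may touch. More importantly, even if the tori in $\cC$ were pairwise disjoint, your ``uniform isolation'' claim does not follow from normal expansion. The expansion of $Df_0$ along $\Eu$ is a dynamical property; it separates an \emph{invariant} torus from nearby leaves under iteration, but it says nothing about the static distance between two tori in $\cC$, neither of which is yet known to be periodic. Concretely, nothing you have written rules out a sequence $T_n\in\cC$ accumulating on some limit surface (think of $\bbT^2\times\{1/n\}$ inside a product region), and compactness of $M$ alone cannot exclude this. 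A minor additional slip: the surface $S\in\cF_0$ from Proposition~\ref{propbi72} is only \emph{close} to tangent to $\Ecs$, not tangent; the Euler-characteristic argument should be run directly on $T$, which is genuinely tangent to $\Ecs$.

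The paper closes this gap by a different mechanism. It first observes that $T$ is an incompressible torus (its lift $L$ is a plane), and uses the classical fact that a compact $3$-manifold contains only finitely many disjoint incompressible tori \emph{up to isotopy}. This gives only that some iterate $f_0^n(T)$ is isotopic to $T$, not equal to it. The periodic torus is then produced by a limit argument: one takes accumulation points of the backward orbit $f_0^{-n_k}(T)$, passes to a limit leaf $L_\infty\in\Fcs$ (using \cite[Lemma~7.1]{BI}), checks that $L_\infty$ is still $\bbZ^2$-invariant and hence projects to a torus $T_\infty$, and finally uses the actual normal expansion along $\Eu$ in a tubular neighbourhood of $T_\infty$ to trap a genuinely $f_0$-periodic torus nearby. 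The periodic torus found is not $T$ itself but a limit object.
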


This is proved in Appendix \ref{Appendix-AnosovTori}, where partially hyperbolic systems with such
tori are studied in further detail.

In the specific case of 3-manifolds with solvable fundamental group, the
foliations without compact leaves have been classified.  This was done by
Plante for $C^2$ foliations where the
classification is up to equivalence \cite{Plante}.
For the foliations considered
in this paper, the classification only holds up to the notion of two
foliations being almost parallel when lifted to the universal cover
\cite{HP}.
In the case of suspension manifolds $M_A$, any such foliation, after lifting,
is almost parallel to one of two model foliations, $\Acs$ or $\Acu$, which will
be defined in the next section.

\begin{prop} \label{propmodel}
    If $\cF_0$ is a foliation of $M_A$ without compact leaves,
    then the lifted foliation $\cF$ on $\tM$ is almost parallel either to $\Acs$
    or $\Acu$.
\end{prop}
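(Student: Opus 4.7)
The plan is to combine Plante's classification of codimension-one foliations on solvmanifolds \cite{Plante} with the almost-parallel adaptation developed in \cite{HP}. First I would pass to the universal cover $\tM \cong \RR^3$ and observe that since $\cF_0$ has no compact leaves, Novikov's compact leaf theorem rules out Reeb components, so every leaf of the lifted foliation $\cF$ is a properly embedded plane. The leaf space $\cL$ is then a simply connected (possibly non-Hausdorff) one-manifold on which the deck group $\pi_1(M_A) \cong \ZZ^2 \rtimes_A \ZZ$ acts.

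Next I would study this action. The $\ZZ^2$ subgroup acts by translations along the torus fibers, while conjugation by the suspension generator acts on $\ZZ^2$ via the hyperbolic matrix $A$. Each plane leaf meets a horizontal slice $\RR^2\times\{t\}$ in a properly embedded collection of curves, and using the invariance of $\cF$ under the cocompact deck action together with the absence of torus leaves, one can (following Plante) attach a well-defined asymptotic direction in $\RR^2$ to these curves. The hyperbolicity of $A$ forces this asymptotic direction to coincide with one of the two eigenlines $v^s$ or $v^u$: any other direction would be exponentially distorted by iterates of $A$, which is incompatible with the fact that the leaf space is invariant under the suspension generator and carries only bounded transverse displacement over a fundamental domain.

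Having identified the asymptotic direction, I would then show that each leaf of $\cF$ lies within uniformly bounded Hausdorff distance of a corresponding model leaf of $\Acs$ (in the $v^s$ case) or $\Acu$ (in the $v^u$ case), and conversely. This is exactly the almost-parallel condition. The main obstacle will be this last metric step: Plante's original theorem delivers a topological equivalence under $C^2$ hypotheses via holonomy invariant transverse measures, whereas what is needed here is a uniform Hausdorff bound for merely continuous foliations. The workaround, carried out in \cite{HP}, is to exploit the fact that leaves do not topologically cross together with cocompactness of the deck action to upgrade the asymptotic alignment to a uniform Hausdorff estimate over a fundamental domain, which then spreads globally by equivariance.
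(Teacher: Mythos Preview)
The paper does not offer its own argument here: the line following the statement reads ``This is a restatement of \cite[Theorem~B.6]{HP}.'' Your sketch is a reasonable outline of the strategy behind that cited theorem, and the conceptual steps you list --- planes in the universal cover via absence of Reeb components, the deck-group action on the leaf space, forcing the asymptotic direction onto an eigenline of $A$, and the cocompactness upgrade to a uniform Hausdorff bound --- are in line with the circle of ideas in Plante's work and its extension in \cite{HP}.

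One caveat worth flagging: in your middle paragraph you propose to attach an asymptotic direction ``following Plante,'' but Plante's route to that direction goes through a holonomy-invariant transverse measure obtained from subexponential leaf growth, and that step genuinely uses $C^2$ regularity. In the setting of \cite{HP} the foliations are only $C^{0}$ with $C^1$ leaves, so the transverse-measure machinery is not available; the asymptotic alignment must instead be extracted directly by the coarser non-crossing and equivariance arguments you allude to at the end. In other words, the ``upgrade'' in your final paragraph is not merely a metric refinement of Plante's conclusion but a replacement for his entire mechanism. Your last paragraph essentially acknowledges this, but the phrasing of the middle paragraph might leave the impression that one can first run Plante's argument verbatim and only afterwards pass to the Hausdorff estimate, which would not work at this regularity.
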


This is a restatement of \cite[Theorem B.6]{HP}.

%%%%%%%%%%%%%%%%%%%%%%%%%%%%%%%%%%%%%%%%%%%%%%%%%%%%%%%%%%%%%%%%%%%%%%%%%%%%%%%%%%%%%%

\section{Solvmanifolds and model foliations}\label{Section-SolvAndModel}

Suppose $A$ is a hyperbolic $2 \times 2$ integer matrix with integer entries
and determinant plus-minus one.
Then, there is $\lam>1$ such that $A$ has an unstable eigenspace in $\bbR^2$ with
associated eigenvalue $\pm \lam$, and a
stable eigenspace with associated eigenvalue $\pm \lam \inv$.

Consider a manifold $\tM$ diffeomorphic to $\bbR^2 \times \bbR$, but with a
non-Euclidean geometry to be defined shortly.
Each point in $\tM$ can be written as $(v,t)$ with $v \in \bbR^2$ and $t \in \bbR$.
Define foliations $\Acs$ and $\Acu$ of dimension two by{:}
\begin{itemize}
    \item
    $(v_1,t_1) \in \Acs((v_2, t_2))$
    if $v_1-v_2$ is in the stable eigenspace of $A$,
    \item
    $(v_1,t_1) \in \Acu((v_2, t_2))$
    if $v_1-v_2$ is in the unstable eigenspace of $A$,
\end{itemize}
Define foliations $\As$ and $\Au$ of dimension one 
by{:}
\begin{itemize}
    \item
    $(v_1,t_1) \in \As((v_2, t_2))$
    if $t_1=t_2$ and
    $(v_1,t_1) \in \Acs((v_2, t_2))$,
    \item
    $(v_1,t_1) \in \Au((v_2, t_2))$
    if $t_1=t_2$ and
    $(v_1,t_1) \in \Acu((v_2, t_2))$.
\end{itemize}
\begin{figure}[t]
\begin{center}
\includegraphics{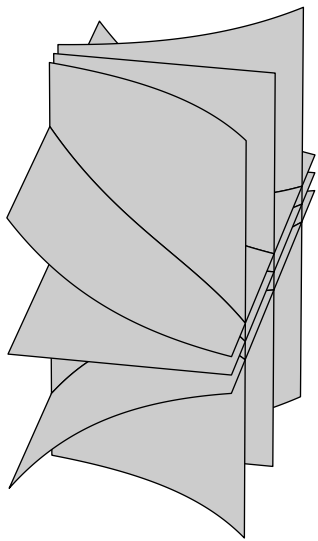}
\end{center}
\caption{A depiction of the model foliations $\Acs$ and $\Acu$.
    The planes separate apart in the left direction for one foliation and
    in the right direction for the other.
    Note that in the coordinate system for $\tM \approx \bbR^3$ introduced in
    Section \ref{Section-SolvAndModel},
    the foliations consist of geometric planes and it is the
    Riemannian metric which changes in $t$.
}
\end{figure}
Define a flow $\varphi:\tM \times \bbR \to \tM$ by $\varphi_t((v,s)) = (v,s+t)$
and let $\Ac$ be the foliation consisting of orbits of the flow.
Now, define a unique Riemannian metric on $\tM$ by requiring the
following properties{:}
\begin{itemize}
    \item
    at every point in $\tM$, an orthogonal basis of the tangent space can be
    formed by vectors in $\As$, $\Ac$, and $\Au$,
    \item
    $\varphi$ is a unit speed flow,
    \item
    if $\alpha$ is a simple curve in a leaf of $\Au$ with endpoints $(v_1,t)$
    and $(v_2,t)$, then $\length(\alpha) = \lam^t \|v_1-v_2\|$, and
    \item
    if $\alpha$ is a simple curve in a leaf of $\As$ with endpoints $(v_1,t)$
    and $(v_2,t)$, then $\length(\alpha) = \lam^{-t} \|v_1-v_2\|$.
\end{itemize}
In the last two items above, $\|v_1-v_2\|$ denotes the Euclidean distance in
$\bbR^2$.
Define functions $\gamma_i:\tM \to \tM$ by
\begin{align*}
    \gamma_1((v,t)) &= (v + (1,0), t), \\
    \gamma_2((v,t)) &= (v + (0,1), t), \\
    \gamma_3((A v,t)) &= (v, t+1).
\end{align*}
One can verify that these three functions are isometries which generate a
solvable group $G \subset \operatorname{Isom}(\tM)$ and that the quotient
$\tM / G$ is exactly the compact manifold $M_A$ defined earlier.
Further, the flow $\varphi$ quotients to an Anosov flow on $M_A$ and the
foliations $\cA^*$ quotient down to invariant foliations associated to the
flow.
Define $G_0 = \langle \gam_1, \gam_2 \rangle$.

% TODO: Make this an equation with label?
Define a function $p_1: \tM \to \bbR,\ (v,t) \mapsto t$.

\begin{lemma} \label{pconst}
    For all $\gam \in G$, one has $p_1(x) - p_1(y) = p_1(\gam(x)) - p_1(\gam(y))$.
    Further, $p_1 \circ \gam = p_1$ exactly when $\gam \in G_0$.
\end{lemma}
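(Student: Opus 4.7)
The plan is to introduce the function $\tau : G \to \bbR$ defined by $\tau(\gam) = p_1(\gam(x)) - p_1(x)$. The first statement of the lemma is exactly the assertion that $\tau(\gam)$ is independent of the choice of $x \in \tM$, while the second is the computation $\ker \tau = G_0$. Once $\tau$ is known to be well-defined, it is automatically a homomorphism, since for any $\gam, \delta \in G$ we have
\[ \tau(\gam\delta) = [p_1(\gam\delta(x)) - p_1(\delta(x))] + [p_1(\delta(x)) - p_1(x)] = \tau(\gam) + \tau(\delta). \]

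To prove the first statement I would first check it on each of the three generators. The maps $\gam_1$ and $\gam_2$ preserve the $t$-coordinate, so $p_1 \circ \gam_i \equiv p_1$ for $i = 1, 2$; the defining relation $\gam_3(Av,t) = (v, t+1)$ yields $\gam_3(v, t) = (A\inv v, t+1)$, so $p_1 \circ \gam_3 - p_1 \equiv 1$. The collection of self-diffeomorphisms $\phi$ of $\tM$ for which $p_1 \circ \phi - p_1$ is constant is a subgroup of $\Diff(\tM)$ (the constants add under composition and negate under inversion), and this subgroup contains the three generators, hence all of $G$. This also identifies $\tau$ as a homomorphism with $\tau(\gam_1) = \tau(\gam_2) = 0$ and $\tau(\gam_3) = 1$.

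For the second statement, the inclusion $G_0 \subset \ker\tau$ is immediate, and the reverse inclusion is where the remaining content lies. I would first verify that $G_0$ is normal in $G$: a short calculation yields $\gam_3\gam_i\gam_3\inv(v,t) = (v + A\inv e_i, t)$ for $i = 1, 2$, and since $A$ has integer entries and determinant $\pm 1$ the vectors $A\inv e_i$ lie in $\bbZ^2$, so these conjugates belong to $G_0$. Normality then allows one to collect every occurrence of $\gam_3^{\pm 1}$ to the right in any word over the generators, producing a normal form $\gam = g\,\gam_3^n$ with $g \in G_0$ and $n \in \bbZ$; applying $\tau$ gives $\tau(\gam) = n$, so $\tau(\gam) = 0$ forces $n = 0$ and hence $\gam \in G_0$. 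I do not foresee any serious obstacle: the whole argument amounts to identifying $\tau$ with the quotient map $G \to G/G_0 \cong \bbZ$ encoded by the height coordinate $p_1$.
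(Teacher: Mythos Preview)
Your proof is correct and is precisely a careful unpacking of the paper's one-line proof, which reads in full: ``This follows from the definitions of $p_1$ and $\gam_i$ ($i=1,2,3$).'' Both arguments reduce to checking the effect of each generator on the $t$-coordinate; you have simply written out the closure-under-composition and kernel computations that the paper leaves to the reader.
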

\begin{proof}
    This follows from the definitions of $p_1$ and $\gam_i$ ($i = 1,2,3$).
\end{proof}
\begin{lemma} \label{plip}
    For any $x,y \in \tM$, one has $|p_1(x) - p_1(y)|  \le  d(x,y)$.
\end{lemma}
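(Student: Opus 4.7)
The plan is to show that $p_1$ is $1$-Lipschitz as a map between Riemannian manifolds by establishing the infinitesimal bound $|dp_1(w)| \le \|w\|$ for every tangent vector $w$, and then integrating this estimate along an arbitrary rectifiable curve joining $x$ to $y$.

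First I would unpack the construction of the metric at the level of tangent spaces. The metric on $\tM$ is characterised by requiring the subbundles $T\As$, $T\Ac$, $T\Au$ to be mutually orthogonal and by normalising $\varphi$ to unit speed, so at each point the tangent space splits as an orthogonal sum $T\As \oplus T\Ac \oplus T\Au$ with $\|\partial_t\| = 1$, where $\partial_t$ generates $\varphi$ and spans $T\Ac$. Writing $w = w_s + w_c + w_u$ in this orthogonal decomposition gives the Pythagorean identity $\|w\|^{2} = \|w_s\|^{2} + \|w_c\|^{2} + \|w_u\|^{2}$.

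Next I would evaluate $dp_1$ on each summand. By definition $p_1(v,t)=t$, so in the standard product coordinates $dp_1 = dt$. The leaves of $\As$ and $\Au$ sit inside the slices $\{t = \text{const}\}$ by the very definitions, hence $dp_1$ vanishes on $T\As$ and $T\Au$. On the center bundle, $\partial_t$ is a unit vector with $dp_1(\partial_t) = 1$, so writing $w_c = c\,\partial_t$ yields $dp_1(w) = dp_1(w_c) = c$ while $\|w_c\| = |c|$. Combining these observations, $|dp_1(w)| = \|w_c\| \le \|w\|$.

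Finally I would conclude by integration. Given any piecewise smooth path $\alpha : [0,1] \to \tM$ from $x$ to $y$,
\[
|p_1(y) - p_1(x)| = \left| \int_0^1 dp_1(\alpha'(s))\, ds \right| \le \int_0^1 \|\alpha'(s)\|\, ds = \length(\alpha),
\]
and taking the infimum over all such paths gives $|p_1(x) - p_1(y)| \le d(x,y)$. No serious obstacle is expected here; the only delicate point is confirming that the orthogonality in the definition of the metric really lets us discard $w_s$ and $w_u$ when computing $dp_1(w)$, which is precisely the content of the construction of the Riemannian metric in the previous paragraph.
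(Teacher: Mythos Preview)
Your proof is correct and follows essentially the same approach as the paper: both arguments use the orthonormal splitting $T\As \oplus T\Ac \oplus T\Au$ to see that $dp_1$ vanishes on the stable and unstable directions and has norm one on the center, yielding the pointwise bound $|dp_1(w)| \le \|w\|$, and then integrate along a curve from $x$ to $y$. The paper phrases this via an arc-length parameterization (so the bound reads $|\tfrac{d}{dt}p_1(\alpha(t))| \le 1$), but this is only a cosmetic difference.
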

\begin{proof}
    Consider a curve $\alpha:[a,b] \to \tM$ from $x$ to $y$ parameterized by arc
    length.  For each point $\alpha(t)$, there is an orthonormal basis
    $\{v^u,v^c,v^s\}$ of the tangent space such that the exterior derivative
    $dp_1$ satisfies $dp_1(v^u) = 0$, $dp_1(v^c) = 1$, and $dp_1(v^s) = 0$.
    This shows that
    \[
        \left| \frac{d}{dt} p_1(\alpha(t)) \right|  \le  1
    \]
    and therefore $|p_1(y) - p_1(x)| \leq b-a$.
\end{proof}
For any $x,y \in \tM$, there is a unique point of intersection
\[
    [x,y] := \Au(x) \cap \Acs(y).
\]
This point depends continuously on $x$ and $y$ and since the foliations are
invariant under $G$,
\[
    [\gam(x), \gam(y)] = \gam([x,y])
\]
for all $\gam \in G$.
Define $\dist_u(x, \Acs(y))$ as the length of the unstable segment between $x$
and $[x,y]$.
Note that
\[
    \dist_u(\varphi_t(x), \Acs(y)) = \lam^t \dist_u(x, \Acs(y))
\]
for all $x$, $y$, and $t$.

\begin{lemma} \label{RR1}
    For $R>0$ there is $R_1>0$ such that
    \[
        \dist(x,\cL) < R \spacearrow \dist_u(x,\cL) < R_1
    \]
    for all $x \in \tM$ and $\cL \in \Acs$.
\end{lemma}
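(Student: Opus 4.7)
The plan is to bound $\dist_u(x,\cL)$ by integrating the Euclidean unstable displacement along a short curve from $x$ to $\cL$, using the explicit form of the Riemannian metric on $\tM$ together with Lemma \ref{plip}.

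First, I would choose a $C^1$ arc-length curve $\alpha=(v,t):[0,L]\to\tM$ with $\alpha(0)=x$, $\alpha(L)\in\cL$, and $L<R$. At each parameter $\tau$ write the unit tangent vector in the metric-orthonormal frame tangent to $\Au,\Ac,\As$ as $\alpha'(\tau)=a(\tau)V^u+b(\tau)V^c+c(\tau)V^s$, so $a^2+b^2+c^2=1$. From the definition of the metric, $V^c=\partial_t$, while the Euclidean $\bbR^2$-projection of the unit vector $V^u$ at height $t(\tau)$ is $\lam^{-t(\tau)}\hat e^u$ (for $\hat e^u$ a unit vector in the unstable eigenspace of $A$), and that of $V^s$ is $\lam^{t(\tau)}\hat e^s$. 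Writing $v(L)-v_x=U\hat e^u+S\hat e^s$, only the $V^u$-component of $\alpha'$ contributes to $U$, giving
\[
U=\pm\int_0^L a(\tau)\,\lam^{-t(\tau)}\,d\tau.
\]
By Lemma \ref{plip}, $|t(\tau)-t_x|\le d(\alpha(\tau),x)\le\tau\le R$, hence $\lam^{-t(\tau)}\le\lam^{R-t_x}$; combined with $|a|\le1$ this yields $|U|\le R\lam^{R-t_x}$.

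Finally, by construction $[x,\alpha(L)]=\Au(x)\cap\Acs(\alpha(L))=\Au(x)\cap\cL$ equals $(v_x+U\hat e^u,t_x)$, and the metric formula for unstable arc length gives $\dist_u(x,\cL)=\lam^{t_x}|U|\le R\lam^R$. Setting $R_1:=R\lam^R$ finishes the proof. The only real subtlety is the conversion between the orthonormal frame $\{V^u,V^c,V^s\}$ defining the metric and the underlying Euclidean coordinates on $\bbR^2$; once this is pinned down, the factor $\lam^{t_x}$ from measuring along the unstable segment cancels the $\lam^{-t_x}$ coming from $\lam^{-t(\tau)}$ at the base point, so any dependence on the height of $x$ is absorbed, and the remaining $\lam^R$ comes entirely from the bounded vertical drift of $\alpha$. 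I therefore expect no serious obstacle beyond this bookkeeping.
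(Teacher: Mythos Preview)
Your proof is correct and gives the explicit constant $R_1 = R\lam^R$. The paper takes a different, softer route: it observes that the function $D(x,y)=\dist_u(x,\Acs(y))$ on the set $\Delta=\{(x,y):d(x,y)\le R\}$ is invariant under the diagonal action of the deck group $G$, and since $\Delta/G$ is compact, $D$ is bounded. Your approach unpacks the Riemannian metric and integrates directly, which has the advantage of yielding an explicit bound and of making transparent why the height $t_x$ drops out (the $\lam^{t_x}$ from measuring the unstable segment cancels the $\lam^{-t_x}$ from converting the orthonormal frame back to Euclidean coordinates). The paper's argument, by contrast, is more robust: it would apply verbatim in any situation where a group of isometries preserving the foliations acts cocompactly, without needing to know the metric explicitly.
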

\begin{proof}
    Define
    \[
        \Delta = \{ (x,y) \in \tM \times \tM : d(x,y)  \le  R \}.
    \]
    It is enough to show that $D:\Delta \to \bbR$ defined by
    $
        D(x,y) = \dist_u(x, \Acs(y))
    $
    is bounded.
    Define an action of $G$ on $\Delta$ by
    $\gam \cdot (x,y) = (\gam(x), \gam(y))$ and note that
    $D$ quotients to a continuous function $D_G:\Delta/G \to \bbR$.
    As $\Delta/G$ is compact, both $D_G$ and $D$ are bounded.

\end{proof}
\begin{lemma} \label{glueA}
    If $\{x_k\}$, $\{x'_k\}$ are sequences in leaves $\cL, \cL' \in \Acs$
    such that\\
    $\sup d(x_k, x_k') < \infty$ and $\lim_{k \to \infty} p_1(x_k) = +\infty$,
    then $\cL = \cL'$.
\end{lemma}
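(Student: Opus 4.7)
The plan is to combine the Hausdorff-distance bound from Lemma \ref{RR1} with the fact that two distinct leaves of $\Acs$ separate exponentially in the unstable direction as the center coordinate $p_1$ grows. These two facts are in direct competition: the hypothesis forces bounded unstable distance between $\cL$ and $\cL'$ along the points $x_k$, while distinctness of the leaves would force exponentially large unstable distance, so the leaves must coincide.

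First I would set $R := \sup_k d(x_k, x_k')$ (finite by hypothesis) and apply Lemma \ref{RR1} to obtain $R_1 > 0$ such that
\[
    \dist_u(x_k, \cL') \;=\; \dist_u\bigl(x_k, \Acs(x_k')\bigr) \;<\; R_1
\]
for every $k$. Next I would compute $\dist_u(x, \cL')$ explicitly for $x \in \cL$. Writing $x = (v,t)$ and choosing any $(v',t) \in \cL'$ at the same level, the bracket $[x,(v',t)]$ lies in $\Au(x) \cap \Acs((v',t))$, so it has the form $(v + s e^u, t)$, where $e^u$ is the unstable eigenvector of $A$ and $s e^u$ is the unstable component of $v' - v$ in the $A$-eigenbasis. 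Because both $\cL$ and $\cL'$ are cosets of the stable eigenspace in the $v$-variable, the scalar $|s|$ depends only on the pair $(\cL,\cL')$, not on the choice of $x \in \cL$; call it $c = c(\cL,\cL') \ge 0$. The length formula $\length(\alpha) = \lam^t \|v_1-v_2\|$ for unstable arcs at level $t$ then yields
\[
    \dist_u(x, \cL') \;=\; c\, \lam^{p_1(x)},
\]
with $c = 0$ exactly when $\cL = \cL'$.

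Finally, if $\cL \ne \cL'$ then $c > 0$, and $p_1(x_k) \to +\infty$ forces $\dist_u(x_k, \cL') = c\,\lam^{p_1(x_k)} \to \infty$, contradicting the uniform bound $R_1$. Hence $\cL = \cL'$. I do not anticipate any real obstacle in this argument; the only point requiring care is the leaf-invariance of $|s|$, which is immediate from the coset description of the $\Acs$-leaves together with $A$-invariance of the stable eigenspace.
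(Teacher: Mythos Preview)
Your proof is correct and follows essentially the same approach as the paper: both apply Lemma \ref{RR1} to bound $\dist_u(x_k,\cL')$ and then exploit the exponential scaling $\dist_u(\varphi_t(x),\cL') = \lam^t \dist_u(x,\cL')$. The only cosmetic difference is that the paper flows the points $x_k$ back to the slice $p_1^{-1}(0)$ (so the unstable distance tends to zero, forcing the parallel lines $\cL\cap p_1^{-1}(0)$ and $\cL'\cap p_1^{-1}(0)$ to coincide), whereas you stay at level $p_1(x_k)$ and argue that the unstable distance would blow up if $c>0$; these are the same computation read in opposite directions.
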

\begin{proof}
    By the last lemma, there is $R_1$ such that $\dist_u(x_k, \cL') < R_1$
    for all $k$.
    Let $t_k = -p_1(x_k)$.
    Then
    \[
        \lim_{k \to \infty} \dist_u(\varphi_{t_k}(x_k), \cL')  \le
        \lim_{k \to \infty} \lam^{t_k} R_1 = 0.
    \]
    This shows that there are points in $\cL \cap p_1 \inv(0)$ which come
    arbitrarily close to $\cL' \cap p_1 \inv(0)$.
    As the definition of $\Acs$ implies that its leaves intersect the
    plane $p_1 \inv(0)$ in parallel lines, the result follows.
\end{proof}
\begin{cor} \label{hdinfty}
    If $\cL, \cL' \in \Acs$ are distinct, then $\HD(\cL, \cL') = \infty$.
\end{cor}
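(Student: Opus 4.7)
The plan is to derive the corollary directly from Lemma \ref{glueA} by contradiction. Suppose for contradiction that $\HD(\cL,\cL') = R < \infty$ with $\cL \neq \cL'$. The key observation is that every leaf of $\Acs$ is unbounded in the $p_1$ direction: indeed, if $(v_0,t_0) \in \cL$, then for any $t \in \bbR$ the point $(v_0,t)$ also lies in $\cL$ (the difference $v_0 - v_0 = 0$ trivially lies in the stable eigenspace), so $p_1$ takes all real values on $\cL$.

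I would then choose a sequence $x_k \in \cL$ with $p_1(x_k) \to +\infty$. By the definition of Hausdorff distance, for each $k$ there exists $x_k' \in \cL'$ with $d(x_k, x_k') \le R+1$, and in particular the distances $d(x_k, x_k')$ form a bounded sequence. This is exactly the hypothesis of Lemma \ref{glueA}, which then forces $\cL = \cL'$, contradicting our assumption.

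There is no real obstacle here; the only subtlety worth spelling out is why $p_1$ is unbounded on $\cL$, which is immediate from the product structure of the leaves of $\Acs$ (a leaf is a stable eigenspace translate times $\bbR$ in the $t$-coordinate). Everything else is an immediate appeal to Lemma \ref{glueA}.
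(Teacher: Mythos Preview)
Your argument is correct and is exactly the intended deduction: the paper states this as an immediate corollary of Lemma~\ref{glueA} without further proof, and your contradiction argument via a sequence $x_k \in \cL$ with $p_1(x_k)\to+\infty$ is the natural way to unpack that implication.
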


A leaf of $\Acs$ splits $\tM$ into two pieces.  We will also need that an
arbitrarily large neighbourhood of a leaf splits $\tM$ into two pieces.

\begin{lemma} \label{twohalves}
    For any $R > 0$ and $\cL \in \Acs$, there is a partition of $\tM$ into three
    pieces $X, Y, Z$ such that
    \begin{itemize}
        \item
        $X$ and $Z$ are open and connected,
        \item
        $\HD(Y, \cL) < \infty$, and
        \item
        $\dist(y, \cL) < R$ implies $y \in Y$.
    \end{itemize}  \end{lemma}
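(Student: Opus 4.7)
The plan is to use a signed unstable displacement from $\cL$ as the partition's defining function, thickening $\cL$ just enough to absorb the metric $R$-neighbourhood via Lemma \ref{RR1}.

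First, I would build a continuous signed unstable distance $\sigma : \tM \to \bbR$ whose zero set is exactly $\cL$. Writing $\cL = (v_0 + V^s) \times \bbR$, where $V^s \subset \bbR^2$ is the stable eigenspace of $A$, and fixing a Euclidean unit vector $v^u$ in the unstable eigenspace, every $x \in \tM$ is uniquely of the form $x = (v_0 + w_s + s\, v^u,\, t)$ with $w_s \in V^s$ and $s, t \in \bbR$. Set
\[
    \sigma(v_0 + w_s + s\, v^u,\, t) := \lam^t s.
\]
Then $|\sigma(x)|$ equals the length of the unstable segment joining $x$ to $\Au(x) \cap \cL$, so $|\sigma(x)| = \dist_u(x, \cL)$, and the sign of $\sigma$ distinguishes the two sides of $\cL$.

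Second, I would invoke Lemma \ref{RR1} to obtain $R_1 > 0$ such that $\dist(x, \cL) < R$ forces $\dist_u(x, \cL) < R_1$, equivalently $|\sigma(x)| < R_1$. The natural candidate partition is
\[
    X := \{\sigma > R_1\}, \qquad Y := \{|\sigma|  \le  R_1\}, \qquad Z := \{\sigma < -R_1\}.
\]
Openness of $X$ and $Z$ is continuity of $\sigma$; the containment $\{\dist(\cdot, \cL) < R\} \subset Y$ is the choice of $R_1$; and since $\cL \subset Y$ while $\dist(y, \cL) \leq \dist_u(y, \cL) = |\sigma(y)| \leq R_1$ on $Y$, we get $\HD(Y, \cL)  \le  R_1 < \infty$.

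The one point needing care is the connectedness of $X$ and $Z$, since in the sol-type model metric of $\tM$ these regions are curved rather than flat half-spaces. I would dispatch this via the global change of coordinates $\Phi : V^s \times \bbR \times \bbR \to \tM$ given by $\Phi(w_s, c, t) = (v_0 + w_s + c\, \lam^{-t}\, v^u,\, t)$, a homeomorphism under which $\sigma \circ \Phi$ is just the projection $(w_s, c, t) \mapsto c$. Hence $X \cong V^s \times (R_1, \infty) \times \bbR$ and $Z \cong V^s \times (-\infty, -R_1) \times \bbR$, both evidently connected. This is what I expect to be the main (but still essentially routine) obstacle: the geometric subtlety of the non-Euclidean metric disappears as soon as one reads $\sigma$ in the product coordinates, and after this every required property of $X, Y, Z$ is immediate from the definitions.
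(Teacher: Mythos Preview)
Your proof is correct and follows essentially the same approach as the paper: the set $Y=\{|\sigma|\le R_1\}$ you define is exactly the paper's $Y=\{y:\dist_u(y,\cL)\le R_1\}$, and the paper likewise takes $X,Z$ to be the two connected components of the complement. The only difference is that you spell out the connectedness argument via the product coordinates $\Phi$, whereas the paper leaves the verification of all three bullet points as ``straightforward.''
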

\begin{proof}
    Let $Y = \{ y \in \tM : \dist_u(y, \cL)  \le  R_1 \}$
    where $R_1$ is given by Lemma \ref{RR1}.
    Let $X,Z$ be the two connected components of $\tM \setminus Y$.
    It is straightforward to verify the desired properties.
\end{proof}
\begin{lemma} \label{polygrow}
    For any $n>0$ the manifold with boundary defined by
    \[
        \tilde V_n := \{ x \in \tM : |p_1(x)|  \le  n \}
    \]
    has polynomial growth of volume.  That is,
    if $B_R(x)$ consists of all endpoints of paths in $\tilde V_n$ starting at
    $x$ and of length at most $R$, then the volume of $B_R(x)$ grows
    polynomially in $R$.
\end{lemma}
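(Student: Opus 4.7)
The plan is to introduce global coordinates on $\tM$ in which the Riemannian metric has an explicit, almost-Euclidean form. Pick unit vectors $e_s$ and $e_u$ in the stable and unstable eigenspaces of $A$, respectively, and write every point of $\tM$ as $(x_s e_s + x_u e_u,\, t)$, giving a diffeomorphism $\tM \cong \bbR^3$ with coordinates $(x_s, t, x_u)$. The four bullet points in Section \ref{Section-SolvAndModel} defining the metric translate directly into
\[ ds^2 \;=\; \lam^{-2t}\, dx_s^2 + dt^2 + \lam^{2t}\, dx_u^2. \]
In particular the Riemannian volume form is $\lam^{-t}\cdot 1\cdot\lam^{t}\,dx_s\,dt\,dx_u = dx_s\,dt\,dx_u$, so volume on $\tM$ agrees with ordinary Lebesgue measure in these coordinates.

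Next, control the coordinate spread along a rectifiable path contained in $\tilde V_n = \{|t| \le n\}$. If $\alpha(\tau) = (x_s(\tau), t(\tau), x_u(\tau))$ is parameterized by arc length, the explicit form of $ds^2$ gives
\[ |\dot t(\tau)| \le 1, \qquad |\dot x_s(\tau)| \le \lam^{t(\tau)} \le \lam^n, \qquad |\dot x_u(\tau)| \le \lam^{-t(\tau)} \le \lam^n, \]
where the last two estimates use $|t(\tau)| \le n$ throughout. Integrating over a path of length at most $R$ starting at $x$ shows that every point of $B_R(x)$ lies in the Euclidean box
\[ \{\, (x_s,t,x_u) : |x_s - x_s(x)| \le \lam^n R,\ |t - t(x)| \le \min(R, 2n),\ |x_u - x_u(x)| \le \lam^n R\,\}, \]
the bound $\min(R,2n)$ in the $t$-direction using that both $|t|$ and $|t(x)|$ are at most $n$.

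Since the Riemannian volume on $\tilde V_n$ coincides with Lebesgue volume in these coordinates, $\Vol(B_R(x))$ is at most the Lebesgue volume of this box, namely $(2\lam^n R)^2\cdot 2\min(R,2n)$. This is $O(R^3)$ always, and in fact $O(R^2)$ once $R \ge 2n$, so the growth is polynomial as claimed. The only substantive point is the cancellation of the $\lam^{\pm t}$ factors in the Riemannian volume form; once this is observed, the remainder is a routine Lebesgue estimate, so I do not anticipate any real obstacle.
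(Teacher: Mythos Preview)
Your proof is correct. The explicit metric $ds^2 = \lam^{-2t}\,dx_s^2 + dt^2 + \lam^{2t}\,dx_u^2$ follows directly from the four defining conditions in Section~\ref{Section-SolvAndModel}, the volume form does collapse to Lebesgue measure, and the coordinate-speed bounds inside $\tilde V_n$ are exactly as you state. The resulting box estimate gives the polynomial bound, indeed $O(R^2)$ for $R \ge 2n$.

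The paper, however, argues differently and more abstractly: it observes that the subgroup $G_0 = \langle \gam_1, \gam_2 \rangle \cong \bbZ^2$ acts by isometries on $\tilde V_n$ with compact quotient $\hat V_n = \tilde V_n / G_0$, and then invokes the general principle (essentially \v{S}varc--Milnor) that the volume growth of a manifold with a cocompact isometric action by a group of polynomial growth is itself polynomial. Your approach is more elementary and yields explicit constants and the precise degree of growth, at the cost of being tied to the particular coordinate description of $\tM$; the paper's approach is shorter and conceptual, and would transfer unchanged to any manifold with a cocompact $\bbZ^2$ action, without needing to write down the metric.
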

\begin{proof}
    The deck transformations in $G_0$ map $\tilde V_n$ to itself and the
    quotient $\hat V_n := \tilde V_n / G_0$ is compact and is therefore a
    finite volume manifold with boundary.  The result then follows from the
    fact that $G_0 \cong \bbZ^2$ has polynomial growth.
\end{proof}

\begin{lemma} \label{disjointgrow}
    If $X$ is a measurable subset of $\tM$ such that $X \cap \gam(X) =
    \varnothing$ for all $\gam \in G_0$ then
    the volume of
    \[
        X_n := \{ x \in X : |p_1(x)| < n \}
    \]
    grows linearly in $n$.
\end{lemma}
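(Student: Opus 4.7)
The plan is to push $X_n$ down to the compact quotient $\tM/G_0$ and bound its volume by the volume of the corresponding slab in the quotient, which I will compute directly.

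First, I would collect the $G_0$-invariance that makes the quotient well-behaved. Since $G_0 \subset G$ acts on $\tM$ by isometries, the projection $\pi:\tM \to \tM/G_0$ is a local isometry. By Lemma \ref{pconst}, $p_1 \circ \gam = p_1$ for every $\gam \in G_0$, so $p_1$ descends to a function on $\tM/G_0$, and $\pi(\tilde V_n) = \{z \in \tM/G_0 : |p_1(z)| \le n\}$. The hypothesis $X \cap \gam(X) = \varnothing$ for all nontrivial $\gam \in G_0$ says precisely that $\pi|_X$ is injective, and therefore
\[
    \volume(X_n) \;=\; \volume(\pi(X_n)) \;\le\; \volume(\pi(\tilde V_n)).
\]

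Second, I would compute $\volume(\pi(\tilde V_n))$ via the explicit metric defined in Section \ref{Section-SolvAndModel}. Since the $\Ac$-direction is unit speed, Fubini reduces this to integrating the Riemannian area of the torus $T_t := \{p_1 = t\}/G_0$ over $t \in [-n,n]$. In the Euclidean $v$-coordinates, the fundamental domain of $G_0$ in the plane $\{p_1 = t\}$ is the unit square $[0,1]^2$. Writing an orthonormal Riemannian frame $\{v^s, v^u\}$ of this plane at level $t$ in terms of the Euclidean eigenvectors $\{e^s, e^u\}$ of $A$, one sees $v^s = \lam^{t}\, e^s$ and $v^u = \lam^{-t}\, e^u$, so the Jacobian passing from the Euclidean area element $dv$ to the Riemannian area element contributes a factor $\lam^t \cdot \lam^{-t} = 1$. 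Hence the Riemannian area of $T_t$ equals a constant $c_A > 0$ independent of $t$, and
\[
    \volume(\pi(\tilde V_n)) \;=\; \int_{-n}^{n} c_A\, dt \;=\; 2 c_A\, n,
\]
which is linear in $n$.

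There is no serious obstacle here: the argument is essentially a volume comparison, and the only point requiring care is the cancellation $\lam^t \cdot \lam^{-t} = 1$ in the area element, which is built into the definition of the metric (orthogonal $\As,\Ac,\Au$ frame with reciprocal stable/unstable scaling factors).
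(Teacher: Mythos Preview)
Your proof is correct and follows essentially the same approach as the paper: project $X_n$ injectively to $\tM/G_0$ and bound by the volume of the slab $\tilde V_n/G_0$. The only minor difference is that the paper computes this slab volume by invoking that $\gamma_3$ is a volume-preserving isometry (so $\volume(\hat V_n) = n \cdot \volume(\hat V_1)$), whereas you compute it directly via Fubini and the cancellation $\lambda^t \cdot \lambda^{-t} = 1$ in the area element.
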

\begin{proof}
    Using that the isometry $\gam_3 \in G$ is volume
    preserving, it is not hard to show that the volume of $\hat V_n$ (as defined
    in the proof of the last lemma) is exactly
    $n$ times the volume of $\hat V_1$.
    Each $X_n \subset \tilde V_n$ projects to a set $\hat X_n \subset \hat V_n$.
    As this projection is one-to-one, the volumes of $X_n$ and $\hat X_n$ are
    the same. The result follows.
\end{proof}

%%%%%%%%%%%%%%%%%%%%%%%%%%%%%%%%%%%%%%%%%%%%%%%%%%%%%%%%%%%%%%%%%%%%%%%%%%

\section{Comparing foliations}\label{Section-ModelCorrect}

In this section, suppose $f_0$ is a partially hyperbolic diffeomorphism
which satisfies the hypotheses of Lemma \ref{lemmadc}. In particular, since it is homotopic to the identity, $f_0$ lifts to a diffeomorphism $f:\tM \to \tM$ which is a finite
distance from the identity.

In this section, the only dynamical system we consider is $f$.
We do not consider the flow $\varphi$ used in the last section.  Thus, the
words stable, unstable, and center here refer only to $f$.

As the hypotheses of Theorem \ref{thmbran} are satisfied, there
are branching foliations $\Fcs$ and $\Fcu$. By Propositions
\ref{propbi72}, \ref{propcompactleaf}, and \ref{propmodel},
the branching foliation $\Fcs$ is
almost parallel to either $\Acs$ or $\Acu$, defined in Section
\ref{Section-SolvAndModel}. Up to replacing $A$ with $A \inv$, we
may freely assume the first.

\begin{assumption}
    $\Fcs$ is almost parallel to $\Acs$ and throughout the rest of the paper $R>0$ is the
    associated constant in the definition.
\end{assumption}

From this assumption it will follow that $\Fcu$ is almost parallel to $\Acu$.
However, the proof of this is surprisingly involved and will occupy the rest of this section.  We must first prove a
number of properties relating $\Fcs$ to $\Acs$.

\begin{lemma} \label{uniqueCL}
    If $L \in \Fcs$, $\cL \in \Acs$, and $\HD(L, \cL) < \infty$, then
    \begin{enumerate}
        \item
        $\cL$ is the unique leaf in $\Acs$, such that $\HD(L, \cL) < \infty$,
        \item
        $\HD(L, \cL) < R$,
        \item
        $\HD(f^k(L), \cL) < R$ for all $k \in \bbZ$, and
        \item
        $\HD(\gam(L), \gam(\cL)) < R$ for all $\gam \in G$.
    \end{enumerate}  \end{lemma}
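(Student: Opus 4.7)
The plan is to establish the four items in order, using Corollary \ref{hdinfty} as the principal rigidity tool: distinct leaves of $\Acs$ have infinite Hausdorff distance, so any finite-distance match with a plane $L$ pins down the corresponding model leaf uniquely. For (1), suppose some $\cL' \in \Acs$ also satisfies $\HD(L, \cL') < \infty$; the triangle inequality gives $\HD(\cL, \cL') \le \HD(\cL, L) + \HD(L, \cL') < \infty$, and Corollary \ref{hdinfty} forces $\cL' = \cL$. Item (2) is then immediate: the standing assumption that $\Fcs$ is almost parallel to $\Acs$ supplies some $\cL'' \in \Acs$ with $\HD(L, \cL'') < R$, and by the uniqueness of (1) this $\cL''$ must coincide with the given $\cL$.

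For (3), recall that $f(L) \in \Fcs$ by Theorem \ref{thmbran}. Since $f$ is a finite distance from the identity, say $d(f(x), x) \le K$ for all $x \in \tM$, we have $\HD(f(L), L) \le K$, hence $\HD(f(L), \cL) < \infty$ by (2). Applying (1) and (2) to the leaf $f(L)$ in place of $L$, its unique matching model leaf must be $\cL$ and $\HD(f(L), \cL) < R$. The same estimate $d(f\inv(x), x) \le K$ shows that $f\inv$ is also at finite distance from the identity, so iteration yields $\HD(f^k(L), \cL) < R$ for every $k \in \bbZ$. Item (4) is even quicker: Theorem \ref{thmbran} gives $\gam(L) \in \Fcs$ and the $G$-invariance of the model foliation gives $\gam(\cL) \in \Acs$; since $\gam$ is an isometry of $\tM$, $\HD(\gam(L), \gam(\cL)) = \HD(L, \cL) < R$ by (2).

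I do not anticipate a genuine obstacle in this lemma. The one structural input being used is that the lift $f$ is at finite distance from the identity, a consequence of $f_0$ being homotopic to the identity via the hypotheses of Lemma \ref{lemmadc}. This is precisely what prevents $f$ from sliding one branching leaf onto a \emph{different} model leaf, and combined with the rigidity of Corollary \ref{hdinfty} and the isometric action of $G$ on $\Acs$, it makes the entire lemma a formal consequence of the almost-parallel hypothesis.
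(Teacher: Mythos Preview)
Your proof is correct and follows essentially the same route as the paper: uniqueness via Corollary~\ref{hdinfty} and the triangle inequality, item~(2) from almost-parallelness plus uniqueness, item~(3) from $f$ being at bounded distance from the identity, and item~(4) from $G$ acting by isometries preserving both foliations. The only difference is that you spell out the iteration in~(3) more carefully than the paper does.
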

\begin{proof}
    The first item follows from Corollary \ref{hdinfty} and the fact that
    Hausdorff distance satisfies the triangle inequality.
    The second item follows from the first item and the definition of almost
    parallel.
    The third item then follows from the fact that $f$ is a bounded distance
    from the identity and therefore $\HD(f(X), X) < \infty$ for any subset
    $X \subset \tM$.
    The fourth item holds because $\gam \in G$ is an isometry
    which preserves both $\Fcs$ and $\Acs$.
\end{proof}
Lemma \ref{uniqueCL} shows there is a canonical function from leaves of $\Fcs$ to
leaves of $\Acs$.  This function is in fact a bijection.

\begin{lemma} \label{uniqueL}
    If $L \in \Fcs$, $\cL \in \Acs$, and $\HD(L, \cL) < \infty$, then
    $L$ is the unique leaf in $\Fcs$ such that $\HD(L, \cL) < \infty$,
\end{lemma}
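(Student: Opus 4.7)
The plan is to argue by contradiction. Suppose $L' \in \Fcs$ is distinct from $L$ and satisfies $\HD(L', \cL) < \infty$. Applying Lemma \ref{uniqueCL} to each of $L$ and $L'$ yields $\HD(L, \cL) < R$ and $\HD(L', \cL) < R$, hence $\HD(L, L') < 2R$. By item (\ref{nocross}) of Theorem \ref{thmbran}, $L'$ lies in $L^+$ or $L^-$; after relabelling assume $L' \subset L^+$. The orientation of $E^u$ from the proof of Proposition \ref{csuint} is coherent across leaves of $\Fcs$, so $L' \subset L^+$ also forces $L \subset (L')^-$. Set $U := L^+ \cap (L')^-$; this is a closed region between $L$ and $L'$ and, since $L \neq L'$ do not topologically cross, its interior is non-empty.

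The first main step is to show $\HD(U, \cL) < \infty$. For any $p \in U$, follow the unstable leaf $\Wu(p)$ in the negative direction. By the orientation convention, this ray starts in $L^+$ and must either remain there or cross $L$; by Proposition \ref{csuint}, any crossing is unique. To bound the arc length to such a crossing, apply Lemma \ref{twohalves} with $R' = R$ to produce a partition $\tM = X \cup Y \cup Z$ with $L \cup L' \subset Y$ and $X, Z$ open, connected, and on opposite sides of $\cL$. Combining the almost-parallelism of $\Fcs$ with $\Acs$ and Lemma \ref{RR1}, unstable rays from $U$ that do not cross $L$ within bounded arc length are forced to exit $Y$ into $Z$ (the $L^+$-side); but a ray in $L^+$ cannot pass back through $L$ to reach $L^-$ more than once, and by orientation this contradicts its being in $\Wu_-(p)$. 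Tracking these possibilities yields a uniform bound $\dist(p, L) \le C$, whence $\HD(U, \cL) \le C + R$.

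The second step uses volume growth to derive a contradiction. By Corollary \ref{hdinfty}, the $G_0$-translates $\gam(\cL)$ are pairwise at infinite Hausdorff distance. Combined with $\HD(U, \cL) < \infty$, this implies that a $G_0$-fundamental sub-region of $U$ is disjoint from all but finitely many of its $G_0$-translates, so Lemma \ref{disjointgrow} bounds $\mathrm{vol}(U \cap \tilde V_n)$ linearly in $n$. On the other hand, $U$ contains an open set of positive unstable thickness between $L$ and $L'$; together with the exponential factor $\lam^{-t}$ in the metric along $\As$ described in Section \ref{Section-SolvAndModel}, the area of $L \cap \tilde V_n$ inside a fundamental domain for $G_0$ grows like $\lam^n$, so $\mathrm{vol}(U \cap \tilde V_n)$ grows super-linearly in $n$. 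This contradicts the linear upper bound and forces $L = L'$.

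I expect the main obstacle to be the bounded-arc-length claim of the first step. A priori, $E^u$ is only continuous and transverse to $E^{cs}$, with no quantitative comparison between Riemannian distance on $\tM$ and arc length along $\Wu$; to rule out a negative unstable ray that spirals indefinitely inside $U \subset Y$ without ever reaching $L$, one has to combine the unique-intersection property of Proposition \ref{csuint} with the global geometric separation provided by Lemma \ref{twohalves}, using that $Y$ is a bounded-width tube around $\cL$ in the unstable direction whose only exits into $L^-$ pass through $L$.
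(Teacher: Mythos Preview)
Your second step contains a genuine gap, and it is where the whole argument fails. You claim that because the Riemannian area of $L\cap\tilde V_n$ inside a $G_0$-fundamental domain grows like $\lam^n$ and $U$ has ``positive unstable thickness'' on some open set, the volume $\mathrm{vol}(U\cap\tilde V_n)$ grows super-linearly. This does not follow. The open set where the thickness is bounded below may sit at a fixed range of heights and contribute only a constant to the volume; nothing forces the thickness to persist as $|t|\to\infty$. More decisively, the Riemannian volume form on $\tM$ equals the Euclidean form $dv\wedge dt$ (the factors $\lam^{t}$ and $\lam^{-t}$ from $\Au$ and $\As$ cancel), so a $G_0$-fundamental domain of $\tilde V_n$ has total volume exactly $2n$. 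Since you have already argued that the interiors of $U$ and $\gam(U)$ are disjoint for $\gam\in G_0\setminus\{\id\}$, Lemma \ref{disjointgrow} gives $\mathrm{vol}(U\cap\tilde V_n)\le 2n$, and no super-linear lower bound is possible from geometry alone. In short, your proof never uses the dynamics of $f$, and that is precisely the missing ingredient.

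The paper's proof supplies it as follows. By Lemma \ref{Lamint} the union $\Lam=\bigcup\{L:\HD(L,\cL)<\infty\}$ has non-empty interior (this is essentially your first step, obtained more cleanly: $L^+\cap(L')^-\subset Y$ directly from Lemma \ref{twohalves}, without chasing unstable rays). The crucial observation is that $\Lam$ is $f$-invariant (Lemma \ref{uniqueCL}). One then takes a short unstable arc $J\subset\interior(\Lam)$ and iterates it forward: $f^k(J)\subset\Lam$, its length grows exponentially, and by Lemma \ref{ucs1} the set $U^{cs}_1(f^k(J))\subset\interior(\Lam)$ has volume growing exponentially in $k$. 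Since $f$ is a bounded distance from the identity, $|p_1|\le Nk+C$ on $f^k(J)$, so this exponentially large volume sits inside $X_{Nk+C}$. That contradicts the linear bound from Lemma \ref{disjointgrow}. Your attempt to replace the forward iteration of an unstable arc by a static ``area $\times$ thickness'' estimate cannot produce the needed growth.
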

To prove this, we first provide two sublemmas.

\begin{lemma} \label{Lamint}
    If for $\cL \in \Acs$ the union
    \[
        \Lam = \bigcup \{ L \in \Fcs : \HD(L, \cL) < \infty \}
    \]
    contains two distinct leaves of $\Fcs$,
    then this union has non-empty interior.
\end{lemma}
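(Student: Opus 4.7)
\emph{Sketch of proof plan for Lemma \ref{Lamint}.}
The strategy is to exhibit an open, non-empty set $B \subset \Lam$ sandwiched between the two given leaves. Concretely, if $L_1, L_2 \in \Fcs$ are the two distinct leaves with $\HD(L_i, \cL) < \infty$, the non-crossing property (item (\ref{nocross}) of Theorem \ref{thmbran}) allows us to orient so that $L_2 \subset L_1^+$, and after relabelling $L_2^\pm$ if necessary also $L_1 \subset L_2^-$. I will take $B := \interior(L_1^+) \cap \interior(L_2^-)$, the open ``between'' region, and show that (a)~$B$ is non-empty, (b)~$B$ is contained in a bounded neighbourhood of $\cL$, and (c)~every leaf of $\Fcs$ passing through $B$ is trapped in that neighbourhood, hence belongs to $\Lam$.

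Non-emptiness of $B$ is immediate: since $L_1 \neq L_2$ are properly embedded $2$-planes, neither contains the other, so there is $x_0 \in L_2 \setminus L_1 \subset \interior(L_1^+)$, and points slightly off $L_2$ on the $L_2^-$ side give elements of $B$. Step (b) is the heart of the argument. Apply Lemma \ref{twohalves} to $\cL$ with a constant exceeding $R$ to obtain a decomposition $\tM = X \sqcup Y \sqcup Z$ with $X, Z$ open connected, $\HD(Y, \cL) < \infty$, and $L_1 \cup L_2 \subset Y$ (the last from Lemma \ref{uniqueCL}(2)). Each open half-space $\interior(L_i^\pm)$ extends infinitely far from $\cL$, so cannot be contained in the bounded slab $Y$; hence the connected components $X, Z$ are forced onto opposite sides of $L_1$ and onto opposite sides of $L_2$. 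Label so that $X \subset \interior(L_1^-)$ and $Z \subset \interior(L_1^+)$. To pin down the orientations for $L_2$ one checks that $\interior(L_1^-) \subset \interior(L_2^-)$: the set $\interior(L_1^-)$ is connected, disjoint from $L_2$ (because $L_2 \subset L_1^+$), and contains points near any $y \in L_1 \setminus L_2 \subset \interior(L_2^-)$. Consequently $X \subset \interior(L_2^-)$ and, being the opposite piece, $Z \subset \interior(L_2^+)$. Thus $(X \cup Z) \cap B = \varnothing$, so $B \subset Y$, and since $L_1 \cup L_2 \subset Y$ too, the whole closed region $L_1^+ \cap L_2^-$ lies in $Y$.

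Step (c) is then short. For any $x \in B$, the last clause of Theorem \ref{thmbran} produces a leaf $L_x \in \Fcs$ with $x \in L_x$. Non-crossing with $L_1$ and $L_2$, together with $x \in \interior(L_1^+) \cap \interior(L_2^-)$, forces $L_x \subset L_1^+ \cap L_2^- \subset Y$, whence $\HD(L_x, \cL) \leq \HD(Y, \cL) < \infty$ and so $L_x \subset \Lam$. This shows $B \subset \Lam$, and since $B$ is open and non-empty, $\Lam$ has non-empty interior. The principal obstacle is the orientation bookkeeping in step (b): one must play the unboundedness of the four half-spaces $\interior(L_i^\pm)$ against the boundedness of $Y$ to decide which ``corner'' $\interior(L_i^\pm) \cap \interior(L_j^\pm)$ contains $X$ and which contains $Z$, so as to confirm that the between-region $B$ really is the corner that gets trapped inside the slab.
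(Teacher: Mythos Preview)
Your approach is essentially the paper's: use Lemma \ref{twohalves} to get a slab $Y$ containing both leaves, then argue that the ``between'' region $L_1^+ \cap L_2^-$ lies in $Y$ and is saturated by $\Fcs$-leaves. The bookkeeping differs only in that you fix the $\pm$ labels from the mutual positions of $L_1, L_2$, whereas the paper fixes them from where $X$ lands.

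There is one genuine gap. In step (b) you assert that ``each open half-space $\interior(L_i^\pm)$ extends infinitely far from $\cL$, so cannot be contained in $Y$,'' and use this to force $X$ and $Z$ onto opposite sides of each $L_i$. You never justify this unboundedness claim, and it is not obvious for an arbitrary properly embedded plane sitting inside the slab. The paper does not try to prove it; instead it simply treats the degenerate case directly: if $X$ and $Z$ both lie in $\interior(L_i^+)$, then $\interior(L_i^-) \subset Y$, this half-space has non-empty interior, and every $\Fcs$-leaf through it lies in $Y$, hence in $\Lam$ --- done. You should insert this one-line case split rather than rely on the unproved unboundedness.

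A minor quibble: the inequality ``$\HD(L_x, \cL) \le \HD(Y, \cL)$'' is not literally valid, since $L_x \subset Y$ only bounds $\sup_{y \in L_x}\dist(y,\cL)$, not the other half of the Hausdorff distance. The conclusion $L_x \subset \Lam$ is still correct: the almost-parallel assumption gives some $\cL' \in \Acs$ with $\HD(L_x,\cL') < R$, and then $L_x \subset Y$ together with Lemma \ref{glueA} forces $\cL' = \cL$. The paper's proof makes the same shortcut when it says ``by Lemma \ref{uniqueCL} \ldots\ if $L \subset Y$ then $L \subset \Lam$.''
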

\begin{figure}[t]
\begin{center}
\includegraphics{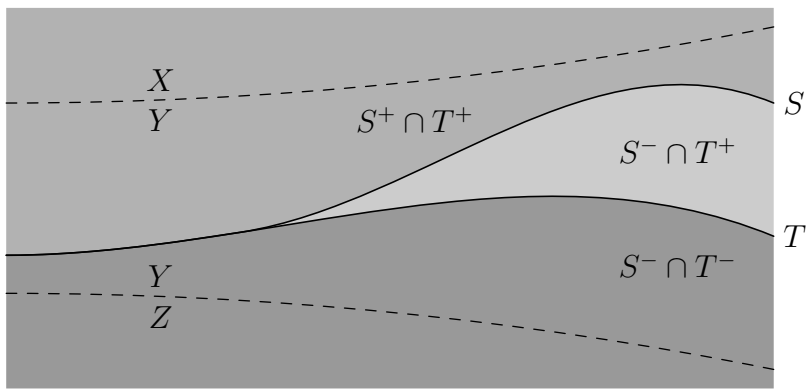}
\end{center}
\caption{A depiction of the half-spaces in the proof of Lemma \ref{Lamint}.
}
\end{figure}
\begin{proof}
    Let $X,Y,Z$ be as in Lemma \ref{twohalves}.
    Note by Lemma \ref{uniqueCL} that if $L \in \Fcs$ is contained in $Y$, then
    $L \subset \Lam$.

    Suppose $S,T \in \Fcs$ are distinct leaves, both contained in $\Lam$.
    As $S$ is a properly embedded surface, it splits $\tM$ into two closed
    half-spaces $S^+$ and $S^-$ such that
    \[
        \partial S^+ = \partial S^- = S^+ \cap S^- = S
        \quad \text{and} \quad
        \interior(S^+) \cup \interior(S^-) = \tM \setminus S.
    \]
    Since leaves of the branching foliation $\Fcs$ do not topologically cross,
    every leaf which intersects the interior of $S^+$ is contained in $S^+$
    and similarly for $S^-$.

    Since $S \subset Y$, $X$ must lie entirely in $S^+$ or $S^-$.
    Assume $X \subset S^+$.
    If $Z$ is also a subset of $S^+$, then $S^-$ would be contained in $Y$ and
    any leaf of $\Fcs$ which intersected the interior of $S^-$ would be in
    $\Lam$.
    As $S^-$ has non-empty interior, this would be enough to complete the
    proof.
    Hence, we may freely assume $Z$ is not a subset of $S^+$ and is instead
    a subset of $S^-$.
    Similarly, define $T^+$ and $T^-$ such that $X \subset T^+$ and $Z \subset
    T^-$.

    As $S  \ne  T$, there is $y \in T \setminus S$.
    Further, there is an open neighborhood $U$ containing $y$ which lies
    entirely in the interior of either $S^-$ or $S^+$ and intersects the
    interiors of both $T^+$ and $T^-$.
    This means that at least one of $S^+ \cap T^-$ and $S^- \cap T^+$
    has non-empty interior.
    As both of these intersections are unions of leaves of $\Fcs$ which lie
    entirely in $Y$, this shows that $\Lam$ has non-empty interior.
\end{proof}
For an unstable arc $J$, define $U^{cs}_1(J)$ as all points $x \in \tM$ such that
$\dist(x, J) < 1$ and every leaf of $\Fcs$ which contains $x$ also intersects
$J$.

\begin{lemma}\label{ucs1}
    There is $C > 0$ such that
    \[
        \volume(\interior(U^{cs}_1(J)))  \ge  C \length(J)
    \]
    for all unstable arcs $J$ of length at least one.
\end{lemma}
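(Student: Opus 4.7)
The plan is to exhibit, inside $U^{cs}_1(J)$, a uniform volume contribution per unit of unstable arc length, by combining a local product box argument for the branching foliation $\Fcs$ with a covering estimate.

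\textbf{Step 1 (Uniform local box).} I will show that there exist constants $\delta_0, v_0 > 0$, independent of the arc, such that for any $y \in \tM$ and any unstable sub-arc $I \subset \Wu(y)$ of length $\delta_0$ centered at $y$, the set
\[
V(I) := \{\, z \in \tM : \dist(z,y) < \delta_0/4 \text{ and every } L \in \Fcs \text{ through } z \text{ meets } I\,\}
\]
has interior volume at least $v_0$. Three ingredients, all uniform on the compact quotient $M$, make this work: transversality of $\Eu$ with $\Ecs$; the fact that every cs-leaf of $\Fcs$ contains a $C^1$-embedded plaque of uniform size tangent to the continuous subbundle $\Ecs$, as guaranteed by the construction in \cite{BI}; and Proposition \ref{csuint}, which says that each cs-leaf meets $\Wu(y)$ in at most one point. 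If $\dist(z,y) < \delta_0/4$, any cs-plaque through $z$ is a small $C^1$-disk tangent to $\Ecs$, so by uniform transversality it meets $\Wu(y)$ in a unique point, which lies within $\delta_0/2$ of $y$ and hence inside $I$.

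\textbf{Step 2 (Covering estimate).} Given an unstable arc $J$ with $\length(J) \ge 1$, pick points $y_1,\dots,y_n$ along $J$ with consecutive unstable spacing $\delta_0/2$, leaving a defect of at most $\delta_0$ near each endpoint; since $\length(J) \ge 1$, this yields $n \ge \length(J)/(2\delta_0)$ for $\delta_0$ small. Let $I_i \subset J$ be the unstable sub-arc of length $\delta_0$ centered at $y_i$. By Step 1, each $V(I_i)$ has interior volume at least $v_0$, and $V(I_i) \subset \interior(U^{cs}_1(J))$ once $\delta_0 < 1$. The multiplicity of the family $\{V(I_i)\}$ is uniformly bounded by some $K$: if $V(I_i) \cap V(I_j) \ne \varnothing$, then $\dist(y_i,y_j) < \delta_0/2$, and for $\delta_0$ small enough that unstable arc length is comparable to ambient distance at scale $\delta_0$, only uniformly many indices $j$ can satisfy this for each $i$. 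Summing,
\[
K \cdot \volume(\interior(U^{cs}_1(J))) \ge \sum_{i=1}^n \volume(\interior(V(I_i))) \ge n v_0 \ge \frac{v_0}{2\delta_0}\length(J),
\]
so the claim holds with $C = v_0/(2K\delta_0)$.

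The main obstacle is Step 1, since $\Fcs$ is a branching rather than an honest foliation: the definition of $U^{cs}_1(J)$ demands that \emph{every} cs-leaf through $z$ meet $J$, not merely some branch. This is handled by the $C^1$-control on plaques coming from \cite{BI}: the family of all cs-plaques through points in a fixed compact set is precompact in the $C^1$-topology, so a single $\delta_0$ controls the transverse intersection with $\Wu(y)$ for every branch at every base point, by a uniform implicit function argument.
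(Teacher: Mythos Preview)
Your argument is correct, but the paper's proof is shorter because it exploits Proposition~\ref{csuint} globally rather than just locally. Both proofs start with the same local observation: uniform transversality of $\Ecs$ and $\Eu$ guarantees a uniform lower bound on $\volume(\interior(U^{cs}_1(J')))$ for any unstable arc $J'$ of length at least one. The difference is in how the local contributions are summed. You cover $J$ by overlapping sub-arcs $I_i$ of length $\delta_0$ and then bound the multiplicity of the family $\{V(I_i)\}$ by a metric comparison between unstable arc length and ambient distance. The paper instead subdivides $J$ into \emph{disjoint} unit-length sub-arcs $J_1,\dots,J_n$ and observes that the sets $U^{cs}_1(J_i)$ are already \emph{pairwise disjoint}: if some point $x$ lay in $U^{cs}_1(J_i)\cap U^{cs}_1(J_j)$, any leaf $L\in\Fcs$ through $x$ would meet the single unstable arc $J$ in two distinct points (one in $J_i$, one in $J_j$), contradicting Proposition~\ref{csuint}. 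This eliminates the multiplicity bookkeeping entirely. In short, you use Proposition~\ref{csuint} only for local uniqueness of the intersection, whereas the paper uses it once more to force disjointness of the pieces, which streamlines the estimate.
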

This is a slight modification of Lemma 3.3 of \cite{BBI2}.  For completeness,
we give a short proof.
\begin{proof}
    As the distributions $\Ecs$ and $\Eu$ are uniformly continuous and uniformly
    transverse, there is $\delta > 0$ such that the volume of the interior of
    $U^{cs}_1(J)$ is greater than $\delta$ for any unstable curve $J$ of length
    at least one.
    If $\length(J) > n$, there are $n$
    disjoint subcurves $J_1, \ldots, J_n \subset J$ of length one.
    If $U^{cs}_1(J_i)$ intersected $U^{cs}_1(J_j)$ at a point $x$, then
    any leaf of $\Fcs$ through $x$ would intersect $J$ at distinct points in
    $J_i$ and $J_j$ contradicting Proposition \ref{csuint}.
\end{proof}
\begin{proof}
    [Proof of Lemma \ref{uniqueL}.]
    If the claim is false, then by Lemma \ref{Lamint}, there is a leaf $\cL \in
    \Acs$ such that the corresponding union of leaves $\Lam$ has non-empty
    interior.

    If $\gam \in G_0$ and the interiors of $\Lam$ and $\gam(\Lam)$
    intersected, there would be a leaf $L \in \Fcs$ such that both
    $\HD(L, \cL) < \infty$ and $\HD(L, \gam(\cL)) < \infty$ and therefore
    $\cL = \gam(\cL)$ by Lemma \ref{uniqueCL}.
    One can check from the definitions of $G_0$ and $\Acs$ in the previous
    section that there is no such leaf with this property.
    Hence, by Lemma \ref{disjointgrow}, the volume of
    \[
        X_n := \{ x \in \interior(\Lam) : |p_1(x)| < n \}
    \]
    grows linearly in $n$.

    By Lemma \ref{uniqueCL}, we have $f(\Lam)=\Lam.$
    Take an unstable arc $J$ in the interior of $\Lam$.  As $f$ is a finite
    distance from the identity, there is $N$ such that for all $k \ge 1$,
    $f^k(J)$ is contained in a ball of radius $Nk$ and therefore
    $U^{cs}_1(f^k(J))$ is contained in a ball of radius $Nk+1$.
    By Lemma \ref{plip}, there is $C > 0$ such that $|p_1(x)| < Nk + C$
    for all $x \in U^{cs}_1(f^k(J))$.

    In other words, $\interior(U^{cs}_1(f^k(J))) \subset X_{Nk+C}$ for all $k \ge 1$.
    Since the length of $f^k(J)$ grows exponentially fast in $k$, and the
    volume of $X_{Nk+C}$ grows only linearly, Lemma \ref{ucs1} gives a
    contradiction.
\end{proof}

This leads to the following consequence which is of critical importance to the
overall proof.

\begin{cor} \label{fixleaf}
    If $L \in \Fcs$, then $f(L) = L$.
\end{cor}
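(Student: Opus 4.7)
The plan is to combine the two uniqueness results (Lemmas \ref{uniqueCL} and \ref{uniqueL}) in a direct way. Together they establish a bijection between leaves of $\Fcs$ and leaves of $\Acs$ given by ``finite Hausdorff distance,'' and the corollary is essentially the statement that $f$ permutes leaves of $\Fcs$ via a map that must fix each one.

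Concretely, I would proceed as follows. Fix $L \in \Fcs$. Since $\Fcs$ is almost parallel to $\Acs$ with constant $R$, there exists $\cL \in \Acs$ with $\HD(L,\cL) < R < \infty$. By item (2) of Theorem \ref{thmbran}, $f(L) \in \Fcs$, and by item (3) of Lemma \ref{uniqueCL}, $\HD(f(L), \cL) < R < \infty$ as well. Thus both $L$ and $f(L)$ are leaves of $\Fcs$ at finite Hausdorff distance from the same $\cL \in \Acs$. Lemma \ref{uniqueL} asserts that such a leaf is unique, so $f(L) = L$.

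I do not expect any real obstacle here; all the work has been done in the preceding lemmas. The substantive step was Lemma \ref{uniqueL}, whose proof required the growth-rate dichotomy coming from the volume estimate in Lemma \ref{disjointgrow} and the exponential expansion of unstable arcs combined with Lemma \ref{ucs1}. Once leaves of $\Fcs$ are in bijective correspondence with leaves of $\Acs$, the $f$-invariance of $\Fcs$ together with the fact that $f$ is a finite distance from the identity (so $\HD(f(L), L) < \infty$ and hence $f(L)$ and $L$ shadow the same model leaf) forces $f(L) = L$ directly.
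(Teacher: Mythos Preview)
Your proof is correct and is exactly the argument the paper intends: it simply cites Lemmas \ref{uniqueCL} and \ref{uniqueL}, and you have spelled out precisely how to combine them.
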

\begin{proof}
    This follows from Lemmas \ref{uniqueL} and \ref{uniqueCL}.
\end{proof}

Now knowing that leaves of $\Fcs$ are fixed by $f$, we now proceed to prove
that $\Fcu$ is almost parallel to $\Acu$.
%Once the fact that leaves of the branching foliations are fixed is established
%we proceed to show that the foliations are close to the correct ones which is
%the main result of this section.

\begin{lemma} \label{glueF}
    If  $\{z_k\}$, $\{z'_k\}$ are sequences in leaves $L, L' \in \Fcs$
    such that\\
    $\sup d(z_k, z'_k) < \infty$
    and
    $\lim_{k \to \infty} p_1(z_k) = +\infty,$
    then $L = L'$.
\end{lemma}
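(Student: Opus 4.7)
The plan is to reduce this statement about the branching foliation $\Fcs$ to the analogous statement for the model foliation $\Acs$ already established in Lemma \ref{glueA}. The bridge between the two is the standing almost-parallelism assumption together with the uniqueness results in Lemmas \ref{uniqueCL} and \ref{uniqueL}.

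First I would use almost-parallelism to select leaves $\cL, \cL' \in \Acs$ with $\HD(L,\cL) < R$ and $\HD(L',\cL') < R$; by Lemma \ref{uniqueCL} these are the unique model leaves attached to $L$ and $L'$ respectively. For each $k$ I would then pick $y_k \in \cL$ and $y'_k \in \cL'$ with $d(z_k, y_k) < R$ and $d(z'_k, y'_k) < R$. The triangle inequality immediately gives
\[
d(y_k, y'_k) \le d(y_k, z_k) + d(z_k, z'_k) + d(z'_k, y'_k) < 2R + \sup_k d(z_k, z'_k) < \infty.
\]
Moreover, Lemma \ref{plip} yields $|p_1(y_k) - p_1(z_k)| \le d(y_k, z_k) < R$, so the assumption $p_1(z_k) \to +\infty$ transfers to $p_1(y_k) \to +\infty$.

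At this stage the sequences $\{y_k\}$ and $\{y'_k\}$ satisfy exactly the hypotheses of Lemma \ref{glueA} for the model leaves $\cL, \cL'$, which forces $\cL = \cL'$. Applying Lemma \ref{uniqueL} to the leaves $L$ and $L'$, both of which are now at finite Hausdorff distance from the same model leaf $\cL = \cL'$, concludes that $L = L'$.

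I do not expect any real obstruction here: the whole argument is an exercise in transferring a geometric fact back and forth between $\Fcs$ and $\Acs$ using quantities that are controlled up to the uniform constant $R$. The only point requiring any attention is checking that the divergence $p_1(z_k) \to +\infty$ passes to $p_1(y_k) \to +\infty$, and this is immediate from the $1$-Lipschitz bound on $p_1$ given by Lemma \ref{plip}.
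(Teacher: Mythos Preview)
Your proposal is correct and follows essentially the same route as the paper's proof: pick model leaves $\cL,\cL'\in\Acs$ at Hausdorff distance less than $R$ from $L,L'$, push the sequences to $\cL,\cL'$, apply Lemma~\ref{glueA} to get $\cL=\cL'$, and then invoke Lemma~\ref{uniqueL} to conclude $L=L'$. The paper's write-up is terser (it does not spell out the use of Lemma~\ref{plip} to transfer the divergence of $p_1$), but the argument is the same.
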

\begin{proof}
    Let $\cL, \cL' \in \Acs$ be such that $\HD(L, \cL) < R$ and $\HD(L', \cL') < R$.
    Then, there are sequences $\{x_k\}$ and $\{x'_k\}$ in $\cL$ and $\cL'$
    such that
    $d(x_k, z_k) < R$  and $d(x'_k, z'_k) < R$
    for all $k$.
    By Lemma \ref{glueA} and the triangle inequality, $\cL=\cL'$.  By Lemma
    \ref{uniqueL}, $L=L'$.
\end{proof}
\begin{lemma} \label{rightbound}
    There is $K>0$ such that
    \[
        p_1(f^{-n}(x)) - p_1(x) < K
    \]
    for all $x \in \tM$ and $n  \ge  0$.
\end{lemma}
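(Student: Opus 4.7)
The plan is to prove the lemma by contradiction, drawing on three facts: each leaf of $\Fcs$ is fixed by $f$ (Corollary \ref{fixleaf}); any two leaves of $\Fcs$ which come close at $p_1 = +\infty$ must coincide (Lemma \ref{glueF}); and $f$ commutes with every deck transformation $\gam \in G$, since $f$ is homotopic to the identity and hence $f_\ast = \mathrm{id}$ on $\pi_1(M_A)$.

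Suppose the conclusion fails: there exist $x_k \in \tM$ and $n_k \ge 0$ with $\delta_k := p_1(f^{-n_k}(x_k)) - p_1(x_k) \to +\infty$. Set $y_k := f^{-n_k}(x_k)$. Using Lemma \ref{pconst}, $\delta_k$ is $G$-invariant in $x_k$: the integer shift that $\gam_3$ induces on $p_1$ appears symmetrically in $p_1(y_k)$ and $p_1(x_k)$ and cancels, while $G_0$ preserves $p_1$. So one may translate so that $x_k$ lies in a fixed compact fundamental domain of $\tM/G$; after passing to a subsequence, $x_k \to x^*$ and, since $p_1(x_k)$ is bounded, $p_1(y_k) \to +\infty$. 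By Corollary \ref{fixleaf}, $x_k$ and $y_k$ lie on a common leaf $L_k \in \Fcs$, within Hausdorff distance $R$ of a unique $\cL_k \in \Acs$ (Lemma \ref{uniqueCL}); after further subsequences, $L_k \to L^* \in \Fcs$ locally and $\cL_k \to \cL^*$.

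To derive a contradiction, translate $y_k$ back down to bounded height: let $m_k := \lfloor p_1(y_k) \rfloor \to \infty$ and $y_k' := \gam_3^{-m_k}(y_k)$, which has $p_1 \in [0,1)$. After an additional $G_0$-translation and subsequence, $y_k' \to y^*$, and the translated leaves $\gam_3^{-m_k}(L_k) \in \Fcs$ converge to some $L^\sharp \in \Fcs$ through $y^*$, within $R$ of $\cL^\sharp := \lim \gam_3^{-m_k}(\cL_k)$. Since $\gam_3^{m_k}$ is an isometry, $d(y_k, \gam_3^{m_k}(y^*)) = d(y_k', y^*) \to 0$, while both $p_1(y_k)$ and $p_1(\gam_3^{m_k}(y^*)) = m_k + p_1(y^*)$ tend to $+\infty$. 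Lemma \ref{glueF} (possibly with a minor strengthening to handle the varying leaves $L_k$ in place of a fixed one) applied to these bounded-distance sequences would then force $L_k = \gam_3^{m_k}(L^\sharp)$ for large $k$, and hence $\cL_k = \gam_3^{m_k}(\cL^\sharp)$ by Lemma \ref{uniqueL}. A direct computation from $\gam_3((Av,t)) = (v,t+1)$ shows that $\gam_3$ acts on the space of $\Acs$-leaves as multiplication by the unstable eigenvalue $\lam^{-1}$ on the unstable eigencoordinate, so no $\Acs$-leaf other than the one through the origin is fixed by a nonzero power of $\gam_3$; this yields the required contradiction (after a harmless initial translation if $\cL^\sharp$ happens to be that exceptional leaf).

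The main obstacle I anticipate is precisely the invocation of Lemma \ref{glueF}: it is stated for two fixed leaves rather than for sequences $L_k$, $\gam_3^{m_k}(L^\sharp)$ that vary with $k$. The resolution will be either to verify that the proof of Lemma \ref{glueF} extends to this setting, or to replace the varying leaves by approximating sequences in a single pair of fixed leaves $L^*$ and $\gam_3^{m}(L^\sharp)$ for a suitable $m$, using that $p_1$ is surjective on each leaf of $\Acs$ and hence (by almost parallelism) on each leaf of $\Fcs$ to supply points at any prescribed height. Lemma \ref{plip} together with the bounded distance of $f$ from the identity will be the essential tool for controlling the distances in this bookkeeping.
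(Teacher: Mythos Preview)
Your setup matches the paper's (contradiction, normalization of $x_k$ into a fundamental domain, $p_1(y_k)\to+\infty$, and the observation via Corollary \ref{fixleaf} that $x_k$ and $y_k$ share a leaf), but the endgame diverges and the gap you flag is real, not cosmetic. The paper does \emph{not} translate $y_k$ back down by powers of $\gam_3$; instead it brings in the unstable direction to manufacture sequences on \emph{fixed} leaves. Concretely: take $w_k\in\Wu(x_k)$ at unstable distance exactly $1$, so $w_k\to w\ne x^*$. Since $f^{-1}$ contracts $\Wu$, the images $\hat w_k:=f^{-n_k}(w_k)$ stay a bounded distance from $y_k=f^{-n_k}(x_k)$. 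Now choose a \emph{fixed} leaf $L\in\Fcs$ that meets the unstable arc from $x^*$ to $w$ at an interior point; it splits $\tM$ into half-spaces with $x_k$ on one side and $w_k$ on the other for large $k$. Because $f$ fixes every $\Fcs$-leaf and leaves do not topologically cross, $y_k$ and $\hat w_k$ remain on opposite sides of $L$, so any short path between them meets $L$ in a point $z_k$ with $\sup_k d(z_k,y_k)<\infty$ and $p_1(z_k)\to+\infty$. Repeat with a second fixed leaf $L'\ne L$ through the same unstable arc to get $z_k'\in L'$ with the same properties. Now Lemma \ref{glueF} applies verbatim to the fixed pair $L,L'$, forcing $L=L'$ and contradicting Proposition \ref{csuint}.

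Your route, by contrast, never produces two distinct fixed leaves to feed into Lemma \ref{glueF}. The proposed ``minor strengthening'' to varying leaves $L_k$ and $\gam_3^{m_k}(L^\sharp)$ does not yield equality: tracing through Lemma \ref{glueA}, flowing down to $p_1=0$ only gives that the corresponding $\Acs$-leaves satisfy $\dist_u(\cL_k,\gam_3^{m_k}(\cL^\sharp))\to 0$, which is already immediate from your construction and carries no information. Even if one grants $\cL_k=\gam_3^{m_k}(\cL^\sharp)$, nothing forces two of the $\cL_k$ to coincide, so no leaf is fixed by a nonzero power of $\gam_3$; your concluding sentence does not follow. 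Ultimately, the only fact you extract from the contradiction hypothesis is that the leaf $L_k$ contains points both at bounded height and at height $\to+\infty$, but every $\Fcs$-leaf has this property. The unstable direction is the missing ingredient that converts ``$y_k$ is far up'' into ``two distinct fixed $\Fcs$-leaves each contain points near $y_k$.''
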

\begin{figure}[t]
\begin{center}
\includegraphics{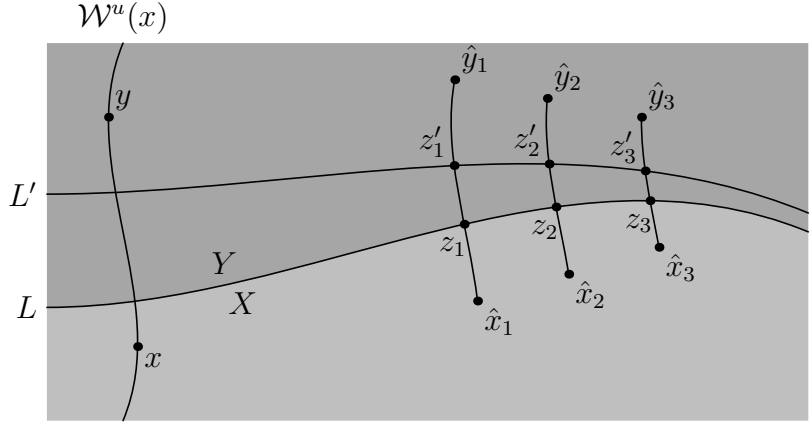}
\end{center}
\caption{A depiction of the sequences in the proof of Lemma \ref{rightbound}.
}
\end{figure}
\begin{proof}
    Suppose the statement is false.
    Then there are sequences $\{x_k\}$, $\{\hat x_k\}$ in $\tM$ and
    $\{n_k\}$ in $\bbN$ such that $\hat x_k = f^{-n_k}(x_k)$ for all $k$
    and $\lim_k p_1(\hat x_k) - p_1(x_k) = + \infty$.
    By Lemma \ref{pconst}, one may freely assume that ${x_k}$ lies within a
    compact fundamental domain.  By taking a subsequence, further assume that
    $x_k$ converges to a point $x$.
    Then,
    $\lim_k p_1(\hat x_k) = +\infty$.

    Define a convergent sequence $\{y_k\}$ such that each
    $y_k$ is connected to $x_k$ by an unstable segment of length exactly one.
    Then $y := \lim_k y_k$ is on the unstable leaf of $x$ and is distinct from
    $x$.
    Define $\hat y_k := f^{-n_k} (y_k)$.
    As $f \inv$ contracts unstable leaves,
    $\sup d(\hat x_k, \hat y_k) < \infty$.

    Choose a leaf $L \in \Fcs$ which intersects $\Wu(x)$ at a point strictly
    between $x$ and $y$.
    This leaf splits $\tM$ into closed half-spaces $X$ and $Y$
    such that $\partial X = \partial Y = X \cap Y = L$
    and $x \in \interior(X)$ and $y \in \interior(Y)$.

    For large $k$, $x_k \in \interior(X)$.
    By Lemma \ref{fixleaf},
    $\hat x_k$ is on the same leaf as $x_k$ and as leaves
    do not cross topologically, $\hat x_k \in X$.
    Similarly, $\hat y_k \in Y$ for large $k$.
    By taking subsequences, assume ``large $k$'' is all $k \ge 1$.

    As any path connecting $\hat x_k$ to $\hat y_k$ must intersect $L$,
    there is a sequence $\{z_k\}$ in $L$ such that
    \[
        \sup d(\hat x_k, z_k)  \le  \sup d(\hat x_k, \hat y_k) < \infty
    \]
    which also implies $\lim p_1(z_k) = +\infty$.
    Now choose a leaf $L' \in \Fcs$ which intersects the unstable segment
    between $x$ and $y$ at a point distinct from $L$.
    By the same construction, there is a sequence $\{z'_k\}$ in $L'$
    such that
    $\sup d(\hat x_k, z'_k) < \infty$
    and therefore
    $\sup d(z_k, z'_k) < \infty.$
    By Lemma \ref{glueF}, $L$ and $L'$ are the same leaf, which by Proposition
    \ref{csuint} gives a contradiction.
\end{proof}

We now have the tools necessary to compare $\Fcu$ and $\Acu$.
%Now it is possible to show that the branching foliation $\Fcu$ is almost parallel to the correct foliation.

\begin{prop}\label{Prop-AlmostParallelToCorrect}
    The branching foliation $\Fcu$ is almost parallel to $\Acu$.
\end{prop}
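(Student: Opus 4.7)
The plan is to argue by contradiction. By Propositions \ref{propbi72}, \ref{propcompactleaf}, and \ref{propmodel}, the lifted foliation $\Fcu$ must be almost parallel to one of the two model foliations $\Acs$ or $\Acu$; it therefore suffices to rule out the first possibility, so assume for contradiction that $\Fcu$ is almost parallel to $\Acs$.

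Under this hypothesis the entire chain of results already established for $(\Fcs,\Acs)$---namely Lemmas \ref{uniqueCL}, \ref{Lamint}, \ref{uniqueL}, Corollary \ref{fixleaf}, and Lemmas \ref{glueF} and \ref{rightbound}---applies verbatim to $(\Fcu,\Acs)$ after interchanging throughout the roles of stable and unstable and of $f$ and $f\inv$. The input is the analogue of Proposition \ref{csuint}, which states that each leaf of $\Fcu$ meets each stable leaf in at most one point and follows by the same orientation argument applied to $\Es$. The analogue of Lemma \ref{uniqueL} is obtained using a stable arc $J$ and the iterates $f^{-k}(J)$, whose length grows exponentially because $f\inv$ expands $\Es$, while the analogue of Lemma \ref{rightbound} is proved by picking $y_k$ on the stable leaf of $x_k$ and setting $\hat x_k = f^{n_k}(x_k)$, using the contraction of $\Es$ by $f$. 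The resulting conclusions are that every leaf of $\Fcu$ is fixed by $f$ and that there exists $K' > 0$ with
\[
p_1(f^n(x)) - p_1(x) < K' \qquad \text{for every } x \in \tM \text{ and every } n  \ge  0.
\]
Combined with Lemma \ref{rightbound} and the obvious variant obtained by substituting $x \mapsto f^{-n}(x)$, this produces a uniform $K_0 > 0$ with $|p_1(f^n(x)) - p_1(x)|  \le  K_0$ for every $x \in \tM$ and every $n \in \ZZ$; in particular, every $f$-orbit is trapped in a fixed $p_1$-slab.

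The final step is a volume-growth contradiction. After replacing $f$ by a suitable iterate, fix $\mu > 1$ with $\|Df|_{\Eu}\|  \ge  \mu$ uniformly and pick a unit-length unstable arc $J$ through some $x \in \tM$. Because each point of $f^n(J)$ is of the form $f^n(w)$ with $w \in J$, the two-sided $p_1$-bound places $f^n(J)$, and hence $U^{cs}_1(f^n(J))$, inside a slab $\tilde V_N$ with $N$ independent of $n$. Since $f$ is a bounded distance from the identity---say $d(f(y),y)  \le  C_0$ for all $y$---the ambient diameter of $U^{cs}_1(f^n(J))$ is at most $2nC_0 + 3$. Because $G_0 \cong \ZZ^2$ acts cocompactly on $\tilde V_N$ and the intrinsic length in the slab of the translation by $\gamma \in G_0$ is comparable to the Euclidean norm of $\gamma$ (the scaling factors $\lam^{\pm t}$ are uniformly bounded in the fixed slab), the number of $\gamma \in G_0$ with $\gamma(U^{cs}_1(f^n(J))) \cap U^{cs}_1(f^n(J)) \ne \varnothing$ is at most $O(n^2)$. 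Projecting to the compact quotient $\hat V_N = \tilde V_N / G_0$ therefore yields
\[
\Vol\bigl(U^{cs}_1(f^n(J))\bigr) \;\le\; O(n^2) \cdot \Vol(\hat V_N),
\]
a polynomial upper bound in $n$. On the other hand, Lemma \ref{ucs1} forces $\Vol(U^{cs}_1(f^n(J)))  \ge  C \mu^n$, and this exponential lower bound contradicts the polynomial upper bound for $n$ large enough, completing the proof.

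The main obstacle I anticipate is the bookkeeping required in paragraph two: one must check that the entire chain of previously established lemmas is genuinely symmetric under the interchange $(\Fcs,\Eu,f) \leftrightarrow (\Fcu,\Es,f\inv)$, that none of them implicitly depends on the choice of $\Fcs$ (rather than $\Fcu$) being the foliation compared to $\Acs$, and that the two-sided bound on $p_1(f^n(x)) - p_1(x)$ really does follow from combining the two one-sided bounds. Once these symmetric transfers are in place, the volume-counting step in paragraph three is essentially immediate: unstable arcs grow exponentially under $f$, while a bounded-diameter subset of a fixed slab can carry only polynomial volume once one passes to the compact quotient by $G_0$.
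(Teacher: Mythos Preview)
Your proposal is correct and follows essentially the same approach as the paper: the paper phrases the symmetry step as ``$\Fcu_f=\Fcs_g$ for $g=f\inv$, so rerun the section with $g$ in place of $f$,'' obtains the two-sided bound $|p_1(f^n(x))-p_1(x)|<K$, and then derives the same exponential-vs-polynomial volume contradiction using an unstable arc inside a fixed slab $\tilde V_N$ together with Lemma \ref{ucs1}. Your final paragraph reproves Lemma \ref{polygrow} by hand rather than citing it, but the substance is identical.
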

\begin{proof}
    $\Fcu$ is almost parallel either to $\Acu$ or $\Acs$.
    Assume the latter in order to derive a contradiction.
    Notice that a center-unstable branching foliation for a partially
    hyperbolic diffeomorphism is a center-stable branching foliation for
    its inverse.  That is, if $g = f \inv$, then $\Fcu_f = \Fcs_g$ and so the
    assumption is that $\Fcs_g$ is almost parallel to $\Acs$.
    Repeating all of the results of this section with $g$ in place of $f$, one
    proves a version of Lemma \ref{rightbound} for $g${:}
    \begin{quote}
        \em There is $K>0$ such that
        \[
            p_1(f^n(x)) - p_1(x) = p_1(g^{-n}(x)) - p_1(x) < K
        \]
        for all $x \in \tM$ and $n  \ge  0$.
    \end{quote}
    Taken with the original Lemma \ref{rightbound} for $f$, this implies that
    there is $K$ such that
    \[
        |p_1(f^n(x)) - p_1(x)| < K
    \]
    for all $x \in \tM$ and $n \in \bbZ$.

    Take an unstable curve $J$ such that $J \subset \tilde V_1$ in the
    notation of Lemma \ref{polygrow}.  The above inequality implies that
    $f^n(J) \subset \tilde V_{K+1}$ for all $n$ and therefore
    $U^{cs}_1(f^n(J)) \subset \tilde V_{K+2}$ (where $U^{cs}_1$ is defined just
    before Lemma \ref{ucs1}).
    Arguing as in the proof of Lemma \ref{uniqueL}, the volume of
    $U^{cs}_1(f^n(J))$ grows exponentially, but its diameter grows only
    linearly.  This contradicts the polynomial growth of volume given in Lemma
    \ref{polygrow}.
\end{proof}

%%%%%%%%%%%%%%%%%%%%%%%%%%%%%%%%%%%%%%%%%%%%%%%%%%%%%%%%%%%%%%%%%%%%%%%%%%%%%%%%%%%%%%%%%%%

\section{Dynamical coherence}\label{Sect-Coherence}

This section gives the proof of dynamical coherence. The assumptions are exactly the same as in the previous section. The starting point of the proof is the following consequence of Proposition \ref{Prop-AlmostParallelToCorrect}:

\begin{lemma} \label{Ncomp}
    There is $N>0$ such that for every $\Lcs \in \Fcs$ and $\Lcu \in \Fcu$,
    the intersection $\Lcs \cap \Lcu$ consists of at most $N$ connected
    components, each of which is a properly embedded line.
\end{lemma}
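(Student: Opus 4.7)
The plan is to split the statement into two parts: (1) each connected component of $\Lcs \cap \Lcu$ is a properly embedded line, and (2) the number of components is bounded by a universal constant $N$.

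For (1), observe that $\Ecs \oplus \Eu = TM = \Es \oplus \Ecu$, so $\Lcs$ and $\Lcu$ are transverse. Any connected component $C$ of their intersection is thus a connected $1$-submanifold of $\tM$ tangent to $\Ecs \cap \Ecu = \Ec$, hence either a properly embedded line or an embedded circle; properness follows from properness of $\Lcs$ and $\Lcu$. To rule out circle components I plan to imitate the volume--growth arguments used in Lemma \ref{uniqueL} and Proposition \ref{Prop-AlmostParallelToCorrect}. By Corollary \ref{fixleaf} and its dual for $\Fcu$, both $\Lcs$ and $\Lcu$ are fixed by $f$, so $f$ preserves $\Lcs \cap \Lcu$ and permutes its compact components; some iterate $f^k$ therefore fixes $C$ setwise. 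Saturating $C$ by unit-length unstable segments inside $\Lcu$ produces an $f^k$-invariant open tube whose forward iterates grow exponentially in unstable length while, by Lemma \ref{rightbound} and its $f^{-1}$-analogue (as extracted inside the proof of Proposition \ref{Prop-AlmostParallelToCorrect}), staying trapped in a slab of bounded $p_1$-width. The polynomial volume growth of such slabs from Lemma \ref{polygrow}, combined with the volume lower bound of Lemma \ref{ucs1}, then gives the contradiction.

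For (2), Proposition \ref{csuint} says no unstable leaf of $\tM$ meets $\Lcs$ in more than one point, so no unstable leaf can contain points from two distinct components of $\Lcs \cap \Lcu$. Applied inside $\Lcu$ (whose unstable leaves are ambient unstable leaves), this shows distinct components project to pairwise disjoint images in the one-dimensional leaf space $Q$ of $\Wu|_{\Lcu}$; the dual statement for $\Fcu$ and stable leaves gives the symmetric assertion in $\Lcs$. Each component, being a properly embedded line tangent to $\Ec$ and transverse to $\Wu|_{\Lcu}$, projects diffeomorphically to a connected open interval in $Q$. Two estimates then close the argument. First, using that $\Lcs$ lies within Hausdorff distance $R$ of some $\cLcs \in \Acs$ and $\Lcu$ lies within bounded Hausdorff distance of some $\cLcu \in \Acu$, together with the fact that $\cLcs \cap \cLcu$ is a single model center line $\ell$ whose neighbourhood intersects $\cLcu$ in a tube of bounded unstable width, I bound the union of all these interval-projections by a uniformly bounded subset of $Q$. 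Second, using $f$-invariance of each individual leaf (Corollary \ref{fixleaf} and its dual) and the bounded distance of $f$ from the identity, I would extract a uniform positive lower bound on the length in $Q$ of each individual component-projection: otherwise an iterate of $f$ would collide two distinct components, contradicting disjointness. Dividing the upper bound by the lower bound gives the uniform $N$.

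The hard part will be the lower bound on each individual component-projection. The almost-parallel structure readily supplies the upper bound on the total extent in $Q$, but ruling out arbitrarily thin components appears to require a more delicate dynamical argument: the action of $f$ on $Q$ is, up to bounded error, a translation by a fixed amount (since $f$ is a bounded distance from the identity and $\Lcu$ is fixed), so an arbitrarily short projection would be translated off itself by $f$ into the projection of another component, contradicting the strict $f$-invariance of each leaf of $\Fcs$ given by Corollary \ref{fixleaf}. Making this argument uniform over all pairs $(\Lcs,\Lcu)$ is the key technical step.
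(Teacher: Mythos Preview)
Your argument for ruling out circle components has a genuine gap. You invoke ``Lemma \ref{rightbound} and its $f^{-1}$-analogue (as extracted inside the proof of Proposition \ref{Prop-AlmostParallelToCorrect})'' to trap forward iterates in a slab of bounded $p_1$-width. But that analogue, namely a bound $p_1(f^n(x))-p_1(x)<K$, was obtained in the proof of Proposition \ref{Prop-AlmostParallelToCorrect} only under the hypothesis-for-contradiction that $\Fcu$ is almost parallel to $\Acs$; once that hypothesis is discharged the bound is simply false. Indeed Lemma \ref{rightbound} rewritten says $p_1(f^n(x))>p_1(x)-K$, i.e.\ forward iterates are bounded \emph{below} in $p_1$, and the genuine $cu$-dual (using $\Fcu$ almost parallel to $\Acu$ and stable rather than unstable segments) yields the same inequality again, not an upper bound. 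In fact $p_1(f^n(x))\to+\infty$ (this is Lemma \ref{toinfty}, proved just after the present lemma), so the tube you build escapes every slab and the volume comparison with Lemma \ref{polygrow} never gets off the ground.

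The paper's argument here is a one-liner you are missing: each component is a curve in the plane $\Lcs$ transverse to the stable foliation $\Ws|_{\Lcs}$, and no circle can be transverse to a foliation of a plane by lines (a circle would bound a disk, and a nonsingular line field on a disk must be tangent to the boundary somewhere). No dynamics is needed.

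For the bound on the number of components, your leaf-space strategy is workable in spirit but you correctly flag the lower bound on individual interval-lengths as the hard step, and your sketch for it does not close: $f$ permutes the components of $\Lcs\cap\Lcu$ rather than fixing them individually, the leaf space $Q$ carries no canonical metric in which ``translation by a fixed amount'' makes sense, and at this point in the paper you do not yet know the permutation has finite orbits (that is essentially what you are trying to prove). The paper sidesteps all of this with a much more direct argument: distinct components are uniformly $\epsilon$-separated in $\tM$ (else a single unstable leaf would meet two of them, violating Proposition \ref{csuint}); each component, being a properly embedded line, has $p_1$-image a half-line or all of $\bbR$; and for every $t$ the slice $D_t=\{x:\dist(x,\cLcs)<R,\ \dist(x,\cLcu)<R,\ p_1(x)=t\}$ has diameter bounded independently of $t,\cLcs,\cLcu$ by Lemma \ref{RR1} and its $cu$-counterpart. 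A pigeonhole on the half-line directions then forces many components to meet a single $D_t$, contradicting the $\epsilon$-separation. This replaces your dynamical lower bound by a purely geometric packing bound.
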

\begin{proof}
    Each component of $\Lcs \cap \Lcu$ is a one-dimensional manifold in $\Lcs$
    transverse to the unstable foliation $\Ws$ restricted to $\Lcs$.  Since no
    circle can be transverse to a foliation of a plane by lines, each
    component is a line.  % TODO: Add line/plane citation?
    If a component is not properly embedded, it accumulates at a point
    $y \in \Lcu$ and $\Wu(y)$ intersects $\Lcs$ more than once, contradicting
    Proposition \ref{csuint}.
    Distinct components $C_1$ and $C_2$ must be bounded away from each other
    by some uniform distance $\ep > 0$, as otherwise a single unstable leaf
    would intersect both $C_1$ and $C_2$ and again give a contradiction.

    By Lemma \ref{uniqueCL} (and its $cu$ counterpart) and Proposition \ref{Prop-AlmostParallelToCorrect}, there are unique leaves
    $\cLcs \in \Acs$ and $\cLcu \in \Acu$ such that
    $\HD(\Lcs, \cLcs) < R$
    and
    $\HD(\Lcu, \cLcu) < R$.
    For any $t \in \bbR$, define the disk
    \[
        D_t := \{ x \in \tM :
        \dist(x, \cLcs) < R,
        \ \dist(x, \cLcu) < R,
        \ p_1(x) = t \}.
    \]
    By Lemma \ref{RR1} (and its $cu$ counterpart), the diameter of $D_t$ is
    finite and depends purely on $R$ and not $\cLcu$, $\cLcs$, or $t$.
    In particular, there is $K > 0$ such that any subset of $D_t$ with at
    least
    $K$ points has two points at distance less than $\ep$.

    Since a connected component $C$ of $\Lcs \cap \Lcu$ is properly embedded,
    its image $p_1(C) \subset \bbR$ is of the form
    $[a, +\infty)$, $(-\infty, a]$, or $\bbR$.
    If there are more than $2K$ components, then there is $t \in \bbR$ such that
    $t \in p_1(C_i)$ for $K$ different components $C_1, \ldots, C_K$.
    Each $C_i$ then intersects $D_t$, and this gives a contradiction.
\end{proof}

The following key property in the proof is inspired by a similar argument due to Bonatti and Wilkinson (see Corollary 3.11 of \cite{BW}):

\begin{lemma} \label{noper}
    The diffeomorphism $f$ has no periodic points.
\end{lemma}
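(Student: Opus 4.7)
The plan is to argue by contradiction, pitting the exponential growth of an unstable arc through a periodic point against the polynomial volume growth of horizontal slabs in $\tM$ (Lemma \ref{polygrow}); this is the same dichotomy already exploited in Lemma \ref{uniqueL} and in Proposition \ref{Prop-AlmostParallelToCorrect}, but now anchored at the alleged fixed point rather than inside an almost-parallel branching leaf.

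Suppose for contradiction that $p \in \tM$ satisfies $f^n(p)=p$. I would first fix an unstable arc $J \subset \Wu(p)$ of length $1$ with $p$ as one endpoint. Since $p$ is fixed by $f^n$ and the unstable foliation is $f$-invariant, each iterate $f^{nk}(J)$ is again an unstable arc through $p$ lying in $\Wu(p)$, and uniform expansion of $\Eu$ gives $\mu>1$ such that $\length(f^{nk}(J)) \ge \mu^{k}$ for all sufficiently large $k$.

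The next step is to confine these arcs to a region of $\tM$ that is too small to accommodate their growth. On the one hand, since $f$ is a finite distance $D$ from the identity on $\tM$, one has $d(f^{nk}(x),x) \le nkD$ for every $x$, so $f^{nk}(J)$ lies in a ball of radius $nkD+1$ around $p$. On the other hand, the two-sided refinement of Lemma \ref{rightbound} derived in the proof of Proposition \ref{Prop-AlmostParallelToCorrect} supplies a constant $K>0$ with $|p_1(f^{m}(x))-p_1(x)|<K$ for every $x \in \tM$ and every $m \in \ZZ$, which together with Lemma \ref{plip} confines $f^{nk}(J)$ to the slab $\tilde V_{K+2}$, and hence $U^{cs}_{1}(f^{nk}(J))$ to a ball of radius $nkD+2$ inside $\tilde V_{K+3}$.

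The contradiction then falls out by a volume count. By Lemma \ref{ucs1},
\[
\volume\bigl(\interior\,U^{cs}_{1}(f^{nk}(J))\bigr) \;\ge\; C\,\length(f^{nk}(J)) \;\ge\; C\mu^{k},
\]
which grows exponentially in $k$, whereas Lemma \ref{polygrow} bounds the volume of any ball of radius $nkD+2$ inside $\tilde V_{K+3}$ by a polynomial in $k$. These bounds are incompatible for large $k$, so no periodic point of $f$ can exist. No individual step is technically difficult; the only real obstacle is conceptual, namely realizing that the familiar exponential-vs.-polynomial tension used earlier for center-stable branching leaves can be anchored just as well at the putative fixed point $p$, via the two-sided control on $p_{1}$ that we now have in both time directions.
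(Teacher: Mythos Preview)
Your argument has a genuine gap: the two-sided bound $|p_1(f^{m}(x))-p_1(x)|<K$ that you invoke is \emph{not} available. In the proof of Proposition~\ref{Prop-AlmostParallelToCorrect} that inequality appears only under the provisional hypothesis that $\Fcu$ is almost parallel to $\Acs$, and that hypothesis is then shown to be contradictory. In the actual situation (where $\Fcu$ is almost parallel to $\Acu$), the $cu$ analogue of Lemma~\ref{rightbound} gives the same direction of control as the original, not the opposite one; in fact the very next lemma in the paper, Lemma~\ref{toinfty}, shows $p_1(f^n(x))\to +\infty$, so no uniform upper bound on $p_1(f^n(x))-p_1(x)$ can exist. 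Without confinement of the forward iterates $f^{nk}(J)$ to a fixed slab $\tilde V_N$, the polynomial-versus-exponential volume dichotomy collapses and your contradiction evaporates.

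The paper's argument is both shorter and avoids this issue. It uses Corollary~\ref{fixleaf} directly: if $f^n(x)=x$, pick $y\in\Wu(x)$ and any leaf $L\in\Fcs$ through $y$. Since $f(L)=L$ and, by Proposition~\ref{csuint}, $\Wu(x)\cap L=\{y\}$, the point $y$ must also satisfy $f^n(y)=y$. But then the unstable segment from $x$ to $y$ is mapped to itself by $f^{-n}$, which is impossible since $f^{-1}$ strictly contracts unstable arcs. No volume estimate or control on $p_1$ is needed.
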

\begin{remark}
    Here, as in all of this section, this is a statement about the map
    $f$ on the universal cover $\tM$.
    A partially hyperbolic map on the compact manifold $M_A$ can certainly
    have periodic points.
\end{remark}
\begin{proof}
    Suppose $f^n(x) = x$.
    Take $y \in \Wu(x)$ and $L \in \Fcs$ such that $y \in L$.
    As $f(L)=L$ and $y$ is the unique intersection of $\Wu(x)$ and $L$, it
    follows that $f^n(y)=y$.
    This contradicts the fact that $f \inv$ must contract the unstable segment
    between $x$ and $y$.
\end{proof}
\begin{lemma} \label{toinfty}
    For all $x \in \tM$, $\lim_{n \to \infty} p_1(f^n(x)) = +\infty$.
\end{lemma}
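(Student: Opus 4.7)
The plan is to exploit the fact that, thanks to Proposition \ref{Prop-AlmostParallelToCorrect}, the arguments of Section \ref{Section-ModelCorrect} apply symmetrically to both branching foliations, so that Corollary \ref{fixleaf} yields $f(L) = L$ for every $L \in \Fcs$ and, by the symmetric argument, also for every $L \in \Fcu$. Fixing $x \in \tM$ and choosing leaves $L^{cs} \in \Fcs$ and $L^{cu} \in \Fcu$ through $x$, the forward orbit $\{f^n(x)\}_{n \geq 0}$ must therefore lie entirely in $L^{cs} \cap L^{cu}$. By Lemma \ref{Ncomp}, this intersection consists of at most $N$ properly embedded lines; since $f$ permutes these finitely many components, I would replace $f$ by a suitable iterate $f^m$ so that $f^m$ fixes the component $C$ containing $x$ and acts on $C$ preserving orientation. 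This reduction is harmless because the lower bound $p_1(f^n(x)) > p_1(x) - K$ (obtained by rearranging Lemma \ref{rightbound}) allows one to conclude $p_1(f^n(x)) \to +\infty$ from $p_1(f^{mk}(x)) \to +\infty$.

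Having reduced to a fixed component $C \cong \bbR$ on which $f$ acts as an orientation-preserving homeomorphism, Lemma \ref{noper} ensures that $f|_C$ has no fixed points, so $f^n(x)$ tends monotonically to one end of $C$ as $n \to +\infty$. Since $C$ is properly embedded in $\tM$, this implies that the sequence $f^n(x)$ eventually leaves every compact subset of $\tM$.

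The crucial geometric step, and in my view the main obstacle, is to show that $C \cap \tilde V_M$ is precompact in $\tM$ for every $M > 0$. Here I would invoke the unique model leaves $\cL^{cs} \in \Acs$ and $\cL^{cu} \in \Acu$ provided by Lemma \ref{uniqueCL} and Proposition \ref{Prop-AlmostParallelToCorrect}, so that every point of $L^{cs} \cap L^{cu}$ at height $p_1 = t$ lies in the disc $D_t$ from the proof of Lemma \ref{Ncomp}. By Lemma \ref{RR1}, $D_t$ has diameter bounded independently of $t$, and the discs $D_t$ are centered on the model center line $\cL^{cs} \cap \cL^{cu}$, which is parameterized by $p_1$. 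Consequently $C \cap \tilde V_M$ has diameter at most $2D_{\max} + 2M$, and completeness of $\tM$ gives precompactness. The reason this is the delicate step is that it genuinely requires the combined geometry of both foliations, which is why Proposition \ref{Prop-AlmostParallelToCorrect} had to be established first.

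To conclude, suppose for contradiction that $p_1(f^n(x)) \not\to +\infty$. The lower bound from Lemma \ref{rightbound} forces $a := \liminf_{n \to \infty} p_1(f^n(x))$ to be finite, so there is a subsequence $n_k \to \infty$ with $p_1(f^{n_k}(x)) \to a$. Then $f^{n_k}(x) \in C \cap \tilde V_{|a|+1}$ for all large $k$, which is precompact; this contradicts the fact that the forward orbit eventually leaves every compact subset of $\tM$, completing the proof.
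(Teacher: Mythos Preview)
Your argument is correct and follows essentially the same route as the paper's proof: both use that $f$ fixes every leaf of $\Fcs$ and $\Fcu$, restrict to a component $C$ of $\Lcs\cap\Lcu$ fixed by an iterate, invoke Lemma \ref{noper} to get a fixed-point-free homeomorphism of the line, use that $\Lcs\cap\Lcu\cap p_1^{-1}([-T,T])$ is compact (you spell out the $D_t$-disc argument from Lemma \ref{Ncomp}, which the paper leaves implicit), and then appeal to Lemma \ref{rightbound} to upgrade $|p_1(f^{mk}(x))|\to\infty$ to $p_1(f^n(x))\to+\infty$. The only cosmetic issue is that after ``replacing $f$ by $f^m$'' you continue to write $f$ in the final paragraph, which can momentarily confuse whether the subsequence $f^{n_k}(x)$ is guaranteed to lie in $C$; making explicit that the contradiction argument is carried out for the iterate $f^m$ (and then invoking your earlier reduction) removes any ambiguity.
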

\begin{proof}
    The point $x$ lies on a connected component $C$ of a set of the form
    $\Lcs \cap \Lcu$.
    By Lemma \ref{Ncomp}, there is $n>0$ such that $f^n(C)=C$ and by Lemma
    \ref{noper} this map has no fixed points.
    By the properties of fixed-point free homeomorphisms of the real line,
    the orbit $f^{n k}(x)$ must eventually leave any compact subset of $C$.
    Since for any $T > 0$, one can show that the set
    $\Lcs \cap \Lcu \cap p_1 \inv([-T,T])$ is compact,
    this implies that
    $
        \lim_{k \to \infty} | p_1(f^{n k}(x)) | = \infty
    $
    and from Lemma \ref{rightbound} the result follows.
\end{proof}
\begin{prop} \label{uniqfoln}
    The branching foliation $\Fcs$ is the unique $f$-invariant, $G$-invariant branching foliation
    tangent to $\Ecs$ and it is a true foliation.
\end{prop}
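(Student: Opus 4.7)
The assertion has two parts: that $\Fcs$ is a true (non-branching) foliation, and that it is unique among $f$- and $G$-invariant branching foliations tangent to $\Ecs$. I plan to prove them in this order.

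\textbf{Proving $\Fcs$ is a true foliation.} By Proposition \ref{disjointleaves}, it suffices to show each $x \in \tM$ lies on a unique leaf of $\Fcs$. Suppose $L_1, L_2 \in \Fcs$ both contain $x$. By Corollary \ref{fixleaf}, $f(L_i) = L_i$ for $i = 1, 2$, so $f^n(x) \in L_1 \cap L_2$ for all $n \geq 0$, and Lemma \ref{toinfty} gives $p_1(f^n(x)) \to +\infty$. Taking $z_n := f^n(x) \in L_1$ and $z'_n := f^n(x) \in L_2$, we have $\sup d(z_n, z'_n) = 0 < \infty$, so Lemma \ref{glueF} forces $L_1 = L_2$.

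\textbf{Uniqueness.} Let $\cF'$ be another $f$- and $G$-invariant branching foliation tangent to $\Ecs$. The plan is to re-run the entire machinery of Section \ref{Section-ModelCorrect} with $\cF'$ in place of $\Fcs$; the arguments there rely only on the branching-foliation axioms, tangency to $\Ecs$, $f$- and $G$-invariance, and almost-parallelism to a model, so they transfer as soon as the last property is verified. Combining Propositions \ref{propbi72}, \ref{propcompactleaf}, and \ref{propmodel}, $\cF'$ is almost parallel to either $\Acs$ or $\Acu$; the $\Acu$ alternative is ruled out exactly as in the proof of Lemma \ref{uniqueL} (the exponential volume growth of the $U^{cs}_1$-neighborhoods of unstable arcs inside the branching set would contradict the linear growth in slabs $\{|p_1| < n\}$). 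Thus $\cF'$ is almost parallel to $\Acs$, the analog of Corollary \ref{fixleaf} applies, and every leaf of $\cF'$ is $f$-fixed.

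Now fix $x \in \tM$, let $L \in \Fcs$ be the unique leaf through $x$, and let $L' \in \cF'$ be a leaf through $x$. The intersection $L \cap L'$ is closed in $L'$, contains $x$, and is open in $L'$: in a foliation chart for the (now true) foliation $\Fcs$ around any $y \in L \cap L'$, the $C^1$ submanifold $L'$ has tangent plane $\Ecs(y) = T_y L$ and so locally coincides with the plaque of $L$. Connectedness of $L'$ gives $L' \subseteq L$, and since both are 2-dimensional properly embedded planes, $L' = L \in \Fcs$. The covering property of $\cF'$ then forces $\cF' = \Fcs$.

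\textbf{Main obstacle.} The delicate step is the openness of $L \cap L'$ in $L'$: one needs that a $C^1$ submanifold tangent to the continuous distribution $\Ecs$ at each of its points locally coincides with the leaf of $\Fcs$ through that point. The standard foliation-chart argument applies because $\Fcs$ is now a true foliation with $C^1$ leaves, but the chart is only $C^0$-trivial and the tangency-to-$\Ecs$ condition must be used to upgrade the local graph of $L'$ to a plaque. An alternative is to avoid this local step entirely by running the gluing argument: the forward orbit of any point in $L \cap L'$ stays in $L \cap L'$ with $p_1 \to +\infty$, and combining Lemma \ref{glueA} with the extension of Lemma \ref{uniqueL} to $\cF'$ matches $L$ and $L'$ to the same leaf of $\Acs$, forcing $L = L'$ by the uniqueness of the correspondence.
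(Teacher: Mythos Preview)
Your Part 1 is correct and matches the paper's reasoning. The paper, however, packages both parts into a single construction: it defines $H:\tM\to\Acs$ by $H(x)=\cL$ iff $\sup_{n\ge 0}\dist(f^n(x),\cL)<\infty$, checks (via Corollary \ref{fixleaf} and Lemmas \ref{uniqueCL}, \ref{toinfty}, \ref{glueA}) that $H$ is well-defined with fibers exactly the leaves of $\Fcs$, and then observes that since $H$ is defined without reference to $\Fcs$, any competing branching foliation must have the same fibers as its leaves. Disjointness of fibers yields both the true-foliation statement and uniqueness at once. Your explicit re-running of Section \ref{Section-ModelCorrect} for $\cF'$ is the same argument unpacked, but the intrinsic map $H$ makes the logic tighter and avoids having to separately verify each lemma for $\cF'$.

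Two remarks on your Part 2. First, the argument that rules out $\cF'$ being almost parallel to $\Acu$ is that of Proposition \ref{Prop-AlmostParallelToCorrect}, not Lemma \ref{uniqueL}: one combines the original Lemma \ref{rightbound} (from $\Fcs$) with its analog for $\cF'$ to bound $|p_1(f^n(x))-p_1(x)|$ uniformly, and then invokes the volume contradiction. Second, your local tangency argument for $L'\subseteq L$ is, as you suspect, problematic: the claim that any $C^1$ surface tangent to $\Ecs$ locally lies in a leaf of $\Fcs$ is precisely unique integrability of $\Ecs$, which is the content of the uniqueness you are proving, so this step is circular. Your alternative via the $\Acs$-correspondence is the right fix and is exactly the paper's $H$-map argument in disguise: once both $L$ and $L'$ lie in the fiber $H^{-1}(H(x))=L$, the inclusion $L'\subseteq L$ of properly embedded planes forces equality.
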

\begin{proof}
    Define a map $H: \tM \to \Acs$ by requiring that
    \[
        \sup_{n  \ge  0} \dist(f^n(x), H(x)) < \infty.
    \]
    Using Corollary \ref{fixleaf} and Lemmas \ref{uniqueCL}, \ref{toinfty}, and
    \ref{glueA}, one can check that $H$ is well-defined and its fibers $H
    \inv(\{\cL\})$ are exactly the leaves of $\Fcs$.
    Since the definition of $H$ does not use $\Fcs$, this shows there is only
    one such possible branching foliation.  As the fibers are disjoint,
    Proposition \ref{disjointleaves} shows that $\Fcs$ is a true foliation.
\end{proof}
Taken with the corresponding statement for the ${cu}$ foliation, Proposition
\ref{uniqfoln} implies dynamical coherence
and completes the proof of Lemma \ref{lemmadc}.
Modulo the proof of Proposition \ref{proplist3}, which is left to Appendix
\ref{Apendix-FiniteQuotients}, this also establishes Proposition
\ref{proplist4} and Theorem A.

%%%%%%%%%%%%%%%%%%%%%%%%%%%%%%%%%%%%%%%%%%%%%%%%%%%%%%%%%%%%%%%%%%%%%%%%%%%%%%%%%%%%%%%%%%%%

\section{Leaf conjugacy}\label{Sect-LeafConj}

This section gives the proof of Proposition \ref{proplist5}.
Assume $f_0:M_A \to M_A$ satisfies the hypotheses of
Proposition \ref{proplist5}.
As $f_0$ is homotopic to the identity, it lifts to a function $f:\tM \to \tM$
which is a finite distance from the identity.  As $f_0$ is dynamically
coherent, there are $f_0$-invariant (non-branching) foliations tangent to
$\Ecs$ and $\Ecu$ and, by a result announced in \cite{HHU}, these foliations do not
have compact leaves.  Therefore, Proposition \ref{propmodel} applies and these
foliations lift to foliations on $\tM$ which are almost parallel either to
$\Acs$ or $\Acu$.

The proofs in Sections \ref{Section-ModelCorrect} and \ref{Sect-Coherence}
now follow as before, with these true foliations taking the place of the
branching foliations.
There is, however, one small caveat.
It is possible for $f_0$ to be both dynamically coherent and to have an
invariant torus tangent to $\Ecs$ or $\Ecu$.
(See \cite{HHU} for an example.)
As such, there may be a branching foliation on $\tM$ with a leaf which
quotients down to a compact surface on $M_A$.
Such a branching foliation will not be almost parallel to $\Acs$ or to $\Acu$.
This issue only affects the proof of uniqueness in Proposition \ref{uniqfoln}.
Since Proposition \ref{uniqfoln} is not needed to establish the leaf
conjugacy, this is not an issue in proving Proposition \ref{proplist5}.

\begin{prop} \label{rightmove}
    There is $n_0>0$ such that $p_1(f^n(x)) > p_1(x)+1$
    for all $x \in \tM$ and $n>n_0$.
\end{prop}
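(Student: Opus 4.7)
The plan is to regard $\Delta_n(x) := p_1(f^n(x)) - p_1(x)$ as a Birkhoff sum of a continuous observable on the compact quotient $M_A$, and to upgrade the pointwise divergence $\Delta_n \to +\infty$ given by Lemma \ref{toinfty} to a uniform one. Since $f$ commutes with deck transformations and $p_1(\gamma x) - p_1(x)$ depends only on $\gamma$ by Lemma \ref{pconst}, the one-step drift $\delta(x) := p_1(f(x)) - p_1(x)$ is $G$-invariant and descends to a continuous function $\bar\delta$ on $M_A$. Telescoping yields
\[
    \Delta_n(x) = \sum_{k=0}^{n-1} \bar\delta\bigl(\bar f_0^k(\bar x)\bigr),
\]
so $\Delta_n$ itself is $G$-invariant, and the infimum $a_n := \inf_{x \in \tM} \Delta_n(x)$ is actually attained at some point of a compact fundamental domain.

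Two pieces of uniform control then come for free. Applying Lemma \ref{rightbound} to $y = f^{-n}(x)$ (so that $x = f^n(y)$) gives the uniform lower bound $\Delta_n(y) > -K$ for all $y \in \tM$ and all $n \geq 0$. The cocycle identity $\Delta_{n+m}(x) = \Delta_n(f^m x) + \Delta_m(x)$ yields superadditivity $a_{n+m} \geq a_n + a_m$.

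The main step is to deduce $a_n \to +\infty$; once this is known, picking $n_0$ with $a_n > 1$ for all $n > n_0$ finishes the proof. Suppose for contradiction that there are $M > 0$ and a subsequence $n_k \to \infty$ with $a_{n_k} \leq M$. Choose minimizers $x_k$ inside a fixed compact fundamental domain, and, passing to a subsequence, assume $x_k \to x^*$. By Lemma \ref{toinfty} pick $N$ with $\Delta_N(x^*) > M + K + 1$, and by continuity of $\Delta_N$ obtain $\Delta_N(x_k) > M + K$ for all sufficiently large $k$. For such $k$ with $n_k > N$, the cocycle identity gives
\[
    \Delta_{n_k - N}\bigl(f^N(x_k)\bigr) = \Delta_{n_k}(x_k) - \Delta_N(x_k) \leq M - (M+K) = -K,
\]
contradicting the uniform lower bound $\Delta_{n_k - N} > -K$ established above. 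Hence $a_n \to +\infty$.

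The main obstacle is precisely this passage from pointwise to uniform divergence, since Lemma \ref{toinfty} is only a pointwise statement. What makes the argument go through is the combination of three ingredients: the $G$-invariance, which reduces the infimum to a compact set; the continuity of each $\Delta_N$, which lets one transfer control at an accumulation point $x^*$ to control at nearby $x_k$; and the uniform lower bound from Lemma \ref{rightbound}, which supplies the contradiction after the Birkhoff-sum splitting. Without any one of these, a subsequence of minimizers could escape in a way that defeats the comparison.
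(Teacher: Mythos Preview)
Your proof is correct and uses the same three ingredients as the paper's own argument: the $G$-invariance from Lemma~\ref{pconst} to reduce to a compact fundamental domain, the pointwise divergence from Lemma~\ref{toinfty}, and the uniform lower bound from Lemma~\ref{rightbound} combined with the cocycle splitting $\Delta_n = \Delta_N + \Delta_{n-N}\circ f^N$. The only cosmetic difference is that the paper packages the compactness step as a finite open cover (the sets $U_n = \{x : \Delta_n(x) > K+1\}$ cover a fundamental domain, so finitely many suffice and one takes $n_0 = \max n_i$), whereas you extract a convergent subsequence of minimizers and derive a contradiction; your mention of superadditivity is incidental and not actually used.
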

\begin{proof}
    By Lemma \ref{pconst}, it is enough to prove this for a compact fundamental
    domain.
    By Lemma \ref{toinfty}, the sets
    \[
        U_n = \{ x \in \tM : p_1(f^n(x)) - p_1(x) > K+1 \}.
    \]
    for $n \in \bbN$ form an open cover of $\tM$.
    Thus, some finite union
    $
        U_{n_1} \cup \cdots \cup U_{n_k}
    $
    covers a fundamental domain.
    One can then show using Lemma \ref{rightbound} that $n_0 = \max\{n_1, \ldots,
    n_k\}$ satisfies the desired property.
\end{proof}

By adapting a proof of Fuller regarding flows \cite{fuller1965}, we prove the
following.

\begin{lemma} \label{projc}
    There is a continuous function $p_c:\tM \to \bbR$ such that
    \begin{itemize}
        \item
        for each leaf $L \in \Fc$, the restriction $p_c|_L : L \to \bbR$
        is a $C^1$ diffeomorphism,
        \item
        for each $t \in \bbR$, the level set $p_c \inv(\{t\})$ is an embedded $C^0$
        surface in $\tM$, and
        \item
        $p_c(\gamma(x)) - p_c(x) = p_1(\gamma(x)) - p_1(x)$
        for all $\gamma \in G$ and $x \in \tM$.
    \end{itemize}  \end{lemma}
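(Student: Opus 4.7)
The plan is to define $p_c$ as an average of $p_1$ along the arc-length center flow, directly adapting Fuller's construction of a continuous time function for fixed-point-free flows. The construction proceeds in three main steps.

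\textbf{Step 1 (center flow).} Using that $E^c$ is oriented and $\Fc$ is a true $C^1$ foliation (Proposition \ref{uniqfoln}), I take the continuous $G$-invariant unit vector field along $E^c$ in the positive direction and integrate it along center leaves. This yields a continuous flow $\psi:\tM \times \bbR \to \tM$ that commutes with every deck transformation in $G$ and restricts on each center leaf to an arc-length parameterization.

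\textbf{Step 2 (linear growth of $p_1$ along $\psi$).} I first establish that there exist constants $c > 0$ and $T_0 > 0$ with
\[
p_1(\psi_T(x)) - p_1(x) \ge c\,T \qquad \text{for all } x \in \tM,\ T \ge T_0.
\]
The ingredients are: (i) every center leaf is $f$-invariant by Corollary \ref{fixleaf}; (ii) by Lemma \ref{noper} the lift $f$ has no fixed points on $\tM$; (iii) the leafwise displacement $d_L(x,f(x))$ is continuous and $G$-invariant, hence descends to a \emph{positive} continuous function on the compact $M_A$, giving uniform two-sided bounds $\delta_1 \le d_L(x,f(x)) \le D_1$; and (iv) iterating Proposition \ref{rightmove} yields $p_1(f^N(x)) - p_1(x) \ge N/n_0 - O(1)$. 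Since $f|_L$ preserves the orientation of $L \cong \bbR$ and has no fixed points, it is a monotone translation-like map, so $d_L(x,f^N(x)) = \sum_{k<N} d_L(f^k(x), f^{k+1}(x))$ lies in $[N\delta_1, ND_1]$. Writing $\psi_T(x)$ as an arc-distance-at-most-$D_1$ perturbation of $f^{N(T,x)}(x)$ for some $N(T,x)$ comparable to $T/D_1$, and applying the Lipschitz bound of Lemma \ref{plip}, the displayed inequality follows.

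\textbf{Step 3 (convolution and verification).} Fix a smooth nonnegative kernel $K:\bbR \to [0,\infty)$ supported in $[T_0,2T_0]$ with $\int K = 1$, and set
\[
p_c(x) := \int_{\bbR} p_1(\psi_t(x))\, K(t)\, dt.
\]
Continuity of $p_c$ is immediate. For the cocycle property, since $\psi_t$ commutes with $G$ and $p_1(\gamma y) - p_1(y)$ is independent of $y$ by Lemma \ref{pconst}, one obtains $p_c(\gamma x) - p_c(x) = p_1(\gamma x) - p_1(x)$, so $p_c - p_1$ is $G$-invariant. Along each center leaf, $s \mapsto p_c(\psi_s(x))$ is the convolution of the continuous map $\tau \mapsto p_1(\psi_\tau(x))$ with the smooth kernel $K$, hence smooth in $s$. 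After one integration by parts its derivative equals a weighted average of the directional derivative of $p_1$ along $\psi$ over the interval $[T_0+s,2T_0+s]$; applying Step 2 at the base point $\psi_{T_0+s}(x)$ with time $T_0$, this average is bounded below by a positive constant uniformly in $x$ and $s$. Therefore $p_c|_L$ is strictly monotone with uniformly positive derivative and maps $L$ onto $\bbR$, hence is a $C^1$ diffeomorphism. Level sets $p_c^{-1}(\{t\})$ meet each center leaf in exactly one point; standard foliation charts for $\Fc$ combined with the local leafwise diffeomorphism property show that they are embedded $C^0$ surfaces in $\tM$.

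\textbf{Main obstacle.} The technical heart of the proof is Step 2: converting the discrete dynamical monotonicity of Proposition \ref{rightmove} into a uniform continuous-time statement about the arc-length flow along center leaves. This conversion rests on the uniform lower bound $\delta_1 > 0$ on the leafwise displacement of $f$, which itself hinges on the fixed-point-free property of the lift (Lemma \ref{noper}) together with the compactness of $M_A$. Once Step 2 is available, the convolution construction of Step 3 is a direct adaptation of Fuller's averaging idea.
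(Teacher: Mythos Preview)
Your overall approach is the same as the paper's: average $p_1$ along arc-length on center leaves, in the spirit of Fuller. There are two issues, one minor and one that actually breaks the argument as written.

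The minor one: in Step~2 you invoke Corollary~\ref{fixleaf} to say every center leaf is $f$-invariant. That corollary only fixes leaves of $\Fcs$ (and, by the analogous argument, of $\Fcu$); at this point in the paper a center leaf is merely a connected component of $\Lcs\cap\Lcu$, and Lemma~\ref{Ncomp} only gives that $f$ permutes at most $N$ such components. So you should work with a fixed iterate $f^n$ that stabilises the component, exactly as the paper does; the rest of your Step~2 then goes through.

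The real gap is in Step~3. With a smooth bump $K$ supported in $[T_0,2T_0]$, the derivative along a leaf is
\[
\frac{d}{ds}\,p_c(\psi_s(x))=\int K(\tau-s)\,g'(\tau)\,d\tau,
\qquad g(\tau)=p_1(\psi_\tau(x)),
\]
a $K$-weighted average of the instantaneous center derivative $g'$. Step~2 only controls the unweighted increment $\int_{T_0+s}^{2T_0+s} g'(\tau)\,d\tau=g(2T_0+s)-g(T_0+s)\ge cT_0$; it says nothing about $g'$ pointwise (indeed $g'$ may be negative on large portions of the interval, subject only to $|g'|\le 1$ from Lemma~\ref{plip}). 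There is no reason the $K$-weighted average inherits a positive lower bound from the uniform one, so the claimed strict monotonicity of $p_c|_L$ does not follow. The fix is exactly what the paper does: take the box kernel $K=\tfrac{1}{T}\mathbf 1_{[0,T]}$. Then by the Fundamental Theorem of Calculus the leafwise derivative is precisely the increment $\tfrac{1}{T}\big(p_1(\psi_{s+T}(x))-p_1(\psi_s(x))\big)$, and your Step~2 (with $T\ge T_0$) gives the uniform positive lower bound directly. With this change, the rest of your verification (continuity, the $G$-cocycle identity via Lemma~\ref{pconst}, and the level sets being transversals to $\Fc$) is correct.
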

\begin{proof}
    By Lemma \ref{Ncomp} and Proposition \ref{rightmove},
    there is $n>0$ such for all $x \in \tM$, $f^n(x)$ is on the same center leaf
    as $x$ and $p_1(f^n(x)) >  p_1(x) + 1$.
    By a compactness argument, the distance $d_c(x,f^n(x))$ along the center
    leaf is uniformly bounded both above and away from zero.
    Consequently, there is $T>0$ such that if $x$ and $y$ are on
    the same center leaf and $d_c(x,y) > T$, then $|p_1(x) - p_1(y)| > 1$.

    For a leaf $L \in \Fc$, let $\alpha:\bbR \to L$ be a path parameterizing $L$ by
    arc length and oriented such that $p_1 (\alpha(t)) \to +\infty$ as $t \to
    +\infty$.  Define $p_c$ on $L$ by
    \[
        p_c(\alpha(t)) = \frac{1}{T} \int_{t}^{t+T} p_1(\alpha(s))\, d s.
    \]
    By the Fundamental Theorem of Calculus,
    \[
        \frac{d}{dt}\ p_c(\alpha(t))
        = \frac{1}{T} \Big[p_1(\alpha(t+T)) - p_1(\alpha(t))\Big]
        > \frac{1}{T}
      \]
    so that $p_c \circ \alpha$ is a $C^1$ diffeomorphism.
    From this, the above listed properties are easily verified.
\end{proof}

Recall from Section \ref{Section-SolvAndModel} that points in $\tM$ can be written in
coordinates $(v,t)$ with $v \in \bbR^2$ and $t \in \bbR$
and that the group $G$ of deck transformations is generated
by the isometries $\gam_i:\tM\to\tM$ ($i=1,2,3$).
In these coordinates, $\gam_3 \inv(v,t) = (Av, t-1).$
Define a surface $\tilde S = p_c \inv(\{0\}) \subset \tM$ and a map
$\psi:\tilde S \to \tilde S$ by $\psi(x) = \gam_3 \inv(y)$
where $y$ is the unique point in $\Fc(x)$ such that $p_c(y) = 1$.

One can show that if $(v,t) \in \tilde S$, then $(v+z, t) \in \tilde S$ for
$z \in \bbZ^2$ and
\[
    \psi(v+z,t) = \psi(v,t) + (Av, 0).
\]
Now quotient $\psi$ down to a map $\psi_0:S \to S$ on $S \subset M_A$.
The surface $S$ is homeomorphic to a 2-torus and the induced action of of
$\psi_0$ on the
fundamental group of $S$ is given by the linear map $A$.

\begin{lemma} \label{expanse}
    The map $\psi$ is expansive.
\end{lemma}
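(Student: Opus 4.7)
The plan is to equip $\tilde S$ with two transverse, $\psi$-invariant one-dimensional foliations inherited from the branching foliations $\Fcs$ and $\Fcu$, show that distinct points on a common leaf of one foliation are pushed apart by forward iterates of $\psi$ (respectively by backward iterates on the other), and then combine this with a local product structure. Concretely, for each $x \in \tilde S$ I would set $\cW^s_\sigma(x) := \Fcs(x) \cap \tilde S$ and $\cW^u_\sigma(x) := \Fcu(x) \cap \tilde S$. Since $\tilde S$ is transverse to $\Ec$ and $\Ecs \cap \Ecu = \Ec$, the two trace line fields are distinct at every point of $T\tilde S$, so the foliations are mutually transverse. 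Both are $\psi$-invariant, because the defining recipe for $\psi$ consists of flowing along $\Fc$-leaves (which are contained in both $\Fcs$ and $\Fcu$ leaves) and then applying the isometry $\gam_3^{-1}$ (which preserves both foliations globally).

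Next I would establish topological expansion along $\cW^u_\sigma$: for any distinct $x \ne y$ on a common $\cW^u_\sigma$-leaf, $d(\psi^n(x), \psi^n(y)) \to +\infty$ as $n \to +\infty$. By induction on the recipe, $\psi^n(x) = \gam_3^{-n}(\hat x_n)$ with $\hat x_n \in \Fc(x)$ and $p_c(\hat x_n) = n$, and likewise for $y$; since $\gam_3^{-n}$ is an isometry, $d(\psi^n(x), \psi^n(y)) = d(\hat x_n, \hat y_n)$. Both points lie in the single $cu$-plane $\Fcu(x) = \Fcu(y)$ on distinct center leaves, distinct because $p_c$ identifies each $\Fc$-leaf with $\bbR$ (Lemma \ref{projc}). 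By Proposition \ref{Prop-AlmostParallelToCorrect}, $\Fcu(x)$ is within Hausdorff distance $R$ of some $\cL \in \Acu$, and in the explicit $(u, t)$-coordinates on $\cL$ from Section \ref{Section-SolvAndModel} two distinct center orbits have mutual $\tM$-distance going to $\infty$ as $t \to +\infty$. Combined with Lemma \ref{plip}, the uniform bound $|p_c - p_1| \le T$ coming from the averaging construction in Lemma \ref{projc}, and $p_c(\hat x_n) = n$, this forces $p_1(\hat x_n) \to +\infty$ and hence $d(\hat x_n, \hat y_n) \to \infty$. A symmetric argument applied to $\psi^{-1}$, $\cW^s_\sigma$, and the model $\Acs$ yields backward topological expansion along $\cW^s_\sigma$-leaves.

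Finally, transversality of $\cW^s_\sigma$ and $\cW^u_\sigma$ provides a local product structure: there is $\delta > 0$ such that for any $x, y \in \tilde S$ with $d(x, y) < \delta$ the set $[x, y] := \cW^u_\sigma(x) \cap \cW^s_\sigma(y)$ consists of a single point, varying continuously with $(x, y)$ and within a uniform multiple of $\delta$ of both $x$ and $y$. Suppose for contradiction that $\psi$ is not expansive; then for arbitrarily small $\eps$ there are distinct $x, y \in \tilde S$ with $d(\psi^n(x), \psi^n(y)) < \eps$ for all $n \in \bbZ$. The $\psi$-invariance of the two foliations and the uniqueness of the product intersection give $[\psi^n(x), \psi^n(y)] = \psi^n([x, y])$, and this point stays uniformly close to $\psi^n(x)$ and $\psi^n(y)$. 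Since $x \ne y$, at least one of $(x, [x, y])$ or $([x, y], y)$ is a pair of distinct points on a common $\cW^u_\sigma$- or $\cW^s_\sigma$-leaf, contradicting the unboundedness established in the previous paragraph.

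The main obstacle is the topological-expansion step. Because $\psi$ is a return map along center leaves composed with a deck isometry rather than an $f$-power, one cannot directly invoke the exponential $f$-expansion of unstable arcs; the argument must pass through the model foliation $\Acu$ via the almost-parallelism of Proposition \ref{Prop-AlmostParallelToCorrect}, using Proposition \ref{rightmove} and Lemma \ref{projc} to control how the $p_c$-level and $p_1$-level track each other along the sequence $\{\hat x_n\}$. Once this is in hand, the rest of the argument is standard topological hyperbolic dynamics on a surface.
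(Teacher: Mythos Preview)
Your overall strategy—trace foliations on $\tilde S$, topological expansion along each, then local product structure—can be made to work, but the expansion step as you have written it has a real gap. You want to show that distinct $x,y$ on a common $\cW^u_\sigma$-leaf satisfy $d(\hat x_n,\hat y_n)\to\infty$, and you argue by pushing to a model leaf $\cL\in\Acu$ via Proposition \ref{Prop-AlmostParallelToCorrect}. But that almost-parallelism is between the \emph{two-dimensional} foliations $\Fcu$ and $\Acu$; it gives no correspondence between the center curves $\Fc(x),\Fc(y)\subset\Fcu(x)$ and any fixed pair of $\Ac$-orbits in $\cL$. Knowing only that each $\hat x_n$ has some shadow $a_n\in\cL$ within $R$, and likewise $\hat y_n$ has a shadow $b_n$, tells you $d(a_n,b_n)$ is bounded whenever $d(\hat x_n,\hat y_n)$ is—but nothing pins $a_n$ to a single $\Ac$-orbit as $n$ varies, so the fact that \emph{fixed} $\Ac$-orbits separate as $t\to+\infty$ is irrelevant. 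A bounded-distance sequence of pairs in $\cL$ with $p_1\to+\infty$ certainly exists (e.g.\ $a_n=(0,n)$, $b_n=(\lam^{-n},n)$). You correctly flagged this step as the main obstacle, but the proposed resolution through $\Acu$ does not close it.

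The paper's proof avoids this by working one dimension up and never claiming center-leaf expansion. From $d(\gam_3^n\psi^n(x),\gam_3^n\psi^n(y))<\ep$ for all $n\in\bbZ$, Lemma \ref{glueF} (with $n\to+\infty$) gives $\Fcs(x)=\Fcs(y)$ directly; the $cu$-analog (with $n\to-\infty$) gives $\Fcu(x)=\Fcu(y)$. Now $x$ and $y$ could still sit on distinct components of $\Lcs\cap\Lcu$ (recall Lemma \ref{Ncomp} allows up to $N$ of them), and the paper handles this not by any further separation estimate—which would run into exactly your difficulty—but by a short transversality argument: for $\ep$ small the two center leaves $L_x,L_y\subset\Lcu$ are joined by an unstable arc, whose endpoints both lie in $\Lcs$, contradicting Proposition \ref{csuint}. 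Hence $L_x=L_y$, and since $\tilde S$ meets each center leaf once, $x=y$. You could graft the glueF and csuint steps into your framework to close the gap, but at that point you are essentially carrying the paper's three-line argument inside extra scaffolding.
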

\begin{proof}
    Suppose that for $\ep > 0$ there are points $x$ and $y$ such that
    $d(\psi^n(x), \psi^n(y)) < \ep$
    for all $n \in \bbZ$.
    Then,
    $d(\gam_3^n \psi^n(x), \gam_3^n \psi^n(y)) < \ep$
    and Lemma \ref{glueF} shows that $x$ and $y$ lie on the same leaf
    $\Lcs$ of $\Fcs$.
    An analogous argument shows that they lie on the same leaf
    of $\Fcu$.
    If $x$ and $y$ lie on distinct center leaves
    $L_x,L_y \in \Fc$
    and if $\ep$ is sufficiently small, then there is an unstable arc
    connecting $L_x$ to $L_y$.
    This arc intersects $\Lcs$ in distinct points, contradicting Lemma
    \ref{csuint}.
    Hence, $x$ and $y$ lie on the same center leaf, and since $\tilde S$
    intersects each center leaf exactly once, $x=y$.
\end{proof}
\begin{remark}
    Using Proposition \ref{rightmove} and Lemma \ref{expanse}, it is easy to
    show that $f$ is \emph{plaque expansive} as defined in \cite{HPS}.
\end{remark}
\begin{prop} \label{fconj}
    The quotient $f_0:M_A \to M_A$ of $f:\tM \to \tM$ is leaf conjugate to the
    time-one map of a suspension Anosov flow.
\end{prop}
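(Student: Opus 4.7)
The plan is to first conjugate the return map $\psi_0$ on the transversal torus $S$ to the model linear map $A$, and then extend this conjugacy along the center foliation to produce a leaf conjugacy on $M_A$. By Lemma \ref{expanse} and the preceding discussion, $\psi_0 : S \to S$ is an expansive homeomorphism of a 2-torus whose induced action on $\pi_1(S) \cong \bbZ^2$ is the hyperbolic automorphism $A$. Invoking Hiraide's classification of expansive homeomorphisms of the 2-torus, which states that any such homeomorphism is topologically conjugate to a hyperbolic toral automorphism (and that the conjugacy can be chosen in the natural homotopy class), one obtains a homeomorphism $h_0 : S \to S$ with $h_0 \circ \psi_0 = A \circ h_0$. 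Lift $h_0$ to a $G_0$-equivariant homeomorphism $\tilde h : \tilde S \to \bbR^2 \times \{0\}$, where $G_0 = \langle \gam_1, \gam_2 \rangle$ acts by lattice translations on both sides.

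Each $x \in \tM$ lies on a unique center leaf $L = \Fc(x)$, and by Lemma \ref{projc} the restriction of $p_c$ to $L$ is a $C^1$ diffeomorphism onto $\bbR$; in particular $L$ meets $\tilde S = p_c^{-1}(\{0\})$ in a single point $x_0$. Define
\[
    \tilde H(x) := \varphi_{p_c(x)}\bigl(\tilde h(x_0)\bigr),
\]
where $\varphi_t$ is the model Anosov flow on $\tM$ from Section \ref{Section-SolvAndModel}. By construction $\tilde H$ sends each leaf $L \in \Fc$ homeomorphically onto the flow orbit $\Ac(\tilde h(x_0))$. Continuity of $\tilde H$ follows from continuity of the holonomy $x \mapsto x_0$ (using dynamical coherence and Lemma \ref{projc}), continuity of $p_c$, and continuity of $\tilde h$ and $\varphi$. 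Invertibility on each center leaf is built in, and distinct center leaves are sent to distinct flow orbits since $\tilde h$ is a bijection on the transversal. Equivariance under all of $G$ is checked by combining the $G_0$-equivariance of $\tilde h$ with the defining relation $\psi(x) = \gam_3^{-1}(y)$ for the point $y \in \Fc(x)$ with $p_c(y) = 1$: this identity, together with $h_0 \psi_0 = A h_0$ and the formula $\gam_3^{-1}(v,t) = (Av, t-1)$, gives $\tilde H \circ \gam_3 = \gam_3 \circ \tilde H$. Hence $\tilde H$ descends to a homeomorphism $H : M_A \to M_A$.

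To finish, one verifies the leaf-conjugacy identity $H(f_0(L)) = g_0(H(L))$ for every center leaf $L$, where $g_0$ is the time-one of the suspension Anosov flow. Since the orbits of the model flow are precisely the leaves of $\Ac$ and $g_0$ shifts each such orbit by one unit of $p_1$, the identity reduces at the level of leaves to the statement that applying $f$ and then following to the next crossing of $\tilde S$ corresponds under $\tilde h$ to the model return map $A$; this is exactly the content of $h_0 \circ \psi_0 = A \circ h_0$. The principal obstacle in carrying out this plan is the appeal to Hiraide's theorem: one must confirm that $\psi_0$ falls in its scope (the orientation hypotheses on $\Eu,\Ec,\Es$ together with the homotopy-to-identity assumption on $f_0$ guarantee the correct homotopy class and orientability needed) and that the conjugating homeomorphism can be lifted equivariantly. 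The remaining work—continuity of the holonomy to $\tilde S$, well-definedness modulo $G$, and bijectivity of $H$—is essentially bookkeeping, made tractable by the strong structural information already established in Sections \ref{Section-ModelCorrect} and \ref{Sect-Coherence}.
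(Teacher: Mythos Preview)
Your construction of $\tilde H$ is essentially the same as the paper's (the paper cites Lewowicz rather than Hiraide for the expansive-torus theorem, but the content is identical), and the extension along center leaves via $p_c$ matches the paper's formula. The genuine gap is in the final paragraph, where you verify the leaf-conjugacy identity.

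The time-one map $g_0$ of the suspension flow fixes every $\Ac$ leaf, so $H(f_0(L)) = g_0(H(L))$ forces $H(f_0(L)) = H(L)$, and since $H$ is a bijection on center leaves this is equivalent to $f(L) = L$ for every $L \in \Fc$. You have not established this. Your claimed reduction---that it follows from $h_0 \circ \psi_0 = A \circ h_0$---does not work: the return map $\psi$ is built from the center holonomy and the deck transformation $\gam_3$, and involves $f$ nowhere in its definition. What is known at this stage (from Corollary~\ref{fixleaf} and its $cu$ analogue) is that $f$ fixes each leaf of $\Fcs$ and each leaf of $\Fcu$, hence permutes the connected components of any intersection $\Lcs \cap \Lcu$; by Lemma~\ref{Ncomp} there are at most $N$ such components, so only $f^{N!}$ is guaranteed to fix each center leaf.

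The paper closes this gap by proving the stronger statement that $h(\Fcs) = \Acs$ and $h(\Fcu) = \Acu$. This uses the conjugacy on $\tilde S$ together with Lemma~\ref{glueF}: if $H(x)$ and $H(y)$ lie on the same $\Acs$ leaf, then $\psi^n(x)$ and $\psi^n(y)$ converge as $n \to \infty$, hence so do $\gam_3^n \psi^n(x)$ and $\gam_3^n \psi^n(y)$, and these lie on center leaves with $p_1 \to +\infty$, forcing $x$ and $y$ onto the same $\Fcs$ leaf. Once $h$ respects both $\Fcs$ and $\Fcu$, pulling back the fact that each $\Acs \cap \Acu$ is a single $\Ac$ leaf shows each $\Lcs \cap \Lcu$ is a single $\Fc$ leaf, and then $f(L) = L$ is immediate. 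Your argument needs this step (or some substitute for it); the bookkeeping you mention at the end does not cover it.
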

\begin{proof}
    By a result of Lewowicz, any expansive homeomorphism of a torus is
    topologically conjugate to a linear Anosov diffeomorphism
    \cite{Lew}.
    The linear diffeomorphism is determined by the action on the fundamental
    group.

    In the case of $\psi$ and its quotient $\psi_0$, this implies that there is
    a homeomorphism $H: \tilde S \to \bbR^2$ so that $H \circ \psi = A \circ H$
    and $H(v+z,t) = H(v,t) + (Az, 0)$ for all $z \in \bbZ^2$.

    Now define a unique homeomorphism $h : \tM \to \tM$ by requiring
    \begin{enumerate}
        \item
        that the leaf of $\Fc$ through the point $x \in \tilde S$ is mapped to
        the leaf of $\Ac$ through the point $(H(x), 0) \in \tM$, and
        \item
        that $p_c \circ h = p_1.$
    \end{enumerate}
    By its construction, $h$ maps $\Fc$ to $\Ac$.
    Suppose $x,y \in \tilde S$ are such that $H(x)$ and $H(y)$ are on the same
    stable leaf of $A$.  By the conjugacy,
    $d(\psi^n(x), \psi^n(y)) \to 0$ as $n \to \infty$, which implies that
    $d(\gam_3^n \psi^n(x), \gam_3^n \psi^n(y)) \to 0$.
    As $\psi$ was defined in such a way that $\gam_3 \psi(x)$ is on the same
    center leaf as $x$, it follows from Lemma \ref{glueF} that $x$ and $y$ are
    on the same leaf of $\Fcs$.  Hence, $h(\Fcs)=\Acs$.  Similarly,
    $h(\Fcu)=\Acu$.

    Since each leaf of $\Acs$ intersects each leaf of $\Acu$ in a single leaf of
    $\Ac$, applying $h \inv$ shows that a leaf of $\Fcs$ intersects a leaf of
    $\Fcu$ in a single leaf of $\Fc$.
    Consequently, $f$ fixes all leaves of $\Fc$, and $h$ is a
    leaf conjugacy between $f$ and the time-one map $(v,t) \mapsto (v,t+1)$ of
    the lifted Anosov flow on $\tM$.

    This quotients down to a leaf conjugacy on the compact manifold $M_A$.
\end{proof}
This finishes the proof of leaf conjugacy for partially hyperbolic
diffeomorphisms which are homotopic to the identity (Proposition
\ref{proplist5}). Up to Proposition \ref{proplist3} which is proved in Appendix \ref{Apendix-FiniteQuotients} this completes the proof of the Main Theorem.

Further, the homeomorphism $h$ constructed in the last proof has several useful
properties listed here.

\begin{lemma} \label{niceh}
    There is a homeomorphism $h:\tM \to \tM$ such that
    \begin{itemize}
        \item
        $h(\cF^\sigma) = \cA^\sigma$ for $\sigma = c, cs, cu$,
        \item
        $h \circ \gamma = \gamma \circ h$ for all $\gamma \in G$,
        \item
        $\sup_{x \in \tM} d(x, h(x)) < \infty$,
        \item
        $h$ and $h \inv$ are uniformly continuous.\qed
    \end{itemize}  \end{lemma}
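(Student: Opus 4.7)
The plan is to take $h:\tM\to\tM$ to be the homeomorphism constructed in the proof of Proposition \ref{fconj}, which already establishes $h(\cF^\sigma)=\cA^\sigma$ for $\sigma=c,cs,cu$. The remaining three bullets follow by first giving an intrinsic characterization of $h$ at the level of center leaves, then reaping equivariance, bounded displacement, and uniform continuity in succession.

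The key intermediate step is the claim that for every $L\in\Fc$, the leaf $h(L)\in\Ac$ is characterized as the unique $\Ac$-leaf at finite Hausdorff distance from $L$. To prove this, I would write $L=\Lcs\cap\Lcu$ with unique $\Lcs\in\Fcs$ and $\Lcu\in\Fcu$; by Lemma \ref{uniqueCL} and its $cu$ counterpart, there are unique $\cLcs\in\Acs$ and $\cLcu\in\Acu$ with $\HD(\Lcs,\cLcs)<R$ and $\HD(\Lcu,\cLcu)<R$, and their intersection is exactly $h(L)$. Given $x\in L$, one invokes Lemma \ref{RR1} twice: first travel an unstable arc from $x$ of length at most $R_1$ to reach a point $x_1\in\cLcs$, then, using that $\dist(x_1,\cLcu)\le R+R_1$, travel a stable arc within $\cLcs$ of bounded length to land in $\cLcs\cap\cLcu$. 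This supplies a uniformly close point on $h(L)$ and, arguing symmetrically, gives $\HD(L,h(L))<\infty$. Uniqueness of $h(L)$ follows from the triangle inequality for Hausdorff distance combined with the fact that distinct $\Ac$-leaves are flow lines of the Anosov flow, hence diverge exponentially and lie at infinite Hausdorff distance from one another.

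With this characterization in hand, $G$-equivariance is automatic: for any $\gam\in G$ and any $L\in\Fc$, both $h(\gam(L))$ and $\gam(h(L))$ are $\Ac$-leaves at finite Hausdorff distance from $\gam(L)$, since $\gam$ is an isometry, so they coincide by uniqueness. Pointwise equivariance inside each leaf then follows because $h$ restricted to a center leaf is determined by matching $p_c$-values with $p_1$-values, and the cocycle identity $p_c(\gam y)-p_c(y)=p_1(\gam y)-p_1(y)$ from Lemma \ref{projc} ensures that both $\gam h$ and $h\gam$ satisfy the same matching condition. Once $h\circ\gam=\gam\circ h$ is established, the function $x\mapsto d(x,h(x))$ is $G$-invariant and so descends to a continuous function on the compact quotient $M_A$, which is therefore bounded. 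For uniform continuity, one notes that continuity of $h$ on a suitable compact neighborhood of a fundamental domain is transferred to uniform continuity on all of $\tM$ by equivariance (translates of a small ball in $\tM$ cover $\tM$ and $h$ commutes with the isometric $G$-action); the same reasoning applies to $h^{-1}$, which is also $G$-equivariant.

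The main obstacle is the first step: establishing that bounded Hausdorff distance characterizes the leaf correspondence without referring to the surface $\tilde S$. The delicate point is that the Hausdorff bounds given by Lemma \ref{uniqueCL} are at the two-dimensional level, and one must verify that the chained unstable/stable jumps truly land in the one-dimensional intersection $\cLcs\cap\cLcu$ with a uniform distance estimate. Once this geometric fact is pinned down, the rest of the lemma is essentially formal.
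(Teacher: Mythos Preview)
Your outline has a circularity at exactly the step you flag as the ``key intermediate step,'' though not for the reason you give.  You assert that the intersection $\cLcs\cap\cLcu$ (with $\cLcs,\cLcu$ the $\Acs,\Acu$ leaves at Hausdorff distance $<R$ from $\Lcs,\Lcu$) is \emph{exactly} $h(L)$.  But Proposition~\ref{fconj} only proves $h(\Fcs)=\Acs$ as foliations; it does not identify which $\Acs$ leaf $h(\Lcs)$ is.  To conclude $h(\Lcs)=\cLcs$ you would need $\HD(\Lcs,h(\Lcs))<\infty$, which essentially is bounded displacement of $h$ --- the very thing you are trying to deduce.  The chained unstable/stable jumps you worry about at the end are fine; the unjustified link is the identification of $h(L)$ with $\cLcs\cap\cLcu$ in the first place.

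The paper's route (implicit in the $\qed$) is the reverse of yours: one verifies $h\circ\gamma=\gamma\circ h$ directly from the construction, and then bounded displacement and uniform continuity drop out by compactness of $M_A$.  Concretely, $G_0$-equivariance comes from the $\bbZ^2$-equivariance of the Lewowicz conjugacy $H$ on $\tilde S$ together with the cocycle identity for $p_c$ in Lemma~\ref{projc}; $\gamma_3$-equivariance comes from the defining relation $H\circ\psi=A\circ H$, since $\psi$ was built from $\gamma_3^{-1}$ and sliding along center leaves.  Once $G$-equivariance is in hand, $x\mapsto d(x,h(x))$ descends to $M_A$ and is bounded, which \emph{then} gives $h(\Lcs)=\cLcs$ via Lemma~\ref{uniqueCL}, and your Hausdorff characterization becomes a corollary rather than a lemma.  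If you want to keep your order of argument, you need an independent reason why $H$ (and hence $h$ restricted to $\tilde S$) has bounded displacement; this follows because $H$ is a lift to universal covers of a homeomorphism between compact surfaces, but you should say so explicitly.
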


To end the section, we use these properties to prove Global Product Structure,
a property of the foliations which does not immediately follow from the leaf
conjugacy.
%by presenting some further properties of the leaf conjugacy as well as some other topological properties of the strong foliations which do not follow directly from the leaf conjugacy.
%
%This allows us to prove a property of the foliations which does not
%immediately follow from the leaf conjugacy.

\begin{prop} \label{solvgps}
    $f$ has Global Product Structure.
    That is, for $x,y \in \tM$
    \begin{enumerate}
        \item
        $\Fcs(x)$ intersects $\Wu(y)$ in a unique point,
        \item
        $\Fcu(x)$ intersects $\Ws(y)$ in a unique point,
        \item
        if $x \in \Fcs(y)$, then
        $\Fc(x)$ intersects $\Ws(y)$ in a unique point,
        \item
        if $x \in \Fcu(y)$, then
        $\Fc(x)$ intersects $\Wu(y)$ in a unique point.
    \end{enumerate}  \end{prop}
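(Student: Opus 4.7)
The plan is to deduce Global Product Structure from two sub-claims: (A) $\Fcs(x) \cap \Fcu(y) = \Fc(z)$ for some unique center leaf $\Fc(z)$, and (B) within each two-dimensional leaf $\Fcu(y) \approx \bbR^2$ the transverse pair $(\Fc|_{\Fcu(y)}, \Wu|_{\Fcu(y)})$ has GPS, and analogously for $(\Fc|_{\Fcs(y)}, \Ws|_{\Fcs(y)})$ inside each $\Fcs$ leaf. Granting these, (B) gives items (3) and (4) directly since both curves in question lie in a common two-dimensional leaf, and then (A) reduces items (1) and (2) to (B): indeed, $\Fcs(x) \cap \Wu(y) = \Fc(z) \cap \Wu(y)$ is a single point by item (4). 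Uniqueness in (1) and (2) follows from Proposition \ref{csuint} and its $cu$-analogue; uniqueness in (3) then follows from (2), since if $a \neq b$ both belonged to $\Fc(x) \cap \Ws(y)$, then $\Fcu(a) = \Fcu(x) = \Fcu(b)$ would meet $\Ws(y)$ in two distinct points. Item (4) is symmetric.

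Since $h(\Fcs(x)) = \Acs(h(x))$, $h(\Fcu(y)) = \Acu(h(y))$ and $h(\Fc) = \Ac$ by Lemma \ref{niceh}, claim (A) reduces to the same statement for the model foliations defined in Section \ref{Section-SolvAndModel}, which is a direct linear-algebra computation: the stable and unstable eigenspaces of $A$ span $\bbR^2$, so any leaf of $\Acs$ meets any leaf of $\Acu$ in a single affine line parallel to the $t$-axis, i.e.\ in exactly one leaf of $\Ac$.

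For (B), I would use $h$ to coordinatize $\Fcu(y)$ by $(a,t) \in \bbR^2$ so that $\Fc|_{\Fcu(y)}$ corresponds to the vertical foliation $\{a = \text{const}\}$. Each unstable leaf $\Wu(y')$ in $\Fcu(y)$ then becomes, via $h$, a properly embedded curve in $\Acu(h(y))$ transverse to the vertical foliation, hence the graph of a function $t = \tau(a)$ on an open interval $I \subset \bbR$; GPS is equivalent to $I = \bbR$. I expect this to be the main obstacle. If $I$ were bounded, say above by $a_+ < \infty$, then properness of $\Wu(y')$ forces $\tau(a) \to \pm\infty$ as $a \to a_+$, and combining the bounded distance of $h$ from the identity (Lemma \ref{niceh}) with the $1$-Lipschitz bound on $p_1$ (Lemma \ref{plip}) one obtains a sequence of points on $\Wu(y')$ along which $p_1 \to \pm\infty$. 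To contradict this, I would exploit that $p_1$ is constant along every leaf of $\Au$ in the model and combine the almost-parallel relation of Proposition \ref{Prop-AlmostParallelToCorrect} with the exponential expansion of $f$ along $\Wu$ together with an iteration argument in the spirit of Lemma \ref{rightbound}, in order to show that $p_1$ remains uniformly bounded along every unstable leaf of $f$. The $\Fcs$-version of (B) is then handled symmetrically by applying the same argument to $f^{-1}$.
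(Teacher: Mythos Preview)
Your overall reduction agrees with the paper: uniqueness via Proposition \ref{csuint} and its $cu$-analogue, claim (A) via the leaf conjugacy $h$ of Lemma \ref{niceh}, and the observation that (1)--(2) follow from (3)--(4) once (A) is known. The difficulty is entirely in (B), and there your sketch has a genuine gap.

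The sufficient condition you isolate---that $p_1$ is uniformly bounded along every unstable leaf of $f$---is not established by the ingredients you list, and it is not clear it is even true. Proposition \ref{Prop-AlmostParallelToCorrect} compares the two-dimensional foliations $\Fcu$ and $\Acu$; it says nothing about how the one-dimensional $\Wu$ sits relative to $\Au$ inside a $\Fcu$-leaf, so the fact that $p_1$ is constant along $\Au$ does not transfer. And an iteration ``in the spirit of Lemma \ref{rightbound}'' cannot close the argument: that lemma only gives the one-sided bound $p_1(f^{-n}(x)) < p_1(x)+K$, whereas the companion inequality $p_1(x) < p_1(f^{-n}(x))+K$ is outright false by Lemma \ref{toinfty} and Proposition \ref{rightmove}. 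Since the number of backward iterates needed to shrink a long unstable arc to unit size grows with the arc, this route gives no control on $p_1(y)-p_1(x)$ for $y\in\Wu(x)$.

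The paper proves (3) (and symmetrically (4)) by a short dynamical argument that bypasses any global statement about $p_1$ on $\Wu$- or $\Ws$-leaves. Fix $L\in\Fc$ inside $\Fcs(x)$. Since $f$ fixes every center leaf, $h(f^n(x))$ stays on the fixed $\Ac$-leaf $\Ac(h(x))$, and by Lemmas \ref{plip}, \ref{toinfty}, \ref{niceh} one has $p_1(h(f^n(x)))\to+\infty$. In the model plane $\Acs(h(x))$ distinct $\Ac$-leaves approach one another as $p_1\to+\infty$, so $\dist(h(f^n(x)),h(L))\to 0$; by uniform continuity of $h^{-1}$, $\dist(f^n(x),L)\to 0$. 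For large $n$ local product structure gives $\Ws(f^n(x))\cap L\neq\varnothing$, and applying $f^{-n}$ (which fixes $L$) yields $\Ws(x)\cap L\neq\varnothing$. The key idea you are missing is to use the dynamics to bring the point and the target center leaf close together and then invoke \emph{local} product structure, rather than trying to prove a global geometric property of individual strong leaves.
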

\begin{proof}
    The uniqueness of the intersections follows from Proposition
    \ref{csuint} (and its analogous statement for $\Fcu$ and $\Ws$).
    Therefore, it is enough to prove existence.
    As each leaf of $\Fcs$ intersects each leaf of $\Fcu$ in
    a single leaf of $\Fc$,
    items (1) and (2) will follow from (3) and (4).

    To prove existence of the intersection in (3), suppose $L \in \Fc$, $L
    \subset \Lcs \in \Fcs$ and $x \in \Lcs$.
    Then $h(f^n(x))$ is on the leaf $\Ac(h(x))$ for all $n$,
    and by Lemmas \ref{plip}, \ref{toinfty}, and \ref{niceh}
    \[
        \lim_{n \to \infty} p_1(h(f^n(x))) = +\infty.
    \]
    By the definition of $\Ac$ and $\Acs$, $\lim_{n \to \infty} \dist(h(f^n(x)), \cL)
    = 0$ for any leaf $\cL \in \Ac$ where $\cL \subset \Acs(h(x))$.
    In particular, if $\cL = h(L)$, then by the uniform continuity of $h \inv$,
    $\lim_{n \to \infty} \dist(f^n(x), L) = 0$.
    Consequently, there is $n$ such that $f^n(x)$ is close enough to $L$ that
    $\Ws(f^n(x))$ intersects $L$.
    Applying $f^{-n}$ shows that $\Ws(x)$ intersects $L$ and (3) is proved.
    The proof of (4) is analogous.
\end{proof}

%%%%%%%%%%%%%%%%%%%%%%%%%%%%%%%%%%%%%%%%%%%%%%%%%%%%%%%%%%%%%%%%%%%%%%5
\appendix

\section{Quotients of partially hyperbolic diffeomorphisms}\label{Apendix-FiniteQuotients}

This appendix studies the algebraic properties of partially hyperbolic systems
on solvmanifolds
and is divided into three subsections.
The first classifies partially hyperbolic systems on suspension manifolds when
they are not homotopic to the identity and gives the proof of Proposition
\ref{proplist3}.
The second subsection proves Proposition \ref{proplist2}.
The last subsection classifies partial hyperbolicity on quotients of the
3-torus and other nilmanifolds.

\subsection{Algebra and leaf conjugacies} \label{sec-solvalg} %{{{1

Since $\tilde M_A$ is homeomorphic to $\RR^3$, if a diffeomorphism on $M_A$ is not homotopic to the identity, then it will induce
a non-trivial isomorphism of $G = \pi_1(M_A)$.  We first study these
isomorphisms and their corresponding algebraic maps on $\tM$.

Recall a point of $\tM$ can be written as $(x,t)$ with $x \in \bbR^2$ and $t \in
\bbR$.
For $z \in \bbZ^2$, let $\alpha_z$ denote the map
$\alpha_z(x,t) = (x+z,t).$
Then,
$G_0 = \{ \alpha_z : z \in \bbZ^2 \}$ is a normal subgroup of $G$, and
$G$ is generated by elements of $G_0$ together with the deck transformation
$\gam_3(x,t) = (A \inv x, t+1)$.

\begin{lemma} \label{autG}
    For any isomorphism $\phi:G \to G$ there are $B \in GL(2,\bbZ)$,
    $v \in \bbZ^2$, and $e \in \{+1,-1\}$ such that
    $\phi(\alpha_z) = \alpha_{B z}$ for $z \in \bbZ$ and
    $\phi(\gamma_3) = \alpha_v \gamma_3^e.$
    Moreover, $A^e B = B A$ and $B^j = \pm A^k$ for some $j,k \in \bbZ$ with
    $j > 0$.
\end{lemma}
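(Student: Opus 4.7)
The plan is to exploit the concrete description $G = \ZZ^2 \rtimes_A \ZZ$ together with the defining relation $\gamma_3 \alpha_z \gamma_3^{-1} = \alpha_{A^{-1}z}$, which one checks directly from the formulas for $\gamma_3$ and $\alpha_z$. The argument has four steps.

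First, I would show that $G_0$ is a characteristic subgroup (so $\phi(G_0) = G_0$) by identifying it intrinsically as the preimage in $G$ of the torsion part of the abelianization $G^{\mathrm{ab}}$. Indeed, the relation above gives $[\gamma_3, \alpha_z] = \alpha_{(A^{-1}-I)z}$, hence $[G,G] = (A^{-1}-I)\ZZ^2 \subseteq G_0$; since $A$ is hyperbolic, $\det(A^{-1}-I) \ne 0$, so $G_0/[G,G]$ is finite. The quotient $G/G_0 \cong \ZZ$ is torsion-free, so the torsion of $G^{\mathrm{ab}}$ is exactly $G_0/[G,G]$ and $G_0$ is its preimage, a purely group-theoretic description.

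Once $\phi(G_0) = G_0$, the restriction $\phi|_{G_0}$ is an automorphism of $\ZZ^2$, giving $B \in GL(2,\ZZ)$ with $\phi(\alpha_z) = \alpha_{Bz}$. The induced automorphism of $G/G_0 \cong \ZZ$ must be multiplication by some $e \in \{\pm 1\}$, so $\phi(\gamma_3) \in \gamma_3^e G_0$ which produces a $v \in \ZZ^2$ with $\phi(\gamma_3) = \alpha_v \gamma_3^e$. Applying $\phi$ to the conjugation relation and computing the left-hand side as $(\alpha_v\gamma_3^e)\alpha_{Bz}(\alpha_v\gamma_3^e)^{-1} = \alpha_{A^{-e}Bz}$ while the right-hand side is $\alpha_{BA^{-1}z}$ yields $A^{-e}B = BA^{-1}$, i.e.\ $A^e B = BA$.

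For the commensurability $B^j = \pm A^k$, I would first show $B^2$ commutes with $A$. If $e = 1$ this is immediate; if $e = -1$ the relation $BAB^{-1} = A^{-1}$ gives $B^2 A B^{-2} = (BAB^{-1})^{-1}\cdot \text{(conjugated back)} = A$, so $B^2A = AB^2$. Now both $A$ and $B^2$ lie in the centralizer $Z \subset GL(2,\ZZ)$ of $A$. Since $A$ is hyperbolic with irrational eigenvalues, $\QQ[A]$ is a real quadratic field and the centralizer of $A$ in $M_2(\QQ)$ is the $2$-dimensional commutative algebra $\QQ[A]$; intersecting with $M_2(\ZZ)$ gives an order $\mathcal{O} \subset \QQ[A]$, and $Z = \mathcal{O}^\times$. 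By Dirichlet's unit theorem, $\mathcal{O}^\times \cong \{\pm I\} \times \ZZ$, so modulo the torsion $\{\pm I\}$ this group has rank $1$. Since $A$ has infinite order in it, any other element $B^2$ must satisfy $(B^2)^m = \pm A^n$ for some $m > 0$, giving $B^j = \pm A^k$ with $j = 2m > 0$.

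The only substantive ingredient is the last step, which rests on the classical fact that the centralizer of a hyperbolic element of $GL(2,\ZZ)$ is, modulo $\pm I$, an infinite cyclic group — a consequence of Dirichlet's unit theorem for orders in real quadratic fields. Steps 1–3 are routine bookkeeping in the semidirect product using the single defining relation.
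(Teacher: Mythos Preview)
Your proof is correct and is essentially a self-contained version of what the paper only cites: the paper defers the main statement to the appendix of Ghys--Sergiescu and the commensurability $B^j=\pm A^k$ to the classical fact (stated there as Lemma~\ref{baake}, with a reference to Baake--Roberts) that the centralizer of a hyperbolic element of $GL(2,\bbZ)$ is $\{\pm A_0^k\}$, which is exactly what you prove via Dirichlet's unit theorem for orders in $\bbQ[A]$. One point you make explicit that the paper glosses over is the $e=-1$ case, where $B$ itself does not commute with $A$; your passage to $B^2$ cleanly reduces this to the centralizer statement.
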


See the appendix of \cite{ghys1980stabilite} for a proof of the
main statement of the lemma.
The fact that $B^j = \pm A^k$ is a consequence of the following classical
result.

\begin{lemma} \label{baake}
    If $A \in GL(2,\bbZ)$ is hyperbolic, then there is $A_0 \in GL(2,\bbZ)$ such
    that $B A = A B$ if and only if $B$ is of the form $\pm A_0^k$ for some
    $k \in \bbZ$.
\end{lemma}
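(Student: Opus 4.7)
The plan is to identify the centralizer of $A$ in $GL(2,\ZZ)$ with the unit group of an order in a real quadratic number field, and then invoke Dirichlet's unit theorem to conclude that this unit group is cyclic modulo $\pm 1$.

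Since $A \in GL(2,\ZZ)$ is hyperbolic, its characteristic polynomial $x^2 - \operatorname{tr}(A) x + \det(A)$ has two distinct real roots $\lambda, \lambda'$ with $|\lambda| > 1 > |\lambda'|$, and these are necessarily irrational. Thus $K := \QQ(\lambda)$ is a real quadratic number field, and the $\QQ$-subalgebra $\QQ[A] \subset M_2(\QQ)$ is isomorphic to $K$ via $A \mapsto \lambda$. Because $A$ has two distinct eigenvalues, a standard linear algebra argument (simultaneous diagonalization over $\RR$, then restriction of scalars to $\QQ$) shows that any $B \in M_2(\QQ)$ commuting with $A$ is automatically a $\QQ$-linear combination of $I$ and $A$, i.e., lies in $\QQ[A]$.

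Next I would let $\cO$ denote the set of matrices in $M_2(\ZZ)$ commuting with $A$. By the previous paragraph, under the isomorphism $\QQ[A] \cong K$ this set corresponds to a subring of $K$ which contains $\ZZ[\lambda]$ and is a finitely generated $\ZZ$-submodule of $K$, hence an order in $K$. The matrices $B \in GL(2,\ZZ)$ commuting with $A$ are precisely the units of $\cO$. Applying Dirichlet's unit theorem to an order in a real quadratic field yields $\cO^\times \cong \{\pm 1\} \times \ZZ$ (the roots of unity in $K$ are only $\pm 1$, and there are two archimedean places). Choosing $A_0 \in GL(2,\ZZ)$ corresponding to a fundamental unit of $\cO$, every element of $\cO^\times$ takes the form $\pm A_0^k$ for some $k \in \ZZ$, which is exactly the conclusion.

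The main obstacle is the bookkeeping in the middle paragraph: one must verify cleanly that the integer centralizer really is a full order in $K$ (it is a subring of $K$ that is a finitely generated $\ZZ$-module and contains $\ZZ[\lambda]$, so it has rank $2$) rather than something smaller, and one must cite the version of Dirichlet's theorem valid for non-maximal orders. Everything else—the irrationality of the eigenvalues, the description of the $\QQ$-centralizer, and the structure of units in a real quadratic order—is entirely standard.
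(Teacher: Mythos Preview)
Your argument is correct. The paper does not actually prove this lemma; it simply cites \cite{baake1997} and calls the result classical. Your sketch via the identification of the integer centralizer with an order in the real quadratic field $\QQ(\lambda)$ and Dirichlet's unit theorem is the standard route and is essentially what one finds in the cited reference or any treatment of the symmetry groups of hyperbolic toral automorphisms. The points you flag as bookkeeping are genuine but easy: the integer centralizer contains $\ZZ[A]\cong\ZZ[\lambda]$ and sits inside $M_2(\ZZ)$, so it is a rank-two $\ZZ$-module and hence an order; and the version of Dirichlet's theorem for non-maximal orders follows from the maximal case since $\cO^\times$ has finite index in $\cO_K^\times$. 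Nothing is missing.
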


See, for instance, \cite{baake1997} for a proof.

Due to Lemma \ref{autG}, every automorphism of $G$ is induced by an algebraic
map on $\tM$.

\begin{lemma} \label{algmap}
    For any automorphism $\phi:G \to G$ there are unique $B \in GL(2,\bbZ)$,
    $w \in \bbR^2$, and $e \in \{+1,-1\}$ such that the diffeomorphism
    \[
        \Phi:\tM \to \tM, \quad (x,t) \mapsto (Bx+w, e t)
    \]
    satisfies $\Phi \circ \gamma = \phi(\gamma) \circ \Phi$ for all $\gamma \in G$.
    As such, $\Phi$ quotients to a map $\Phi_0:M_A \to M_A$.
    If $e=+1$, then $\Phi(\Acs)=\Acs$ and $\Phi(\Acu)=\Acu$.
    If $e=-1$, then $\Phi(\Acs)=\Acu$ and $\Phi(\Acu)=\Acs$.
\end{lemma}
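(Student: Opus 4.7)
The plan is to use Lemma \ref{autG} to extract the data $(B, v, e)$ attached to $\phi$, postulate the affine diffeomorphism $\Phi(x,t) = (Bx + w, et)$ with $w \in \bbR^2$ still to be determined, and then choose $w$ so that $\Phi$ implements $\phi$ on the deck group. Since $G$ is generated by $G_0 = \{\alpha_z : z \in \bbZ^2\}$ together with $\gamma_3$, it is enough to verify $\Phi \circ \gamma = \phi(\gamma) \circ \Phi$ on these generators.

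First, for $z \in \bbZ^2$ one has $\Phi(\alpha_z(x,t)) = (Bx + Bz + w, et)$ and $\phi(\alpha_z) \Phi(x,t) = \alpha_{Bz}(Bx + w, et) = (Bx + w + Bz, et)$, so the $\alpha_z$-relations hold automatically for any $w$. Next, compute
\[
\Phi(\gamma_3(x,t)) = \Phi(A\inv x, t+1) = (B A\inv x + w,\ e(t+1)),
\]
and, since $\gamma_3^e(y,s) = (A^{-e}y, s+e)$,
\[
\phi(\gamma_3) \Phi(x,t) = \alpha_v \gamma_3^e (Bx+w, et) = (A^{-e}(Bx+w) + v,\ e(t+1)).
\]
The $t$-coordinates agree automatically. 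Matching the $\bbR^2$-coordinates and using the relation $A^e B = BA$ from Lemma \ref{autG} (which gives $B A\inv = A^{-e} B$), the terms linear in $x$ cancel and one is left with the single affine equation $w - A^{-e} w = v$, i.e. $(I - A^{-e}) w = v$. Because $A$ is hyperbolic, $1$ is not an eigenvalue of $A^{\pm 1}$, so $I - A^{-e}$ is invertible and $w := (I - A^{-e})\inv v$ is the unique solution. This $w$ makes $\Phi$ equivariant on generators and therefore on all of $G$.

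Uniqueness of $(B, e, w)$ is now easy: the equivariance with $\alpha_z$ forces the linear part of $\Phi$ on $\bbR^2$ to send $z \mapsto Bz$ for all $z \in \bbZ^2$, pinning down $B$; the action on the $t$-coordinate forced by the $\gamma_3$-relation pins down $e$; and we just saw $w$ is the unique solution to $(I - A^{-e})w = v$. Since $\Phi$ conjugates the $G$-action via the automorphism $\phi$, it sends $G$-orbits to $G$-orbits and therefore descends to a diffeomorphism $\Phi_0 : M_A \to M_A$.

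Finally, for the foliations, recall that a leaf of $\Acs$ is of the form $\{(v_0 + u, t) : u \in E^s,\ t \in \bbR\}$ where $E^s$ is the stable eigenspace of $A$, and similarly for $\Acu$ with the unstable eigenspace $E^u$. Applying $\Phi$ produces the set $\{(Bv_0 + w + Bu, et) : u \in E^s,\ t \in \bbR\} = \{(Bv_0 + w + u', s) : u' \in B E^s,\ s \in \bbR\}$. From $A^e B = BA$ one checks that if $e = +1$ then $AB = BA$, so $B$ preserves both eigenspaces of $A$ and $B E^s = E^s$, $B E^u = E^u$; if $e = -1$ then $A\inv B = BA$, which forces $B$ to exchange the $\lambda$- and $\lambda\inv$-eigenspaces, so $BE^s = E^u$ and $BE^u = E^s$. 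This is exactly the claimed behavior on $\Acs$ and $\Acu$. The only non-routine point in the argument is the solvability of $(I - A^{-e})w = v$, which is the sole place where hyperbolicity of $A$ is used in an essential way.
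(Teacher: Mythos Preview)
Your proof is correct and follows the same approach as the paper: extract $(B,v,e)$ from Lemma~\ref{autG}, solve $(I-A^{-e})w=v$ using hyperbolicity of $A$, verify equivariance on the generators $\alpha_z$ and $\gamma_3$, and deduce the action on eigenspaces from the commutation relation $A^eB=BA$. The paper's proof merely says ``one can verify directly'' where you actually carry out the computation, so your version is a fleshed-out form of the same argument.
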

\begin{proof}
    Let $B, v, e$ be as in Lemma \ref{autG}.
    Let $w \in \bbR^2$ solve the equation $A^{-e} w + v = w$.
    Such a solution exists and is unique as $A$ is hyperbolic.
    Then, one can verify directly that
    $\Phi \circ \gamma = \phi(\gamma) \circ \Phi$.
    If $A B = B A$, then $B$ preserves the eigenspaces of $A$.
    If $A \inv B = B A$, then $B$ maps an eigenspace $E_\lam$ of $A$ to the
    eigenspace $E_{\lam \inv}$.
    From this, the claims about $\Phi(\Acs)$ and $\Phi(\Acu)$ follow.
\end{proof}
\begin{lemma} \label{nicealgmap}
    For any such $\Phi$, there is an iterate $n > 0$ which is of the form
    \[
        \Phi^n(x,t) = (A^m x + z, t)
    \]
    where $m \in \bbZ$ and $z \in \bbZ^2$.
\end{lemma}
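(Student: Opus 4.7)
The plan is to reduce the problem to showing that some positive iterate $\phi^n$ of the automorphism $\phi \in \mathrm{Aut}(G)$ induced by $\Phi$ lies in $\mathrm{Inn}(G)$, where $G = \pi_1(M_A) \cong \bbZ^2 \rtimes_A \bbZ$. Indeed, if $\phi^n$ is conjugation by $h = \alpha_u \gamma_3^k \in G$, then using the relation $\gamma_3 \alpha_z \gamma_3^{-1} = \alpha_{A^{-1} z}$ one computes $\phi^n(\alpha_z) = \alpha_{A^{-k} z}$ and $\phi^n(\gamma_3) = \alpha_{(I - A^{-1}) u} \gamma_3$. Feeding the parameters $B = A^{-k}$, $v = (I - A^{-1}) u$, $e = 1$ into Lemma \ref{algmap} produces $w = (I - A^{-1})^{-1}(I - A^{-1}) u = u \in \bbZ^2$, so by uniqueness $\Phi^n(x,t) = (A^{-k} x + u, t)$, which is exactly the desired form with $m = -k$ and $z = u$.

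To produce such an $n$, I would argue that $\mathrm{Out}(G) := \mathrm{Aut}(G)/\mathrm{Inn}(G)$ is finite; then the image of $\phi$ in $\mathrm{Out}(G)$ has finite order and some $\phi^n$ must be inner. By Lemma \ref{autG}, every automorphism corresponds to a triple $(B, v, e)$ with $B \in GL(2,\bbZ)$ satisfying $A^e B = BA$, $v \in \bbZ^2$, and $e \in \{\pm 1\}$, while the inner automorphism by $\alpha_u \gamma_3^k$ corresponds to the triple $(A^{-k}, (I - A^{-1}) u, 1)$. Hence modulo $\mathrm{Inn}(G)$, $e$ takes at most two values, $B$ lies in the centralizer (for $e = 1$) or the anti-centralizer (for $e = -1$) of $A$ in $GL(2,\bbZ)$ modulo the cyclic subgroup $\langle A \rangle$, and $v$ lies in $\bbZ^2/(I - A^{-1})\bbZ^2$.

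Each of these quotients is finite: by Lemma \ref{baake} the centralizer of $A$ equals $\{\pm A_0^k : k \in \bbZ\}$ for some $A_0$, and $A$ itself is a power of $A_0$ up to sign, so $\langle A\rangle$ has finite index there; the anti-centralizer is either empty or a single coset of the centralizer and is handled identically; and $|\bbZ^2/(I - A^{-1})\bbZ^2| = |\det(I - A^{-1})|$ is finite and nonzero because $A$ is hyperbolic, so $1$ is not an eigenvalue of $A^{-1}$. Combining these three finiteness statements gives $|\mathrm{Out}(G)| < \infty$ and completes the proof.

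The main subtlety to verify is that these three residues really descend to well-defined invariants on $\mathrm{Out}(G)$, i.e., that the parameterization of Lemma \ref{autG} behaves multiplicatively in a way compatible with the inner subgroup computed above, so that bounding each coordinate bounds the order of $\phi$ in $\mathrm{Out}(G)$. This is a routine semidirect-product calculation using the defining relation $\gamma_3 \alpha_z \gamma_3^{-1} = \alpha_{A^{-1}z}$, but it is the only point where care is required.
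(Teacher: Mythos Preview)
Your argument is correct and takes a genuinely different route from the paper. The paper works directly with the explicit form $\Phi(x,t)=(Bx+w,et)$: it observes that the equation $A^{-e}w+v=w$ has integer coefficients, so $w\in\bbQ^2$; then the affine map $x\mapsto Bx+w$ permutes the finite set $(\bbZ/q)^2$ for an appropriate $q$, and some iterate $\ell$ fixes the class of $0$, forcing the translation part of $\Phi^\ell$ into $\bbZ^2$. Combined with $B^j=\pm A^k$, the iterate $\Phi^{2j\ell}$ has the required form.

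Your approach instead passes through the group-theoretic statement that $\mathrm{Out}(G)$ is finite, and then identifies inner automorphisms with exactly the $\Phi$ of the desired shape. This is more conceptual and would transfer to other settings where one knows finiteness of $\mathrm{Out}$ abstractly. The paper's argument is shorter and entirely elementary --- it never names $\mathrm{Out}(G)$ and avoids the semidirect-product bookkeeping you flag at the end. That bookkeeping is genuine: the three ``residues'' $(e,B,v)$ do not individually descend to $\mathrm{Out}(G)$, so to make your finiteness bound rigorous you should phrase it as the exact sequence
\[
1 \to \bbZ^2/(I-A^{-1})\bbZ^2 \to \mathrm{Out}(G) \to \{(B,e):A^eB=BA\}/\langle(A,1)\rangle \to 1,
\]
with both ends finite. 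Once stated this way, the verification is indeed routine.
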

\begin{proof}
    The equation $A^{-e} w + v = w$ from the last proof can be written as a
    system of linear equations with integer coefficients and can therefore be
    solved over $\bbQ$.
    This means that $x \mapsto Bx+w$ defines a permutation of
    a finite set of the form $\bbZ/q \times \bbZ/q$ for some $q$.
    As some iterate $\ell$ of this permutation fixes $(0,0)$ the corresponding
    function on $\bbR^2$ is of the form $x \mapsto B^\ell x + z$ for some $z \in
    \bbZ^2$.
    Suppose $B^j = \pm A^k$.
    Then, $\Phi^n$ for $n = 2 j \ell$ will have the desired form.
\end{proof}
\begin{cor} \label{algcor}
    If $\Phi_0:M_A \to M_A$ is an algebraic map as in Lemma \ref{algmap}, then there
    is $n  \ge  1$ such that $\Phi_0^n$ is homotopic to the identity.
\end{cor}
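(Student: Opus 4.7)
The plan is to use Lemma \ref{nicealgmap} to replace $\Phi$ by an iterate of the simplest possible form and then recognize its quotient as the time-$m$ map of the suspension Anosov flow on $M_A$, which is visibly isotopic to the identity through the flow itself.

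By Lemma \ref{nicealgmap}, fix $n \ge 1$ with
\[
\Phi^n(x,t) = (A^m x + z,\, t)
\]
for some $m \in \bbZ$ and $z \in \bbZ^2$. Using the formula $\gamma_3^m(x,t) = (A^{-m}x,\, t+m)$ that comes directly from the definition of $\gamma_3$ in Section \ref{Section-SolvAndModel}, one computes
\[
\gamma_3^m \circ \Phi^n(x,t) = (x + A^{-m}z,\, t+m).
\]
Because $A \in GL(2,\bbZ)$, the vector $A^{-m}z$ lies in $\bbZ^2$, so $\alpha_{-A^{-m}z} \in G_0 \subset G$. A second composition yields
\[
\alpha_{-A^{-m}z} \circ \gamma_3^m \circ \Phi^n(x,t) = (x,\, t+m) = \varphi_m(x,t),
\]
where $\varphi_m$ denotes the time-$m$ map of the flow $\varphi$ on $\tM$ from Section \ref{Section-SolvAndModel}.

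Set $\gamma := \alpha_{-A^{-m}z} \circ \gamma_3^m \in G$, so that $\varphi_m = \gamma \circ \Phi^n$. Since $\gamma$ is a deck transformation, $\Phi^n$ and $\varphi_m$ project to the same self-map of $M_A$; in other words, $\Phi_0^n$ coincides with the time-$m$ map of the suspension Anosov flow on $M_A$ (namely, the descent of $\varphi$). The path $s \mapsto (\varphi_s)_0$, for $s$ running from $0$ to $m$, is then an explicit isotopy on $M_A$ from the identity to $\Phi_0^n$, which in particular is a homotopy. There is no real obstacle beyond invoking Lemma \ref{nicealgmap}; the substance of the argument is exactly the short identity $\varphi_m = \gamma \circ \Phi^n$, forced by the normal form of $\Phi^n$ together with the explicit description of the generators $\alpha_z$ and $\gamma_3$ of $G$.
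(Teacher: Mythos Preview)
Your proof is correct and follows essentially the same approach as the paper: both invoke Lemma \ref{nicealgmap} and then identify $\Phi_0^n$ with the time-$m$ map of the suspension flow, which is manifestly homotopic to the identity. The only difference is cosmetic---the paper works directly on the quotient $\bbT^2 \times \bbR / {\sim}$ and uses the relation $(Ax,t)\sim(x,t+1)$ to write $(A^m x + z, t) = (x, t+m)$, while you carry out the equivalent computation on $\tM$ by composing with explicit deck transformations.
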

\begin{proof}
    One may write $M_A$ as the quotient $\bbT^2 \times \bbR /{\sim}$ where
    $(Ax, t) \sim (x, t+1)$.
    Then by Lemma \ref{nicealgmap}, $\Phi_0^n:M_A \to M_A$ can be written as
    $\Phi_0^n(x,t) = (A^m x + z, t) = (x, t+m)$
    which is clearly homotopic to the identity.
\end{proof}
From this, we can now prove Proposition \ref{proplist3}.

\begin{proof}
    [Proof of Proposition \ref{proplist3}]
    If $f:\tM \to \tM$ descends to $f_0:M_A \to M_A$, then by Lemma \ref{algmap},
    there is $\Phi$ descending to $\Phi_0$ such that $f$ and $\Phi$ induce the
    same automorphism of $\pi_1(M_A)$.  As $M_A$ is $K(\pi,1)$, Whitehead's
    theorem implies that $f_0$ and $\Phi_0$ are homotopic.
    By Corollary \ref{algcor}, there is $n$ such that $f_0^n$ is homotopic to
    the identity.
\end{proof}
If the $B$ in Lemma \ref{algmap} is hyperbolic, the corresponding $\Phi$ is
partially hyperbolic.  If $B = \pm Id$, one can compose $\Phi$ with a flow along
the center foliation $\Ac$ to produce a partially hyperbolic map.
In any case, we say a diffeomorphism $g:\tM \to \tM$
which preserves foliations $\Fcu,\Fcs,\Fc$ is leaf conjugate to $\Phi$
if there is a homeomorphism $h:\tM \to \tM$ such that $h(\Fsig)=\Asig$ and
\[
    h(f(L))=\Phi(h(L))
\]
for all $L \in \Fsig$ and $\sigma=c,cs,cu$.  If $g,\Phi,h$ quotient to maps
$g_0,\Phi_0,h_0$ on $M_A$, we say $g_0$ and $\Phi_0$ are leaf conjugate as well.

\begin{teo} \label{solvconj}
    Suppose $g_0:M_A \to M_A$ is partially hyperbolic and there is no periodic
    torus tangent to $\Ecs$ or $\Ecu$.
    Then $g_0$ is dynamically coherent and is leaf conjugate to an algebraic
    map $\Phi_0$ as given in Lemma \ref{algmap} with $e=+1$.
\end{teo}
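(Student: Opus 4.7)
The strategy is to bootstrap from Propositions \ref{proplist3}, \ref{proplist4}, and \ref{proplist5} by passing to an iterate homotopic to the identity, and then upgrade the resulting leaf conjugacy to one for $g_0$ itself using the algebraic structure provided by Lemma \ref{algmap}. By Proposition \ref{proplist3}, some iterate $g_0^n$ is homotopic to the identity. Passing to a finite cover to orient the bundles $\Es, \Ec, \Eu$ and replacing $n$ by a further iterate (exactly as in the proof of Proposition \ref{proplist4}), Lemma \ref{lemmadc} applied to $g_0^n$ provides unique invariant foliations tangent to $\Ecs$ and $\Ecu$. By uniqueness and $g_0$-invariance of these bundles, the foliations are automatically $g_0$-invariant, so $g_0$ is dynamically coherent; Proposition \ref{liftuniq} descends this back to $M_A$.

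To identify the algebraic model, consider the induced automorphism $\phi = (g_0)_\ast : \pi_1(M_A) \to \pi_1(M_A)$ and apply Lemma \ref{algmap} to obtain an algebraic diffeomorphism $\Phi_0 : M_A \to M_A$ with data $(B, w, e)$ realizing $\phi$. Since $M_A$ is aspherical, Whitehead's theorem gives a homotopy between $g_0$ and $\Phi_0$; choosing equivariant lifts to $\tM$ yields $\sup_{x \in \tM} d(g(x), \Phi(x)) < \infty$. The sign $e = +1$ should follow as follows: the analysis of Section \ref{Section-ModelCorrect} (via Propositions \ref{propmodel} and \ref{Prop-AlmostParallelToCorrect}) identifies each $\Fcs$-leaf with a unique $\Acs$-leaf and each $\Fcu$-leaf with a unique $\Acu$-leaf; since $g$ preserves each of $\Fcs$ and $\Fcu$ separately and sits at bounded distance from $\Phi$, the latter must also preserve $\Acs$ and $\Acu$ separately, which by Lemma \ref{algmap} forces $e = +1$ rather than the $e = -1$ case that swaps the two model foliations.

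Finally, applying Proposition \ref{proplist5} to $g_0^n$ produces a leaf conjugacy $h : \tM \to \tM$ satisfying the properties of Lemma \ref{niceh}, in particular $h(\cF^\sigma) = \cA^\sigma$ for $\sigma = c, cs, cu$ and $h$ equivariant under the deck group. I claim this same $h$ already realizes the leaf conjugacy between $g_0$ and $\Phi_0$. Indeed, $h \circ g \circ h\inv$ is $\phi$-equivariant, preserves $\Ac, \Acs, \Acu$, and sits at bounded distance from $\Phi$. Since each center leaf of $\Ac$ is uniquely determined by its projection to the base $\bbR^2$ of $\tM$, and since the bounded-distance condition together with the Global Product Structure of Proposition \ref{solvgps} forces two $\phi$-equivariant maps preserving $\Ac$ and at bounded distance to induce the same permutation of the center leaf space, we conclude $h(g(L)) = \Phi(h(L))$ for every $L \in \Fc$, which quotients down to the desired leaf conjugacy on $M_A$.

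The main obstacle I anticipate is the step ruling out $e = -1$: one must argue that the bounded-distance correspondence between $\Fcs$ and $\Acs$ (and likewise for $cu$) cannot tolerate $\Phi$ swapping the two model families, even though $g$ and $\Phi$ only agree up to bounded error and $h$ is only \emph{a priori} constructed for the iterate. A careful use of Lemma \ref{uniqueCL}, Corollary \ref{hdinfty}, and the fact that $g$-invariance of $\Fcs$ propagates through the canonical bijection $H$ of Proposition \ref{uniqfoln} should resolve this, but it is the delicate point where one genuinely uses partial hyperbolicity of $g_0$ (rather than just properties of the iterate).
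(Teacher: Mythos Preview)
Your approach is correct and matches the paper's in substance: lift, build the algebraic model $\Phi$ via Lemma \ref{algmap}, observe $\sup_x d(g(x),\Phi(x))<\infty$, pass to an iterate that is bounded distance from the identity on $\tM$, invoke Lemma \ref{niceh} to get $h$, and then argue that this same $h$ conjugates $g$ to $\Phi$ at the level of leaves.

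Two points of streamlining compared to the paper. First, the paper does not treat $e=-1$ as an obstacle to be ruled out in advance; it simply establishes $h(g(L))=\Phi(h(L))$ for every $L\in\Fcs$ directly from the bounded-distance inequality and Corollary \ref{hdinfty} (distinct $\Acs$ leaves have infinite Hausdorff distance), and \emph{then} reads off $e=+1$ from the fact that $\Phi(\Acs)=\Acs$. Your worry in the last paragraph is therefore misplaced: $e=+1$ is a consequence, not a prerequisite. Second, your invocation of Global Product Structure (Proposition \ref{solvgps}) is unnecessary; the leaf-level equality $h(g(L))=\Phi(h(L))$ follows immediately from $\sup_x d(hg(x),\Phi h(x))<\infty$ together with the fact that distinct leaves of $\Acs$ (respectively $\Acu$) are at infinite Hausdorff distance. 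The paper also handles the passage to the iterate slightly more concretely, writing $f=\hat\gamma\circ g^n$ with $\hat\gamma\in G$ chosen via Lemma \ref{nicealgmap} so that $f$ is literally bounded distance from the identity on $\tM$, rather than appealing to Proposition \ref{proplist5} as a black box; this is the same content as your ``apply Proposition \ref{proplist5} to $g_0^n$'' but makes the use of Lemma \ref{niceh} transparent.
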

\begin{proof}
    Dynamical coherence follows from Proposition \ref{proplist4}, so we need
    only establish the leaf conjugacy.
    Choose a lift $g:\tM \to \tM$ of $g_0$.
    This defines an isomorphism $g_*=\phi:G \to G$ by
    \[
        g \circ \gamma = \phi(\gamma) \circ g.
    \]
    Let $\Phi$ be the corresponding algebraic map given by Lemma \ref{algmap}.
    Note that the function $x \mapsto d(g(x), \Phi(x))$ quotients down to $M_A$
    and is therefore bounded.
    This shows
    \[
        \sup_{x \in \tM} d(g(x), \Phi(x)) < \infty.
    \]
    By Lemma \ref{nicealgmap}, there is $n$ such that
    $\Phi^n(v,t) = (A^k v + z, t)$
    for some $k$ and $z$.
    Define $\hat \gam = \gam_3^k \circ \alpha_z$, $f = \hat \gam \circ g^n$, and
    $\Phi_f = \hat \gam \circ \Phi^n$.
    One can check from the definitions of $\alpha_z$ and $\gam_3$ that
    $\Phi_f(v,t) = (v, t+k)$.
    From the definition of the metric on $\tM$, $\sup_x d(\Phi_f(x), x) < \infty$.
    In fact, the distance from $(v,t)$ to $(v,t+k)$ is exactly $k$.
    As $\hat \gam$ is an isometry, $\sup_x d(f(x), \Phi_f(x)) < \infty$ and so
    $\sup_x d(f(x), x) < \infty$ as well.
    Thus $f$ satisfies the assumptions in Section \ref{Sect-LeafConj}.

    Let $h:\tM \to \tM$ be the homeomorphism given by Lemma \ref{niceh}.
    Since $h$ is a finite distance from the identity,
    it follows that $\sup_x \dist(h(g(x)), \Phi(h(x))) < \infty$.
    For any leaf $L \in \Fcs$, $h(L)$ is a leaf of $\Acs$ and by Lemma
    \ref{uniqueCL},
    \[
        \HD(h(g(L)), \Phi(h(L))) < \infty \spacearrow h(g(L)) = \Phi(h(L)).
    \]
    The same reasoning applies to $\Fcu$ and shows that $h$ is a leaf conjugacy
    between $g$ and $\Phi$.
    As $g$ maps $\Fcs$ to $\Fcs$, it follows from the leaf conjugacy that $\Phi$
    maps $\Acs$ to $\Acs$, and therefore $e=1$ in the formula given in Lemma
    \ref{algmap}.
    As all of the maps descend to $M_A$, the result is proved.
\end{proof}
\subsection{Finite quotients of suspension manifolds} \label{sec-quotients} %{{{1

\begin{proof}
    [Proof of Proposition \ref{proplist2}.]
    Suppose $M$ is finitely covered by $M_A$.
    Without loss of generality (see the discussion at the end of Section
    \ref{SectionReductionSolvCase}), assume that this is a normal covering.
    By Lemma \ref{canlift}, an iterate of the partially hyperbolic
    diffeomorphism $f:M \to M$ lifts to $g:M_A \to M_A$ and, by Proposition
    \ref{proplist3}, we may assume that this lift $g$ is homotopic
    to the identity.

    If there is a $g$-periodic torus tangent to $\Ecs$ or $\Ecu$ in $M_A$, it
    would quotient to an $f$-periodic torus on $M$ and
    the results in
    \cite{HHU3} would imply that $M$ is a suspension manifold.
    Therefore, we may freely assume there is no such torus.

    Propositions \ref{proplist4} and \ref{proplist5}
    imply that $g$ is leaf conjugate to a suspension Anosov flow.
    As a consequence of Lemma \ref{projc}, there is a center
    foliation $\cF$ on $M_A$ and a map $p:M_A \to \bbR/\bbZ$, such that
    restricted to each leaf of $\cF$, $p$ is a $C^1$ covering.

    Let $H$ be the finite group of deck transformations $\tau:M_A \to M_A$
    associated to the covering $M_A \to M$.
    By Proposition \ref{liftuniq}, $\tau(\cF)=\cF$ for all $\tau \in H$.
    The space of leaves of $\cF$ is a non-Hausdorff space homeomorphic to
    $\bbT^2/A$.
    By the Lefschetz fixed point
    formula, for a homeomorphism $h:\bbT^2 \to \bbT^2$, either $h$ has a fixed
    point or $A \circ h$ has a fixed point (or both).  This implies that a deck
    transformation $\tau \in H$ fixes at least one leaf $L \in \cF$.
    If $\tau$ reversed the orientation of leaves, it would have a fixed point
    on $L$, which is impossible for a non-trivial deck transformation.  Thus,
    all $\tau \in H$ preserve the orientation of $\cF$.

    Define a map $\hat p : M_A \to \bbR/\bbZ$ by
    \[
        \hat p(x) = \sum_{\tau \in H} p(\tau(x)).
    \]
    Then, $\hat p \circ \tau = \hat p$ for all $\tau \in H$, the set $\hat p
    \inv(\{0\})$
    consists of $n$ disjoint tori where $n = |H|$, and each $\tau$ permutes
    these tori.
    Choose one torus and call it $S$.
    Starting at a point $x \in S$, flow forward along the leaf $\cF(x)$ until it
    intersects a torus of the form $\tau_1 S$ at a point $y$.
    Define $\psi(x)=\tau_1 \inv(y)$.  This defines a continuous map $\psi:S \to S$
    and the choice of $\tau_1$ is independent of $x$.

    Define $H_0 := \{ \tau \in H : \tau(S) = S \}$.
    Then, $\psi$ quotients to a function
    $\hat \psi: S / H_0 \to S / H_0$, and
    one can verify that $M$ is homeomorphic to the manifold defined as the
    suspension of $\hat \psi$.
    As $\psi^n$ is topologically conjugate to a power of $A$, $\hat \psi$ is
    expansive and therefore topologically conjugate to a hyperbolic
    toral automorphism \cite{Lew}.

    This shows that $M$ is homeomorphic to a suspension manifold.  As these
    manifolds are three-dimensional, they are diffeomorphic as well \cite{moise}.
\end{proof}

\subsection{Quotients of the torus and nilmanifolds} %{{{1

\begin{prop} \label{torushyp}
    Suppose $f$ is a homeomorphism on $\bbT^3$ such that the induced
    automorphism on $H_1(\bbT^3,\bbR)$ is hyperbolic.
    If $f$ descends to a homeomorphism of a finite quotient $\bbT^3 / \Gamma$,
    then this quotient is homeomorphic to the 3-torus.
\end{prop}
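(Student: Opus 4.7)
The strategy is to show that the hypothesis on $f_*$ forces the action of $\Gamma$ on $H_1(\bbT^3, \bbZ) = \bbZ^3$ to be trivial, which in turn forces $N := \bbT^3/\Gamma$ to be a torus.

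First, since $N$ is a manifold, the finite group $\Gamma$ acts freely on $\bbT^3$. Let $\Gamma_* \subset GL(3,\bbZ)$ denote the image of $\Gamma$ under the induced action on $H_1(\bbT^3,\bbZ)$. Because $f$ descends to $N$, for every $\gamma \in \Gamma$ there is $\gamma' \in \Gamma$ with $f\gamma = \gamma' f$; passing to $H_1$ shows that the hyperbolic $f_*$ normalizes the finite group $\Gamma_*$ in $GL(3,\bbZ)$. The key claim will be that $\Gamma_* = \{I\}$.

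To prove this, suppose $\gamma_* \in \Gamma_*$ is nontrivial. Conjugation by $f_*$ permutes the finite set $\Gamma_*$, so some iterate $f_*^k$ commutes with $\gamma_*$. Freeness of $\gamma$ on $\bbT^3$ combined with the Lefschetz fixed point theorem gives $\det(I - \gamma_*) = 0$, so $\gamma_*$ has $1$ as an eigenvalue and $V_1 := \ker(\gamma_* - I) \subset \bbR^3$ has dimension $1$ or $2$. Since $\gamma_*$ is semisimple (its minimal polynomial divides $x^{|\gamma|} - 1$), there is a $\gamma_*$-invariant complementary subspace $V'$, namely the sum of the other eigenspaces, with $\bbR^3 = V_1 \oplus V'$ and both summands defined over $\bbZ$. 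By commutativity, $f_*^k$ and $f_*^{-k}$ both preserve $V_1$, $V'$, and hence the sublattices $L_1 := V_1 \cap \bbZ^3$ and $L' := V' \cap \bbZ^3$; this forces $\det(f_*^k|_{L_1}), \det(f_*^k|_{L'}) \in \{\pm 1\}$. One of $V_1, V'$ has dimension $1$, and on that line $f_*^k$ acts as multiplication by $\pm 1$, exhibiting an eigenvalue of $f_*$ on the unit circle and contradicting hyperbolicity.

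Once $\Gamma_* = \{I\}$, every $\gamma \in \Gamma$ induces the identity on $\pi_1(\bbT^3) = \bbZ^3$, and the short exact sequence $1 \to \bbZ^3 \to \pi_1(N) \to \Gamma \to 1$ makes $\bbZ^3$ central in $\pi_1(N)$. Then $\pi_1(N)/Z(\pi_1(N))$ is finite, so by Schur's theorem $[\pi_1(N),\pi_1(N)]$ is finite; since $N$ has contractible universal cover $\bbR^3$, the group $\pi_1(N)$ is torsion-free, forcing the commutator subgroup to be trivial. Hence $\pi_1(N)$ is a finitely generated torsion-free abelian group containing $\bbZ^3$ of finite index, so $\pi_1(N) \cong \bbZ^3$, and the aspherical closed $3$-manifold $N$ with $\pi_1(N) = \bbZ^3$ is homeomorphic to $\bbT^3$. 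The main obstacle is the eigenvalue argument: extracting, from a nontrivial torsion element of $GL(3,\bbZ)$, a one-dimensional rational subspace on which any integer-unimodular commuting matrix is forced to act by $\pm 1$; the deduction from $\Gamma_* = \{I\}$ to $N \cong \bbT^3$ is then standard group-theoretic bookkeeping.
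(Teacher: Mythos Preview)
Your proof is correct, and the core eigenvalue argument matches the paper's: both use the Lefschetz formula $\det(I-\gamma_*)=0$ to find a nontrivial rational fixed subspace for $\gamma_*$, then use commutativity with an iterate of $f_*$ to force an eigenvalue of $f_*$ onto the unit circle. The paper phrases this by looking at the rank of $E=\ker(\gamma_*-I)\cap\bbZ^3$ (rank~1 gives $f_*$ an eigenvalue $\pm1$; rank~2 forces the complementary eigenvalue to be $\pm1$), while you take the semisimple splitting $V_1\oplus V'$ and read off determinant $\pm1$ on the one-dimensional piece---these are the same idea.

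The genuine difference is in the endgame once $\Gamma_*=\{I\}$. The paper invokes the classification of free finite group actions on $\bbT^3$ \cite{ha2002classification} to assume each $\gamma$ is affine; then $\gamma_*=I$ makes $\gamma$ a translation and $\bbT^3/\Gamma$ is immediately a torus. You bypass this external input with a group-theoretic argument: trivial $\Gamma_*$ makes $\bbZ^3$ central in $\pi_1(N)$, Schur's theorem plus torsion-freeness gives $\pi_1(N)$ abelian, hence $\pi_1(N)\cong\bbZ^3$, and then the aspherical closed 3-manifold $N$ is a torus. Your route is more self-contained and avoids a specialized citation, at the price of invoking the (standard, but not entirely trivial) rigidity step that a closed aspherical 3-manifold with fundamental group $\bbZ^3$ is homeomorphic to $\bbT^3$; the paper's route makes the final identification completely elementary once the affine reduction is granted.
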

\begin{proof}
    Suppose $\bbT^3 / \Gamma$ is such a quotient.
    Then $\Gamma$ is a finite group of fixed-point free homeomorphisms of
    $\bbT^3$.
    As explained in \cite{ha2002classification}, we may freely assume that each
    $\gam \in \Gamma$ is affine.  That is, $\gamma$ is of
    the form $x \mapsto A x + b$ where $A$ is a toral automorphism and
    $b \in \bbT^3$.
    As $f$ descends to the quotient, the map $\Gamma \to \Gamma$,
    $\gam \mapsto f \gam f \inv$ is a well-defined automorphism.
    As $\Gamma$ is finite, there is $n$ such that $\gam = f^n \gam f^{-n}$
    for all $\gam \in \Gamma$. Assume $n=1$.

    The Lefschetz number of a homeomorphism of the 3-torus is
    \[
        (1 - \lam_1)(1 - \lam_2)(1 - \lam_3)
    \]
    where the $\lam_i$ are the eigenvalues of the induced automorphism of
    $H_1(\bbT^3, \bbR)$ \cite{Manning}.
    Consider $\gam \in \Gamma$. As $\gam$ is fixed-point free, one of these
    eigenvalues must be equal to one.
    The associated eigenspace is given by the solution of a system of
    linear equations with integer coefficients.
    As this can be solved over $\bbQ$, the first homology group with
    coefficients in $\bbZ$ has a subgroup defined by
    \[
        E = \{ v \in H_1(\bbT^3, \bbZ) : \gam_* v = v \}
    \]
    which is non-empty.

    Since $f$ and $\gam$ commute, $f_*$ on $H_1(\bbT^3,\bbZ)$
    restricts to an automorphism of $E$.
    If $E$ is rank 1, the automorphism shows that $f_*$ has an eigenvalue of
    $\pm 1$.
    If $E$ is rank 2, the automorphism shows that $f_*$ has two eigenvalues
    whose product is $\pm 1$, and the third eigenvalue is therefore $\pm 1$.
    Neither case is possible if $f_*$ is hyperbolic.
    Hence, $E$ has full rank and by its definition $E = H_1(\bbT^3, \bbZ)$.
    Writing $\gam$ as an affine map $x \mapsto A x + b$, it must be that
    $A$ is the identity.
    Thus, $\Gamma$ is a group of translations, and
    $\bbT^3/\Gamma$ is homeomorphic to the 3-torus.
\end{proof}
\begin{prop} \label{toruscover}
    Suppose $f$ is a homeomorphism on $\bbT^3$ such that
    the induced automorphism on $H_1(\bbT^3, \bbR)$ has eigenvalues $\lam_1,
    \lam_2, \lam_3$ satisfying
    \[
        0 < |\lam_1| < |\lam_2| = 1 < |\lam_3|.
    \]
    If $f$ descends to a homeomorphism of a finite quotient $\bbT^3 / \Gamma$,
    then this quotient is either homeomorphic to the 3-torus
    or double covered by the 3-torus.
\end{prop}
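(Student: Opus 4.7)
The plan is to follow the strategy of Proposition \ref{torushyp}, with a finer analysis of the possible elements of $\Gamma$ since the hypothesis now permits $f_*$ to have an eigenvalue on the unit circle. The goal is to show that the subgroup $\Gamma_T \triangleleft \Gamma$ of pure translations has index at most $2$ in $\Gamma$; then $\bbT^3/\Gamma_T$ is a torus which either equals or double covers $\bbT^3/\Gamma$.

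As in Proposition \ref{torushyp}, by \cite{ha2002classification} one assumes each $\gamma \in \Gamma$ is affine $\gamma(x) = A_\gamma x + b_\gamma$ with $A_\gamma \in GL(3,\bbZ)$, and after replacing $f$ by an iterate, $f_*$ commutes with every $A_\gamma$. Fixed-point freeness of each $\gamma \in \Gamma \setminus \{e\}$ combined with the Lefschetz formula forces $\det(I - A_\gamma) = 0$, so $1$ is always an eigenvalue of $A_\gamma$. The key preliminary is to pin down $\lam_2$: since $|\lam_2|=1$ while $|\lam_1|,|\lam_3|\ne 1$, $\lam_2$ cannot belong to a complex conjugate pair among the eigenvalues, so it is a real algebraic integer of modulus one, hence $\lam_2 \in \{+1,-1\}$. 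Then $\lam_1 \lam_3 = \pm 1$, and both $\lam_1, \lam_3$ are real (a complex conjugate pair would have equal moduli); they are moreover irrational, since a rational algebraic integer of absolute value strictly between $0$ and $1$ does not exist. The quadratic $(x-\lam_1)(x-\lam_3) = p(x)/(x-\lam_2)$ has integer coefficients, so the $2$-plane $V$ spanned by the $\lam_1$- and $\lam_3$-eigenvectors is $\bbQ$-rational as the kernel of an integer matrix polynomial in $f_*$.

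Since $f_*$ has three distinct real eigenlines, any $A_\gamma$ commuting with it is simultaneously diagonalized in the same eigenbasis; having finite order with real eigenvalues, $A_\gamma$ acts on each eigenline as some $\mu_i \in \{+1,-1\}$. The crucial observation is that $\ker(A_\gamma - I)$, being the kernel of an integer matrix, is both $\bbQ$-rational and $f_*$-invariant; but by primary decomposition over $\bbQ$ the only $\bbQ$-rational $f_*$-invariant subspaces are $0$, $W := \ker(f_* - \lam_2 I)$, $V$, and $\bbR^3$, because the irrationality of $\lam_1, \lam_3$ rules out any rational eigenline inside $V$. Combined with the requirement that $1$ be an eigenvalue of $A_\gamma$, this forces $A_\gamma$ to be one of three matrices: the identity; the involution $B_a$ acting as $-I$ on $V$ and $+I$ on $W$; or the involution $B_b$ acting as $+I$ on $V$ and $-I$ on $W$. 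Moreover $B_a$ and $B_b$ cannot both appear, since $B_a B_b = -I$ satisfies $\det(I - (-I)) = 8 \ne 0$ and the corresponding element would have a fixed point by the Lefschetz formula, contradicting freeness.

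Hence the image of $\Gamma \to GL(3,\bbZ)$, $\gamma \mapsto A_\gamma$, has at most two elements, which is exactly $[\Gamma : \Gamma_T] \le 2$. If the index is $1$, $\Gamma = \Gamma_T$ acts by translations and $\bbT^3/\Gamma$ is itself a torus; if the index is $2$, $\bbT^3/\Gamma_T$ is a torus that double covers $\bbT^3/\Gamma$. I expect the main technical hurdle to be the classification of the possible $A_\gamma$: this rests entirely on the arithmetic fact that $\lam_1, \lam_3$ are irrational Galois conjugates, so that $V$ is the unique nontrivial $\bbQ$-rational $f_*$-invariant $2$-plane; once this is in place, the exclusion of $B_a B_b = -I$ and the counting argument are straightforward.
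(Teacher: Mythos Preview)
Your proof is correct and follows the same overall strategy as the paper's: reduce to affine $\gamma$, pass to an iterate so that $f_*$ commutes with each $A_\gamma$, use the Lefschetz formula to force $1 \in \operatorname{spec}(A_\gamma)$, classify the possible linear parts, and rule out the product $-I$. The paper also first quotients by the translation subgroup, but this is equivalent to your index bound $[\Gamma : \Gamma_T] \le 2$.

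The one genuine technical difference is in how the linear parts are pinned down. The paper chooses an integral basis putting $f_*$ in block lower-triangular form with a hyperbolic $A \in GL(2,\bbZ)$ in the upper-left, and then invokes Lemma~\ref{baake} (the centralizer of a hyperbolic element of $GL(2,\bbZ)$ is cyclic up to sign) to force the corresponding block of $\gamma_*$ to be $\pm I$. You instead diagonalize over $\bbR$ and use that $\ker(A_\gamma - I)$ is a $\bbQ$-rational $f_*$-invariant subspace, together with the irreducibility of $q(x) = (x-\lambda_1)(x-\lambda_3)$ over $\bbQ$, to see directly that $A_\gamma$ must act as $\pm 1$ on each of $W$ and $V$ (not on the individual eigenlines of $V$ separately). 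Your route is slightly more conceptual and sidesteps Lemma~\ref{baake} entirely, at the cost of the short arithmetic digression showing $\lambda_1,\lambda_3$ are irrational conjugates; the paper's route is more hands-on but reuses a lemma already stated. Both arrive at the same three candidates for $A_\gamma$ and the same exclusion of $-I$.
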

\begin{proof}
    As in the last proof, after replacing $f$ with an iterate, we may freely
    assume that $f \gam = \gam f$ for all $\gam$ and that each $\gam$ is an affine
    map on $\bbT^3$.
    Also, assume that the eigenvalues $\lam_i$ are all positive.

    Define $\Gam_0$ as the set of $\gam \in \Gam$ which are translations
    $x \mapsto x + b$.
    Then, $\Gam_0$ is a normal subgroup, $f$ descends to $\bbT^3/\Gam_0$
    and the quotient is homeomorphic to $\bbT^3$.
    Therefore, up to replacing $\bbT^3$ by $\bbT^3/\Gam_0$, we may freely assume
    that no non-trivial $\gam \in \Gam$ is a translation.
    That is, $\gam_*  \ne  Id$ where $\gam_*$
    is the induced automorphism of $H_1(\bbT^3, \bbZ)$.

    Now suppose there is a non-trivial element $\gam \in \Gam$.
    Our goal is to show that $\gam$ is the unique non-trivial element.

    As $\lam_2 = 1$ is an eigenvalue, arguing as in the last proof, there
    is $v \in H_1(\bbT^3, \bbZ)$ such that $f_*v = v$.
    Choosing an appropriate basis for $H_1(\bbT^3, \bbZ) \cong \bbZ^3$,
    write $f_*$ as a $3 \times 3$ matrix
    \[
        f_* =
        \left(
        \begin{array}{c|c}
        A & {\begin{matrix}0\\0\end{matrix}} \\ \hline
        {\begin{matrix}*&*\end{matrix}}& 1
        \end{array}
        \right)
    \]
    where $A \in GL(2,\bbZ)$ is hyperbolic.
    As $\gam_*$ commutes with $f_*$,
    \[
        \gam_* =
        \left(
        \begin{array}{c|c}
        B & {\begin{matrix}0\\0\end{matrix}} \\ \hline
        {\begin{matrix}*&*\end{matrix}}& c
        \end{array}
        \right)
    \]
    where $A B = B A$.
    The upper-right entries are zero because that is the only
    $2 \times 1$ matrix $X$ which satisfies $A X = X$.
    Since $\Gam$ is finite, there is $n$ such that
    $\gam_*^n = Id$.  Consequently, $c = \pm 1$ and, by
    Lemma \ref{baake}, $B = \pm Id$.

    We now consider the four cases for $B$ and $c$.

    {\bf Case 1.}\ \ $B=+Id$ and $c=+1$.\\
    Here, $\gam_*$ is a lower triangular matrix with ones on the diagonal
    and $\gam_*^n = Id$ for some $n$.
    This implies that $\gam_* = Id$, which is not possible by an earlier
    assumption.
    Therefore, this case cannot occur.

    {\bf Case 2.}\ \ $B=-Id$ and $c=-1$.\\
    In this case, $\gam_*$ has $-1$ as the sole eigenvalue.  Using the
    Lefschetz fixed point theorem as in the
    previous proof, $\gam$ would have a fixed point, a contradiction.

    {\bf Case 3.}\ \ $B=+Id$ and $c=-1$.\\
    Suppose $\gam'$ is another non-trivial element of $\Gam$ where the matrix
    $\gam'_*$ has corresponding submatrices $B'$ and $c'$.
    If $B' = -Id$ and $c' = +1$,
    then $B B' = -Id$, $c c' = -1$, and the product $\gam \gam' \in \Gam$ would
    have a matrix of the form already ruled out in case 2 above.
    Therefore, $B' = +Id$, $c=-1$,
    and the product $\gam \gam'$ is of the form considered in case 1.
    This is only possible if $\gam \gam'$ is the identity element of $\Gam$.
    Since this holds for any non-trivial $\gam' \in \Gam$, it follows that $\Gam$
    is the two element group $\{Id, \gam\}$.

    {\bf Case 4.}\ \ $B=-Id$ and $c=+1$.\\
    This case is nearly identical to case 3 and is left to the reader.
\end{proof}
Not including the torus itself, there are exactly three manifolds (up to
either homeomorphism or diffeomorphism)
double covered by the 3-torus \cite{lee1993}.
They are defined by quotienting $\bbT^3 = \bbR^3/\bbZ^3$ by one of the following
maps
\begin{align*}
    \tau_1(x,y,z) &= (-x,\ -y,\ z + \tfrac{1}{2}) \\
    \tau_2(x,y,z) &= (x + \tfrac{1}{2},\ y,\ -z) \\
    \tau_3(x,y,z) &= (x+z + \tfrac{1}{2},\ y+z,\ -z).
\end{align*}
All three resulting manifolds admit partially hyperbolic
diffeomorphisms, as demonstrated by the toral automorphisms generated by the
matrices
\[
        \begin{pmatrix}
        2 & 1 & 0 \\
        1 & 1 & 0 \\
        0 & 0 & 1  \end{pmatrix}
    ,\quad
        \begin{pmatrix}
        3 & 1 & 0 \\
        2 & 1 & 0 \\
        0 & 0 & 1  \end{pmatrix}
    ,\quad \text{and} \quad
        \begin{pmatrix}
        5 & 2 & 3 \\
        2 & 1 & 1 \\
        0 & 0 & 1  \end{pmatrix}
    .
\]
Further, the classification up to leaf conjugacy can be extended from the
3-torus to its finite quotients.  To do this, we first state a consequence of
the classification on the 3-torus.

\begin{prop} \label{conjfromfund}
    Suppose $f_T,g_T:\bbT^3 \to \bbT^3$ are partially hyperbolic, dynamically coherent
    diffeomorphisms, with lifts $f,g:\bbR^3 \to \bbR^3$ such that
    the induced automorphisms $f_*$ and $g_*$ on $\pi_1(\bbT^3)$ are equal.
    Then, there is a leaf conjugacy $h:\bbR^3 \to \bbR^3$ between $f$ and $g$
    which descends to a leaf conjugacy $h_T:\bbT^3 \to \bbT^3$ between
    $f_T$ and $g_T$ such that $h_*$ is the identity.

    Further, $h$ is unique up to sliding along center leaves.  That is, if
    $h'$ is another such leaf conjugacy, then there is $C > 0$ such that for
    all $x \in \bbR^3$, $h(x)$ and $h'(x)$ lie on the same center leaf and
    $d_c(h(x), h'(x)) < C$.
\end{prop}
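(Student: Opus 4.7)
The plan is to reduce everything to a common linear model and then exploit rigidity of that model. By the Main Theorem in the virtually nilpotent case, proved in \cite{HP}, each of $f_T$ and $g_T$ is leaf conjugate on $\bbT^3$ to the linear partially hyperbolic automorphism $A$ determined by its action on $\pi_1(\bbT^3) \cong \bbZ^3$; since $f_\ast = g_\ast$ these two linearizations coincide. Fix leaf conjugacies $h_f : f_T \to A$ and $h_g : g_T \to A$. The construction in \cite{HP} produces $h_f, h_g$ homotopic to the identity, whose lifts $\tilde h_f, \tilde h_g : \bbR^3 \to \bbR^3$ may be chosen a bounded distance from the identity (the nilpotent analog of Lemma \ref{niceh}).

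Set $h := \tilde h_g^{-1} \circ \tilde h_f$. A direct check shows that $h$ sends each $\Fsig_f$-leaf to a $\Fsig_g$-leaf for $\sigma = c, cs, cu$ and intertwines the leafwise dynamics, so it is a leaf conjugacy between $f$ and $g$. As a composition of maps at bounded distance from the identity, $h$ is itself at bounded distance from the identity; the displacement $x \mapsto h(x) - x$ is then $\bbZ^3$-periodic, hence uniformly bounded, which forces $h_\ast = \id$ on $\pi_1(\bbT^3)$. Consequently $h$ descends to the required $h_T$ on $\bbT^3$.

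For uniqueness, let $h'$ be another such leaf conjugacy with $(h')_\ast = \id$. Then $h'$ also lifts at bounded distance from the identity, so $D := \sup_x d(h(x), h'(x)) < \infty$. Fix a leaf $L \in \Fcs_f$. The images $h(L), h'(L) \in \Fcs_g$ satisfy $\HD(h(L), h'(L)) \le D$; applying the uniformly continuous map $\tilde h_g$ and the leaf-conjugacy identity $\tilde h_g(g^n(\cdot)) = A^n(\tilde h_g(\cdot))$ (on leaves) yields
\[
    \HD\bigl(A^n(\tilde h_g(h(L))),\ A^n(\tilde h_g(h'(L)))\bigr) \le \omega(D)
\]
for every $n \ge 0$, where $\omega$ is a uniform-continuity modulus for $\tilde h_g$. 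Two distinct $\Acs$-leaves in $\bbR^3$ are parallel planes whose separation along $E^u$ is multiplied by the unstable eigenvalue $\lam > 1$ under each iterate of $A$, so the left-hand side would grow without bound. This forces $\tilde h_g(h(L)) = \tilde h_g(h'(L))$ and hence $h(L) = h'(L)$. The same argument applied to $\Fcu_f$-leaves, combined with the global product structure of the linear model, places $h(x)$ and $h'(x)$ on a common center leaf of $g$ for every $x$. Finally, the leafwise distance $s(x) := d_c(h(x), h'(x))$ is continuous and $\bbZ^3$-invariant, so it descends to a continuous function on compact $\bbT^3$, and is therefore bounded by some $C > 0$.

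The main technical obstacle is the iteration argument just sketched: unlike the solvable setting (Corollary \ref{hdinfty}), $\Acs$-leaves in the torus model are parallel planes at finite Hausdorff distance, so one cannot pass from bounded Hausdorff distance to equality of leaves directly. The substitute, exploiting exponential expansion of $A$ along $E^u$ together with the leaf conjugacy relation to convert a fixed Hausdorff bound into an unbounded one unless the leaves already coincide, is where I would spend the most care, though the technique is standard in the classification results that underlie \cite{HP}.
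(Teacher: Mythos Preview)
Your approach matches the paper's: both reduce to the common linear model via \cite{Hammerlindl,HP} and compose the two leaf conjugacies. The paper's proof is in fact nothing more than a citation --- it simply notes that both maps are leaf conjugate to the same linear $A$ and defers every remaining detail to \cite{Hammerlindl} --- so your write-up is a reasonable expansion of that reference.

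There is one genuine gap in the uniqueness argument. From $\HD(h(L),h'(L))\le D$ together with the identity $\tilde h_g(g^n(\cdot))=A^n(\tilde h_g(\cdot))$ alone you cannot obtain the displayed bound: applying $A^n$ to two fixed $\Acs$-planes certainly increases their Hausdorff distance, and that expansion is precisely what you want to use for the contradiction. What is missing is the leaf-conjugacy relations for $h$ and $h'$ themselves. Since $g^n(h(L))=h(f^n(L))$ and $g^n(h'(L))=h'(f^n(L))$, and $\sup_x d(h(x),h'(x))\le D$, one has
\[
\HD\bigl(g^n h(L),\,g^n h'(L)\bigr)\le D\qquad\text{for every }n\ge 0.
\]
Only after this do you apply $\tilde h_g$ (at bounded distance $R$ from the identity) and the relation $\tilde h_g g^n = A^n \tilde h_g$ on leaves to get
\[
\HD\bigl(A^n\,\tilde h_g h(L),\,A^n\,\tilde h_g h'(L)\bigr)\le D+2R
\]
uniformly in $n$, which then contradicts the exponential separation of distinct $\Acs$-planes under $A$. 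With this step inserted, the rest of your argument (same $\Fcs_g$ and $\Fcu_g$ leaf, hence same center leaf by global product structure, and $\bbZ^3$-periodicity of $d_c(h(x),h'(x))$) is correct.
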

\begin{proof}
    As shown in \cite{Hammerlindl} and extended to the pointwise case in
    \cite{HP}, if $f$ and $g$ have the same action on $\pi_1(\bbT^3)$, they are
    both leaf conjugate to the same linear map on $\bbR^3$.  The stated results
    then follow from this and the properties proven in \cite{Hammerlindl}.
\end{proof}
Let $\Aff(\bbR^n)$ denote the group of affine maps on $\bbR^n$.

\begin{prop} \label{virtabel}
    Suppose that $f_M$ is a partially hyperbolic diffeomorphism on a 3-manifold $M$
    with virtually abelian fundamental group and that there is no 2-torus tangent
    to $\Ec \oplus \Eu$ or $\Ec \oplus \Es$.
    Then, there is a subgroup $\Gamma < \Aff(\bbR^3)$, and a partially hyperbolic
    linear map $A: \bbR^3 \to \bbR^3$
    such that $A$ descends to a map
    $A_M : \bbR^3/\Gam \to \bbR^3/\Gam$
    which is leaf conjugate to $f_M$.
\end{prop}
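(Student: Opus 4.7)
The strategy is to realize $M$ as a finite affine quotient of $\bbT^3$, pull back to the known classification on the torus via Proposition \ref{conjfromfund}, and descend the resulting leaf conjugacy using an equivariance argument. Since $\pi_1(M)$ is virtually abelian and $M$ is a closed $3$-manifold with infinite fundamental group, the unique torsion-free maximal abelian finite-index subgroup $\bbZ^3 \lhd \Gamma := \pi_1(M)$ is characteristic. By \cite{ha2002classification} (as used in the proof of Proposition \ref{torushyp}), the finite quotient $H := \Gamma/\bbZ^3$ can be realized as a group of affine maps on $\bbT^3 = \bbR^3/\bbZ^3$, which embeds $\Gamma$ as a subgroup of $\Aff(\bbR^3)$ with $M = \bbR^3/\Gamma$.

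Because $\bbZ^3$ is characteristic, $f_M$ itself (no iterate required) lifts to a partially hyperbolic diffeomorphism $f_T : \bbT^3 \to \bbT^3$, which inherits the absence of periodic tori tangent to $\Ecs$ or $\Ecu$. Set $A_0 := (f_T)_* \in GL(3,\bbZ)$, partially hyperbolic by the classification of \cite{HP}, and let $A : \bbR^3 \to \bbR^3$ be the corresponding linear map. Since $(f_M)_*$ preserves the filtration $\bbZ^3 \lhd \Gamma$ and restricts to $A_0$ on $\bbZ^3$, a direct check shows $A$ normalizes $\Gamma$, hence descends to $A_M : \bbR^3/\Gamma \to \bbR^3/\Gamma$ as well as to $A_T$ on $\bbT^3$. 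By Proposition \ref{conjfromfund}, $f_T$ and $A_T$ are leaf conjugate via some $h_T : \bbT^3 \to \bbT^3$ whose lift $h : \bbR^3 \to \bbR^3$ is bounded distance from the identity and unique up to bounded sliding along the one-dimensional center foliation $\Fc$.

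The main step is to promote $h_T$ to an $H$-equivariant leaf conjugacy. After composing $A$ with a suitable translation (so that $\text{Inn}(A) = f_{M*}$ as automorphisms of $\Gamma$, an adjustment absorbable into the $\bbZ^3$-part), the lifts $\tilde f$ and $A$ satisfy the same $\Gamma$-equivariance. Consequently, for each $\gamma \in \Gamma$, the conjugate $\gamma \circ h \circ \gamma^{-1}$ is again a leaf conjugacy between $\tilde f$ and $A$. By the uniqueness clause of Proposition \ref{conjfromfund}, each such conjugate differs from $h$ only by bounded sliding along $\Fc$; moreover, the $\bbZ^3$-equivariance of $h$ absorbs the translations, so the non-trivial family is indexed by the finite group $H$. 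Averaging the $|H|$ sliding parameters, viewed as continuous real-valued functions along leaves via a parametrization analogous to the function $p_c$ of Lemma \ref{projc}, yields an $H$-equivariant leaf conjugacy $\tilde h$ satisfying $\tilde h \circ \gamma = \gamma \circ \tilde h$ for all $\gamma \in \Gamma$. Descending $\tilde h$ to $M$ produces the required leaf conjugacy between $f_M$ and $A_M$.

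The main obstacle is the averaging step. While uniqueness controls the differences only up to bounded center slides, one must verify that the average of finitely many sliding parameters produces a continuous map that preserves both the center foliation and the leaf-conjugacy identity $\tilde h \circ \tilde f = A \circ \tilde h$ at the level of leaves. The one-dimensionality of $\Fc$, the explicit real-valued parametrization of its leaves, the finiteness of $H$, and the boundedness of each individual slide make this a technical but tractable verification; the compatibility of the averaged slide with the $\Gamma$-action follows from the fact that we are averaging over the full orbit of $h$ under conjugation by $H$.
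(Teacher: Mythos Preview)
Your overall strategy matches the paper's: lift to $\bbT^3$, invoke Proposition \ref{conjfromfund} to get a leaf conjugacy $h$ to an algebraic model, verify that each conjugate $\gamma h \gamma^{-1}$ by a deck transformation is again a leaf conjugacy between the same pair of maps (hence $c$-equivalent to $h$ by the uniqueness clause), and average to obtain a $\Gamma$-equivariant leaf conjugacy that descends to $M$. The paper additionally invokes Propositions \ref{torushyp} and \ref{toruscover} to reduce to $|H|\le 2$ before averaging, but the averaging technique from \cite{Hammerlindl} works for any finite group, so skipping that reduction is not fatal.

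There is, however, a genuine gap in your treatment of the linear model. The claim that ``a direct check shows $A$ normalizes $\Gamma$'' is not correct in general. Knowing that $(f_M)_*$ restricts to $A_0$ on $\bbZ^3$ only guarantees, via Bieberbach, that some \emph{affine} map $g(x)=A_0 x+b$ normalizes $\Gamma$ with $g_*=(f_M)_*$; the translation part $b$ need not lie in $\bbZ^3$ (or even $\bbQ^3$), and the purely linear map $x\mapsto A_0 x$ can fail to normalize $\Gamma$. For a concrete obstruction, conjugate the standard lattice $\langle\bbZ^3,\tau_1\rangle$ by a generic translation: the resulting isomorphic $\Gamma'$ still supports the same partially hyperbolic quotient, but $A_0$ no longer normalizes $\Gamma'$. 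This matters twice in your argument. First, $A$ need not descend to $M$. Second, and more seriously, the verification that $\gamma h\gamma^{-1}$ is a leaf conjugacy between the \emph{same} pair of lifts hinges on the identity $\alpha f\alpha^{-1}=f\beta$ and $\alpha g\alpha^{-1}=g\beta$ with the \emph{same} $\beta\in\bbZ^3$, which is exactly the statement $g_*=f_*$ on all of $\Gamma$; for the linear $A$ this fails, so the uniqueness clause of Proposition \ref{conjfromfund} cannot be invoked. Your parenthetical about ``composing $A$ with a suitable translation, absorbable into the $\bbZ^3$-part'' does not fix this, since the needed translation is typically not integral.

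The paper's remedy is to run the entire equivariance and averaging argument with the affine map $g$ (for which $g_*=f_*$ holds by construction), and only afterwards pass to a linear map: one conjugates $g$ by a translation chosen so that the new translation part lies in $\ker(A-\mathrm{Id})$, i.e., along the center direction. This conjugation simultaneously replaces $\Gamma$ by a conjugate subgroup of $\Aff(\bbR^3)$, and the identity then gives a leaf conjugacy between the adjusted $g$ and the linear $A$ on the adjusted quotient. Incorporating this step would close the gap in your outline.
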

\begin{proof}
    As $M$ is virtually nilpotent, it is finitely covered by a circle bundle
    over a 2-torus \cite{Pw}.  To be virtually abelian, this circle bundle
    must be trivial. That is, $M$ is finitely covered by the 3-torus.

    If $M$ is diffeomorphic to the 3-torus, the proof follows from
    \cite{Hammerlindl} and \cite{HP}.

    Otherwise, by the results of this section, we may freely assume that $M =
    \bbR^3 / \Gamma$, where $\Gamma$ has an index-two subgroup $\Gamma_0$
    consisting of translations $x \mapsto x + z$ for $z \in \bbZ^3$.  As
    $\Gamma_0$ is the maximal nilpotent subgroup of $\Gamma$, it is preserved by
    every automorphism.
    Choose some element of $\Gamma \setminus \Gamma_0$ and call it $\alpha$.

    Lift $f_M$ to $f:\bbR^3 \to \bbR^3$.
    This defines an automorphism $f_*:\Gamma \to \Gamma$, and by the theorems of
    Bieberbach (see, for instance, \cite{lee-raymond}), there is an affine map
    $g:\bbR^3 \to \bbR^3$, $g(x) = A x + b$ such that $g$ descends to a
    diffeomorphism $g_M:M \to M$,
    and the induced automorphism $g_*:\Gamma \to \Gamma$ is equal to $f_*$.

    As $f$ is partially hyperbolic, its action on $\Gamma_0 \cong \bbZ^3$ is
    partially hyperbolic (\cite{BI}). Moreover, by \cite{Pot} (or \cite[Proposition 3.3]{HP}), it has eigenvalues $\lam_1,\lam_2,\lam_3$ such that
    \[
        |\lam_1|<|\lam_2|<|\lam_3| \quad \text{and} \quad |\lam_1| < 1 < |\lam_3|.
    \]
    By Proposition \ref{torushyp} and the assumption that $M$ is not the
    3-torus, it must be that $|\lam_2|=1$.
    Then, $g$ is partially hyperbolic and dynamically coherent.
    As the center bundle of $g$ is uniquely integrable, so is the center
    bundle of the quotiented map $g_M$ on $M$.
    This shows that the center foliation of $g$ is $\Gamma$-invariant.

    Let $h:\bbR^3 \to \bbR^3$ be the leaf conjugacy given by Proposition
    \ref{conjfromfund}.  That is, $hg(\cL) = fh(\cL)$ for every center leaf $\cL$
    of $g$, and $h \gam = \gam h$ for all $\gam \in \Gamma_0$.
    As $\Gamma_0$ is normal, $\gam \alpha h \alpha \inv = \alpha h \alpha \inv \gam$
    for all $\gam \in \Gamma_0$.
    Also, there is $\beta \in \Gamma_0$ such that
    $\alpha f \alpha \inv = f \beta$.
    Since, $f_*=g_*$, it follows that
    $\alpha g \alpha \inv = g \beta$,
    using the same $\beta$.
    Then,
    \begin{align*}
        fh(\cL) &= hg(\cL)  \quad \Rightarrow \\
        \alpha f \alpha \inv \alpha h \alpha \inv (\cL) &=
        \alpha h \alpha \inv \alpha g \alpha \inv (\cL)  \quad \Rightarrow \\
        f \beta \alpha h \alpha \inv (\cL) &=
        \alpha h \alpha \inv g \beta (\cL)  \quad \Rightarrow \\
        f \alpha h \alpha \inv \beta (\cL) &=
        \alpha h \alpha \inv g \beta (\cL)  \quad \Rightarrow \\
        f \alpha h \alpha \inv (\cL) &=
        \alpha h \alpha \inv g (\cL),
    \end{align*}
    so $\alpha h \alpha \inv$ is also a leaf conjugacy.
    By the uniqueness given in Proposition \ref{conjfromfund},
    $h(\cL) = \alpha h \alpha \inv(\cL)$ for every center leaf $\cL$ of $g$.
    That is, $h$ and $\alpha h \alpha \inv$ are {\em $c$-equivalent}, as
    defined in section 6 of \cite{Hammerlindl}.
    That section explains how to average a finite number of $c$-equivalent
    leaf conjugacies to get a new leaf conjugacy.  If we define $h_1$ as such
    an average of $h$ and $\alpha h \alpha \inv$, then one can verify that
    $h_1$ is a leaf conjugacy on $\bbR^3$ which quotients to $M = \bbR^3/\Gamma$.

    We have proven that $f_M$ is leaf conjugate to a quotient of an affine map
    $g:\bbR^3 \to \bbR^3,\ \ x \mapsto A x + b$, whereas the proposition claims
    that $f_M$ is leaf conjugate to a linear map.
    By conjugating $g$ with a translation $x \mapsto x + c$ for $c \in \bbR^3$,
    we can replace $g$ by the map $x \mapsto A x + A c - c + b$.
    Hence, we can assume that $b$ is in the null space of $A - Id$.
    This conjugation means that $\Gamma < \Aff(\bbR^3)$ will also be replaced by
    a conjugate subgroup of $\Aff(\bbR^3)$.
    Then, $(A - Id)b = 0$ implies that $b$ and the origin of $\bbR^3$ lie in
    the same center leaf of $g$.
    This shows that $g$ is leaf conjugate to the
    linear map $A$ and this leaf conjugacy (the identity on $\bbR^3$) descends
    to the quotient $\bbR^3 / \Gamma$, completing the proof.
\end{proof}
Similar results hold for finite quotients of 3-dimensional nilmanifolds.
Let $\Heis$ denote the Heisenberg group, consisting of all matrices of the form
\[
        \begin{pmatrix}
        1 & x & z \\
        0 & 1 & y \\
        0 & 0 & 1  \end{pmatrix}
\]
under multiplication.
For any integer $k  \ge  1$, the set of matrices with $x,y \in \bbZ$ and
$z \in \frac{1}{k}\bbZ$ defines a discrete subgroup $\Gamma_k$, and
the quotient $\Heis/\Gamma_k$ is a compact manifold, a \emph{nilmanifold},
denoted by $\cN_k$.  For each $\cN_k$, the homology group $H_1(\cN_k, \bbR)$ is
two-dimensional.
In fact, the projection
\[
    \Heis \to \bbR^2, \quad
        \begin{pmatrix}
        1 & x & z \\
        0 & 1 & y \\
        0 & 0 & 1  \end{pmatrix}
    \mapsto (x,y).
\]
defines a projection $\cN_k \to \bbT^2$, and this projection induces the isomorphism
between $H_1(\cN_k, \bbR)$ and $H_1(\bbT^2, \bbR)$.
An affine map on $\Heis$ is a map of the form $x \mapsto \Phi(x) \cdot c$
where $\Phi:\Heis \to \Heis$ is a Lie group automorphism and $c \in \Heis$.
An affine map on $\cN_k$ is a quotient of an affine map on $\Heis$.

\begin{prop}
    Suppose $f$ is a homeomorphism of $\cN_k$ such that the induced
    automorphism on $H_1(\cN_k, \bbR)$ is hyperbolic.
    If $f$ descends to a homeomorphism of a finite quotient $\cN_k / \Gamma$,
    then this quotient is either homeomorphic to $\cN_\ell$
    or double covered by $\cN_\ell$ for some $\ell$.
      \end{prop}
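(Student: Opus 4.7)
The plan is to adapt the strategy of Propositions \ref{torushyp} and \ref{toruscover}, with the Heisenberg nilmanifold playing the role of the torus. Let $\tilde\Gamma := \pi_1(\cN_k/\Gamma)$, fitting into a short exact sequence $1 \to \Gamma_k \to \tilde\Gamma \to \Gamma \to 1$. By the almost-Bieberbach theorem of Auslander (see \cite{lee-raymond}), one may realize $\tilde\Gamma$ as a discrete cocompact subgroup of $\mathrm{Aff}(\Heis) = \Heis \rtimes \mathrm{Aut}(\Heis)$ in such a way that $\Gamma_k$ sits as pure translations. Set $\Lambda := \tilde\Gamma \cap \Heis$, a lattice in $\Heis$, and let $F := \tilde\Gamma/\Lambda$ be the finite holonomy group. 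Since any lattice in $\Heis$ has quotient diffeomorphic to some $\cN_\ell$, the regular cover $\Heis/\Lambda \to \Heis/\tilde\Gamma = \cN_k/\Gamma$ has degree $|F|$ with source $\cN_\ell$, and the task reduces to proving $|F| \le 2$.

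Lift $f$ to $\tilde f : \Heis \to \Heis$. Then $\sigma(\gamma) := \tilde f \gamma \tilde f^{-1}$ defines an automorphism of $\tilde\Gamma$ preserving the characteristic subgroup $\Gamma_k$, and after passing to an iterate of $f$ we may assume $\sigma$ fixes $\tilde\Gamma/\Gamma_k = \Gamma$ pointwise, so $\sigma(\gamma) \in \gamma \Gamma_k$ for every $\gamma \in \tilde\Gamma$. For such a $\gamma$, let $\bar\Phi_\gamma \in GL(2,\bbZ)$ denote the action of conjugation by $\gamma$ on the free part $\bbZ^2$ of $\Gamma_k^{\mathrm{ab}} = H_1(\cN_k, \bbZ)$. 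Applying $\sigma$ to the conjugation $\gamma g \gamma^{-1}$ for $g \in \Gamma_k$ and projecting to $\bbZ^2$ — using that inner automorphisms of $\Gamma_k$ act trivially on $\Gamma_k^{\mathrm{ab}}$ and that $\sigma|_{\Gamma_k}$ induces the hyperbolic $A$ on $\bbZ^2$ — one obtains the commuting relation $A \bar\Phi_\gamma = \bar\Phi_\gamma A$.

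By Lemma \ref{baake}, each $\bar\Phi_\gamma$ is of the form $\pm A_0^j$; since $\gamma \mapsto \bar\Phi_\gamma$ factors through the finite group $F$ and $A_0$ is hyperbolic (hence of infinite order), this forces $\bar\Phi_\gamma \in \{\pm Id\}$. Thus the induced homomorphism $F \to GL(2,\bbZ)$ has image in $\{\pm Id\}$. Its kernel consists of cosets whose automorphism factor lies in $\mathrm{Aut}_0(\Heis)$, the group of automorphisms of $\Heis$ acting as the identity on $\Heis/Z(\Heis)$. A direct computation shows that any element of $\mathrm{Aut}_0(\Heis)$ is determined by a linear form $\Heis/Z \to Z$ and that composition corresponds to addition of these forms, so $\mathrm{Aut}_0(\Heis) \cong (\bbR^2, +)$, which has no nontrivial finite subgroup. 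Hence $F$ injects into $\{\pm Id\}$, so $|F| \le 2$, completing the proof.

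The hardest part is the initial identification of $\tilde\Gamma$ with an affine subgroup of $\mathrm{Aff}(\Heis)$ compatibly with the already-given inclusion $\Gamma_k \subset \Heis$; this is a classical result in the theory of infra-nilmanifolds which must be cited from external work rather than proved here. Once this affine structure is in hand, the remainder is essentially a direct translation of the commuting-matrix arguments of Propositions \ref{torushyp} and \ref{toruscover} to the Heisenberg setting, the only new ingredient being the verification that the ``vertical part'' $\mathrm{Aut}_0(\Heis)$ of $\mathrm{Aut}(\Heis)$ is torsion-free.
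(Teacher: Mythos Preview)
Your proof is correct and follows essentially the same route as the paper: realize the deck group affinely via the Auslander/Lee--Raymond theorem, obtain the commutation $A\bar\Phi_\gamma=\bar\Phi_\gamma A$, apply Lemma~\ref{baake}, and conclude that the holonomy has order at most two. The paper simply says ``follows in the same manner as Proposition~\ref{toruscover}'' after recording the block form $\left(\begin{smallmatrix}A&0\\ *&\det A\end{smallmatrix}\right)$ of the induced automorphism of $\heis$.

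One mild organizational difference is worth noting. In Proposition~\ref{toruscover} the four cases $(B,c)\in\{\pm Id\}\times\{\pm 1\}$ are handled separately, with the Lefschetz fixed-point theorem used to exclude $(B,c)=(-Id,-1)$. In the Heisenberg setting the bottom-right entry is forced to equal $\det B$, so the pairs $(Id,-1)$ and $(-Id,-1)$ never arise; your observation that $\mathrm{Aut}_0(\Heis)\cong(\bbR^2,+)$ is torsion-free handles the remaining case $(Id,1)$ in one stroke and makes the case analysis (and the Lefschetz argument) unnecessary. This is a slightly cleaner packaging of the same idea rather than a different argument.
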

\begin{proof}
    As in the proofs of Propositions \ref{torushyp} and \ref{toruscover},
    we may assume the elements of $\Gamma$ are affine maps.
    (See
    \cite{lee-raymond} and \cite{choi2005}.)
    As explained in Section 2 of \cite{HNil}, to every homeomorphism of
    $\cN_k$ is associated an automorphism of the Lie algebra $\heis$, and this
    automorphism can be written as a $3 \times 3$ matrix
    \[
        \left(
        \begin{array}{c|c}
        A & {\begin{matrix}0\\0\end{matrix}} \\ \hline
        {\begin{matrix}*&*\end{matrix}}& \det(A)
        \end{array}
        \right).
    \]
    In fact, $A$ has integer entries and it is the matrix given by the
    automorphism induced by $f$ on $H_1(\cN_k, \bbR)$ (see the discussion after
    Proposition 5.1 in \cite{HNil}).

    Using these properties, the proof follows in the same manner as in
    Proposition \ref{toruscover}.
\end{proof}
The finite quotients of 3-nilmanifolds have been classified \cite{dekimpe1995}.
If $M$ is double covered by $\cN_k$, but is not itself a nilmanifold, then
$k$ is even and $M$ is diffeomorphic to $\Heis/\langle \Gamma_k, \tau_k \rangle$
where
\[
    \tau_k : \Heis \to \Heis, \quad
        \begin{pmatrix}
        1 & x & z \\
        0 & 1 & y \\
        0 & 0 & 1  \end{pmatrix}
    \mapsto
        \begin{pmatrix}
        1 & -x & z+\tfrac{1}{2k} \\
        0 & 1 & -y \\
        0 & 0 & 1  \end{pmatrix}
    .
\]
To see this, look at the list of quotients given in \cite{dekimpe1995}
and note that only item 2 of the list is a double cover.

Every such manifold supports a partially hyperbolic diffeomorphism, as
evidenced by the map
\[
    \Heis \to \Heis, \quad
        \begin{pmatrix}
        1 & x & z \\
        0 & 1 & y \\
        0 & 0 & 1  \end{pmatrix}
    \mapsto
        \begin{pmatrix}
        1 & 5x+2y & z+5x^2+y^2+4x y \\
        0 & 1 & 2y+z \\
        0 & 0 & 1  \end{pmatrix}
    .
\]
This map also shows that $\cN_k$ supports a partially hyperbolic
diffeomorphism for every $k$ (even or odd).

\begin{prop}
    Suppose $f_M$ is a partially hyperbolic diffeomorphism on a 3-manifold $M$
    with a fundamental group which is virtually nilpotent and not virtually
    abelian.
    Then, there is a subgroup $\Gamma < \Aff(\Heis)$, and a partially
    hyperbolic automorphism $\Phi: \Heis \to \Heis$
    such that $\Phi$ descends to a map
    $\Phi_M: \Heis/\Gamma \to \Heis/\Gamma$
    which is leaf conjugate to $f_M$.
\end{prop}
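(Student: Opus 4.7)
The plan is to adapt the argument of Proposition \ref{virtabel} from the abelian to the Heisenberg setting, using the finite-quotient classification immediately preceding the statement.

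First I would reduce to a nilmanifold cover. By the result of Parwani \cite{Pw}, $M$ is finitely covered by some $\cN_k$. The virtually nilpotent case on a nilmanifold itself has been settled in \cite{HNil} (see also \cite{HP}), so that case is done. Otherwise, by the previous proposition in this subsection, either $M$ is already homeomorphic to some $\cN_\ell$, or $M$ is double covered by some $\cN_{2\ell}$. In the remaining case, the explicit classification of \cite{dekimpe1995} lets us write $M = \Heis / \Gamma$ where $\Gamma$ contains $\Gamma_{2\ell}$ as an index-two subgroup and the nontrivial coset is represented by the affine map $\tau_{2\ell}$ described just before the statement. In particular, $\Gamma_{2\ell}$ is the maximal nilpotent subgroup of $\Gamma$ and is preserved by every automorphism of $\Gamma$.

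Next I would produce the affine model. Lift $f_M$ to $f : \Heis \to \Heis$; this induces an automorphism $f_\ast : \Gamma \to \Gamma$. By the Bieberbach-type rigidity for almost-crystallographic groups on nilpotent Lie groups (see \cite{lee-raymond}), there is an affine diffeomorphism $g : \Heis \to \Heis$, $g(x) = \Phi(x)\cdot c$ with $\Phi$ a Lie group automorphism, which descends to $g_M : M \to M$ and satisfies $g_\ast = f_\ast$. Because $M$ is not virtually abelian, the analysis of actions on $H_1(\cN_k,\RR)$ in \cite{Pot} and \cite[Proposition~3.3]{HP} combined with the matrix form recalled in the preceding proposition force the $2\times 2$ block of $\Phi$ on the abelianization to be hyperbolic; hence $\Phi$ is partially hyperbolic, dynamically coherent, and its center bundle integrates uniquely (this is essentially the content of \cite{HNil}).

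Now I would construct the leaf conjugacy on the universal cover. Using the nilmanifold analog of Proposition \ref{conjfromfund} (proved in \cite{HNil,HP}), since $f_\ast = g_\ast$ on $\pi_1(\cN_k)$, there exists a $\Gamma_{2\ell}$-equivariant leaf conjugacy $h : \Heis \to \Heis$ between $f$ and $g$, unique up to sliding along center leaves. To push this down to $M$ I need $h$ to be equivariant with respect to the full group $\Gamma$. Pick a representative $\alpha$ of the nontrivial coset. As in the proof of Proposition \ref{virtabel}, the relation $\alpha f \alpha^{-1} = f\beta$ for some $\beta \in \Gamma_{2\ell}$, together with the identity $g_\ast = f_\ast$, shows that $\alpha h \alpha^{-1}$ is also a leaf conjugacy between $f$ and $g$, so by the uniqueness clause the two maps are $c$-equivalent. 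Averaging $h$ and $\alpha h \alpha^{-1}$ by the method of \cite[\S 6]{Hammerlindl} yields a new leaf conjugacy $h_1$ which is $\Gamma$-equivariant and therefore descends to a leaf conjugacy $M \to M$ between $f_M$ and $g_M$.

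Finally I would remove the translation $c$ to obtain a pure Lie group automorphism. Conjugating $g$ by a translation $x \mapsto x \cdot d$ replaces $c$ by $\Phi(d)\cdot c \cdot d^{-1}$, so by choosing $d$ appropriately we may assume that $c$ lies in the center $Z(\Heis)$, i.e., in the center direction of $\Phi$. But then $c$ and the identity lie on the same center leaf of $\Phi$, so $g$ is itself leaf conjugate to $\Phi$ via the identity map of $\Heis$, and this leaf conjugacy descends to $\Heis/\Gamma = M$. Composing with $h_1$ gives the desired leaf conjugacy between $f_M$ and $\Phi_M$.

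The main obstacle I expect is the averaging step: ensuring that the finite-order $\Gamma$-action interacts well enough with the center-leaf structure to make the averaging procedure of \cite{Hammerlindl} actually produce a $\Gamma$-invariant leaf conjugacy, rather than merely a $\Gamma_{2\ell}$-invariant one, and checking carefully that the $c$-equivalence relation survives the passage from $\RR^3$ to $\Heis$ (where center leaves are cosets of a one-parameter subgroup rather than affine lines). The other technical point that needs attention is the final normalization in the center: one must verify, using the fact that the center of $\Phi$ restricts to the center of $\Heis$, that the translation part $c$ can indeed be absorbed into a sliding along $\Ac$ without disturbing $h_1$.
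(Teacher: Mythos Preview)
Your proposal is correct and follows essentially the same route as the paper: reduce to a nilmanifold cover via \cite{Pw} and the preceding classification, build the affine model with the Lee--Raymond version of Bieberbach, invoke the nilmanifold analogue of Proposition~\ref{conjfromfund} from \cite{HNil,HP}, average over the index-two coset as in Proposition~\ref{virtabel}, and finally absorb the translation into the center. The paper's own proof is much terser---it simply says the argument ``follows almost exactly as the proof of Proposition~\ref{virtabel}, but with $\bbR^3$ replaced by $\Heis$''---and points to \cite{lee-raymond} for the affine model and to \cite{HNil} for the eigenvalue pattern $|\lam_1|<|\lam_2|=1<|\lam_3|$; one point the paper does make explicit and you should too is that $f_M$ itself is dynamically coherent by \cite[Theorem~1.2]{HP}, which you need before you can invoke the analogue of Proposition~\ref{conjfromfund}.
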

\begin{proof}
    As shown in \cite{Pw}, $M$ is finitely covered by a circle
    bundle over the torus.  As the fundamental group is not virtually
    abelian, the circle bundle is non-trivial and
    such non-trivial circle bundles are exactly the
    nilmanifolds $\cN_k$.
    Then, $f_M$ is dynamically coherent by \cite[Theorem 1.2]{HP}.

    The proof in this case now follows almost exactly as the proof of
    Proposition \ref{virtabel}, but with $\bbR^3$ replaced by $\Heis$.  The
    relevant theorem of Bieberbach used to construct an affine map $g$ also
    holds in the case of nilmanifolds, as proven by Lee and Raymond
    \cite{lee-raymond}.
    Partial hyperbolicity of $g$ with $|\lam_1| < |\lam_2| = 1 < |\lam_3|$
    is proven in \cite{HNil}.  \end{proof}
%

%%%%%%%%%%%%%%%%%%%%%%%%%%%%%%%%%%%%%%%%%%%%%%%%%%%%%%%%%%%%%%%%%%%%%%%%%%%%%%%%%%%%%%%%%%%%%%%%%%%%%%%%%%%%%%%%%%%%%%%%%
\section{On center-stable tori}\label{Appendix-AnosovTori} %{{{1

This appendix treats partially hyperbolic diffeomorphisms admitting
center-stable or center-unstable tori. The first subsection proves Proposition
\ref{propcompactleaf}.
In the second subsection the case of
absolute partial hyperbolicity is discussed: it is shown that under this more
restrictive definition, the existence of such tori is not possible. This
allows to recover the main result of \cite{BBI2} without (explicit) use of
quasi-isometry and extend it to the case of suspension manifolds.

\subsection{Proof of Proposition \ref{propcompactleaf}} %{{{1

\begin{proof}[Proof of Proposition \ref{propcompactleaf}]
Assume that $\cF^{cs}$ has a leaf $L$ which projects down to a compact surface
$T$ in $M$. As $T$ is tangent to $E^{cs}$ it admits a foliation without circle
leaves and thus must be a torus.
Since $L$ is homeomorphic to a plane, the torus $T$ is incompressible. If
$L_1$ and $L_2$ are leaves of $\cF^{cs}$ which project to tori, item (5)
of Theorem \ref{thmbran} shows their
projections are disjoint modulo isotopy.
By classical results in 3-manifold topology, there are finitely many disjoint
incompressible tori modulo isotopy, so by replacing the diffeomorphism $f$ on
$M$ by an iterate, assume $f(T)$ is isotopic to $T$.

Take $x \in T$ and consider a sequence $n_k$ such that $x_{k}=f^{-n_k}(x)$
converges.
Lift $x_k$ to a convergent sequence $\tilde x_k$ in the cover $\tilde M$ and
lift each
torus $T_k = f^{-n_k}(T)$ to a leaf $L_k$ in $\Fcs$ through $\tilde x_k$.
As shown by \cite[Lemma 7.1]{BI}, these leaves $L_k$ converge in the
$C^1$ topology to a leaf $L_\infty$ in $\Fcs$.
Since there is a subgroup $\Gamma$ isomorphic to $\ZZ^2$ of deck
transformations which fix all the leaves $L_k$ this group $\Gamma$ also fixes
$L_\infty$.

Projecting down, the limit leaf
$T_\infty \subset M$ contains a copy of $\bbZ^2$ in its fundamental group and
is therefore compact (\cite{ClasificacionSuperficies}).
Arguing as above, $T_\infty$ is a torus.
Construct a normal neighborhood
$N$ of $T_\infty$ consisting of small unstable segments.
Then, there is an arbitrarily large iterate $f^{-\ell}$ which maps a torus
$T_{k_1}$ to $T_{k_2}$ where both tori are arbitrarily close to $T_\infty$.
Since these tori are transverse to the unstable bundle on $M$,
one can show that $f^{-\ell}(\overline N) \subset N$ and therefore there is a
normally repelling $f$-periodic torus tangent to $\Ecs$ inside $N$.
\end{proof}

\begin{cor}\label{Coro-TorusLeafImpliesfperiodicTorus} Assume $f: M \to M$ is
    a partially hyperbolic diffeomorphism such that the bundles $E^s, E^c,
    E^u$ are orientable then $M$ admits a codimension one foliation
    without compact leaves.
\end{cor}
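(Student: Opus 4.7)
The natural foliations to consider are $\cF_0^{cs}$ and $\cF_0^{cu}$, obtained by applying Proposition \ref{propbi72} to the branching foliations $\Fcs$ and $\Fcu$ given by Theorem \ref{thmbran} (passing to $f^2$ if necessary so that $Df$ preserves the orientations of the three bundles). The second clause of Proposition \ref{propbi72} says that $\cF_0^\sigma$ has a compact leaf if and only if some leaf of the corresponding branching foliation projects to a compact surface of $M$. Any such compact leaf is tangent to $E^\sigma$ and carries a nowhere-vanishing line field (from $\Es$ if $\sigma=cs$, from $\Eu$ if $\sigma=cu$), so it has vanishing Euler characteristic and must be a torus.

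If at least one of $\cF_0^{cs}$ or $\cF_0^{cu}$ has no compact leaf, that foliation is the one we want. Otherwise, Proposition \ref{propcompactleaf} applied to $f$ (for the $cs$-branching foliation) and to $f^{-1}$ (for the $cu$-branching foliation, noting that $\Fcu$ is the center-stable branching foliation of $f^{-1}$) produces simultaneously a normally repelling $f$-periodic torus $T^-$ tangent to $\Ecs$ and a normally attracting $f$-periodic torus $T^+$ tangent to $\Ecu$. In this remaining case, my plan is to destroy the compact leaves of $\cF_0^{cs}$ by a local topological turbulization: the compact leaves form a closed saturated sublamination with finitely many connected components (by Reeb stability plus compactness of $M$), each component being either a single torus or a product region $T^2\times I$. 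Using that $\Wu$ is transverse to $\Ecs$, I enclose each component in a tubular neighborhood whose transversals are short unstable arcs, and inside such a neighborhood I replace the local product structure by a pair of spirals accumulating on the central compact piece from the two sides with distinct irrational slopes. The modification glues to $\cF_0^{cs}$ along the boundary of each tubular neighborhood.

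The main obstacle is this last surgery: one must arrange the spiral slopes so the new foliation truly matches $\cF_0^{cs}$ on each boundary and so that no new compact leaf appears elsewhere. Incommensurability of the two slopes on opposite sides prevents the creation of new closed leaves within each neighborhood, while the normally hyperbolic normal form around $T^\pm$ supplies the product-like structure needed for the boundary matching. Once this essentially three-dimensional topological construction is verified, one obtains a global codimension-one foliation of $M$ with no compact leaves, establishing the corollary.
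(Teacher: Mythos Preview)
Your opening reduction is correct and matches the paper: by Propositions \ref{propbi72} and \ref{propcompactleaf}, if there is no $f$-periodic torus tangent to $\Ecs$ (respectively $\Ecu$), then the approximating foliation $\cF_0^{cs}$ (respectively $\cF_0^{cu}$) has no compact leaf and you are done. The divergence, and the gap, is in the remaining case.

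The paper does not attempt surgery. Once an $f$-periodic torus tangent to $\Ecs$ or $\Ecu$ exists, it simply cites the classification in \cite{HHU3}: any 3-manifold carrying such a torus is one of a short explicit list of $\bbT^2$-bundles over $S^1$, and each of those manifolds visibly supports a codimension-one foliation without compact leaves (an irrational linear foliation on $\bbT^3$, the weak foliation of the suspension Anosov flow on $M_A$, and so on). That single citation closes the argument.

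Your turbulization route, by contrast, does not close. The appeal to Reeb stability is misplaced---stability requires finite $\pi_1$ or trivial holonomy, neither of which you have for torus leaves---so the asserted structure of the compact-leaf set (finitely many components, each a single torus or a product $T^2\times I$) is unjustified. More fundamentally, the modification you describe does not remove compact leaves: spiraling the nearby leaves onto ``the central compact piece'' leaves that central torus in place as a compact leaf of the new foliation. To actually kill it you would need a foliation of the tubular neighborhood with \emph{no} compact leaf at all that still matches the trace of $\cF_0^{cs}$ on the boundary, and nothing in the sketch produces this; you flag the gluing as the main obstacle, but the difficulty is prior to gluing. Replace the surgery paragraph with the invocation of \cite{HHU3}.
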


\begin{proof}
    If $M$ admits an invariant torus tangent to $\Ecs$ or $\Ecu$, then
    it is one of
    the manifolds listed in \cite{HHU3}, all of which have admit foliations
    without compact leaves.  Otherwise, the result follows by Propositions
    \ref{propbi72} and \ref{propcompactleaf}.
\end{proof}

\subsection{Absolute partial hyperbolicity and center-stable tori} %{{{1

To end this appendix, we give a different proof of the main result of
\cite{BBI2}, which also applies to solvmanifolds.
A partially hyperbolic diffeomorphism $f: M \to M$ with
splitting $TM=E^s \oplus E^c \oplus E^u$ is \emph{absolutely partially
hyperbolic} if there exist constants $0<\gamma_1 < 1< \gamma_2$ and $N>0$ such
that for every $x\in M$:
\[ \|Df^N|_{E^s(x)}\| < \gamma_1 < \|Df^N|_{E^c(x)}\| < \gamma_2 <
\|Df^N|_{E^u(x)}\|. \]

\begin{teo}\label{Teo-Absolute}  Let $f: M \to M$ be a partially hyperbolic diffeomorphism admitting a two dimensional $f$-periodic torus $T$ tangent to $E^{cs}$. Then, $f$ is not absolutely partially hyperbolic.
\end{teo}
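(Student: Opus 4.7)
The plan is to assume for contradiction the existence of such a torus $T$ and derive the contradictory conclusion that, in the universal cover, an unstable curve of intrinsic length $\gamma_2^{n/N}$ (exponential in $n$) must fit inside an ambient ball of radius $O(n)$.

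After replacing $f$ by a finite iterate and passing to a finite cover, I would assume $f(T) = T$ with oriented, orientation-preserved bundles; by Proposition \ref{proplist3} combined with the \cite{HHU3} characterisation of 3-manifolds admitting such tori, $f$ may be taken homotopic to the identity. By the proof of Proposition \ref{propcompactleaf}, $T$ is normally repelling. Lifting to $\tilde M$, the torus $T$ lifts to a properly embedded plane $\tilde T$ preserved by a $\ZZ^2$-subgroup $G_0$ of deck transformations, and one may choose a lift $\tilde f$ at bounded distance from the identity with $\tilde f(\tilde T) = \tilde T$; in particular $\sup_x d(\tilde f^n(x), x) \leq Kn$ for some $K > 0$.

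Now take a unit unstable arc $J_0$ at $\tilde p_0 \in \tilde T$ pointing into the repelling side, and set $J_n = \tilde f^n(J_0)$. Absolute partial hyperbolicity gives $\length(J_n) \geq \gamma_2^{n/N}$ while the bounded-distance property gives $J_n \subset B_{Kn+1}(\tilde p_0)$, so the ambient diameter of $J_n$ is at most $2(Kn+1)$. The crux is to show that, under absolute partial hyperbolicity, the unstable foliation $W^u$ is quasi-isometrically embedded in $\tilde M$, so that $\length(J_n) \leq c_1 \diam(J_n) + c_2$. Granting this, one obtains $\gamma_2^{n/N} \leq 2c_1(Kn+1) + c_2$, which fails for $n$ large. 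The quasi-isometric embedding is to be established directly using the tube volume estimates of Lemma \ref{ucs1}, the polynomial growth of slabs of Lemma \ref{polygrow}, and the uniform gap $\|Df^N|_{E^u}\| > \gamma_2 > \|Df^N|_{E^c}\|$ coming from absolute partial hyperbolicity: a folded unstable arc would produce a $cs$-tube of exponentially large volume confined to a polynomially growing slab, a contradiction.

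The main obstacle is establishing the quasi-isometric embedding of $W^u$ without directly invoking the \cite{BBI2} quasi-isometry theorem, particularly beyond the $\TT^3$ setting. The approach is to combine the slab projection $p_1$ and polynomial-growth estimates from Section \ref{Section-SolvAndModel} with the branching foliation technology of Section \ref{SectionBranchingFoliations} and Lemma \ref{ucs1}-type tube estimates; the absolute bound makes these tubes strictly fatter than what can fit in a polynomially growing slab, thereby ruling out the folding of unstable arcs. For the manifolds on the \cite{HHU3} list other than suspension manifolds, analogous slab structures with polynomial growth must be constructed, which is the secondary technical hurdle.
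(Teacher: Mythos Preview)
Your approach has a genuine gap at its crux. You want to establish that $W^u$ is quasi-isometrically embedded in $\tilde M$ from absolute partial hyperbolicity, and then derive a contradiction from an exponentially long unstable arc sitting in a ball of linear radius. But on a suspension manifold this implication is simply false: the time-one map of the suspension Anosov flow is absolutely partially hyperbolic (indeed $\|Df|_{E^c}\|\equiv 1$), yet an $\Au$-segment at height $t$ of intrinsic length $\lambda^t$ has endpoints at ambient distance $O(t)$ (travel down to height $0$, across, and back up), so $W^u$ is not quasi-isometric. Your tube-volume/slab mechanism cannot repair this: the arcs $J_n=\tilde f^n(J_0)$ do not remain in any bounded $p_1$-slab, and balls in $\tM$ have exponential volume growth, so Lemma~\ref{ucs1} yields no contradiction. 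A further red flag is that after the initial setup your contradiction never again uses the torus $T$; it would equally ``rule out'' the time-one map of the suspension flow, which has no such torus. The \cite{BBI2} volume argument works on $\TT^3$ precisely because of polynomial volume growth and does not transplant to sol geometry---which is exactly why the paper seeks an alternative route here.

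The paper's proof is entirely different and works intrinsically on $T$, with no lifting and no quasi-isometry. Since $f|_T$ is semiconjugate to a linear Anosov automorphism $A$ of $\TT^2$ (by \cite{HHU3}), one has $h_{top}(f|_T)\ge h_{top}(A)$. The tangent of $T$ is $E^s\oplus E^c$ with $E^s$ contracting, so the variational principle together with Ruelle's inequality produces ergodic measures on $T$ whose center Lyapunov exponent is at least $h_{top}(A)-\eps$. On the other hand, an adaptation of \cite[Lemma~4.5]{HNil} shows that the center expansion rate bounded by $\gamma_2$ is strictly below $h_{top}(A)$. That is the contradiction.
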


\begin{proof} Assume $T^{cs}$ is an $f$-invariant torus tangent to $E^{cs}$.
Then, the dynamics in $T^{cs}$ must be semiconjugated to a certain linear
Anosov diffeomorphism $A$ of $\TT^2$ (see \cite{HHU3}).
The entropy of $f|_{T^{cs}}$ is at least as big as the entropy of $A$. Using
the variational principle and Ruelle's inequality (see \cite{ManheLibro}), for
every $\eps>0$ there is an ergodic measure $\mu^{\eps}$ such that the center
Lyapunov exponent of $\mu^{\eps}$ is at least $h_{top}(A)-\eps$.

On the other hand, adapting the proof of Lemma 4.5 in \cite{HNil} shows that
the asymptotic rate of expansion, $\gam_2$, along the center is
strictly smaller than than $h_{top}(A)$, which is equal to the largest
Lyapunov exponent of $A$.
This gives a contradiction.
\end{proof}

As a consequence we obtain that every absolutely partially
hyperbolic diffeomorphism of a 3-manifold with (virtually)
solvable fundamental group is dynamically coherent.

%%%%%%%%%%%%%%%%%%%%%%%%%%%%%%%%%%%%%%%%%%%%%%%%%%%%%%%


\begin{thebibliography}{2}

%\bibitem[B]{Berger} P. Berger, Persistence of laminations, \emph{Bull. Braz. Math. Soc.} {\bf 41} 2 (2010) 259-319.

\bibitem[BR]{baake1997}
M.~Baake and J.~A.~G. Roberts,
Reversing symmetry groups of $Gl(2,\mathbb{Z})$ and
$PGl(2,\mathbb{Z})$ matrices with connections to cat maps and trace maps.
{\em J.~Phys. A: Math. Gen.} {\bf 30} (1997) 1549-1573.


\bibitem[BDV]{BDV} C. Bonatti, L. Diaz and M. Viana, \emph{Dynamics Beyond Uniform Hyperbolicity. A global geometric and probabilistic perspective}, Encyclopaedia of Mathematical Sciences {\bf 102}. Mathematical Physics III. Springer-Verlag (2005).

\bibitem[BoW]{BW} C. Bonatti and A. Wilkinson, Transitive partially hyperbolic diffeomorphisms on $3$-manifolds, \emph{Topology} {\bf 44} (2005) 475-508.

\bibitem[BBI]{BBI} M. Brin, D. Burago and S. Ivanov, On partially hyperbolic diffeomorphisms of 3-manifolds with commutative fundamental group. \emph{Modern dynamical systems and applications} Cambridge Univ. Press, Cambridge, (2004) 307--312.

\bibitem[BBI$_2$]{BBI2} M. Brin, D. Burago and S. Ivanov, Dynamical coherence of partially hyperbolic diffeomorphisms of the 3-torus. \emph{Journal of Modern Dynamics} {\bf 3} (2009) 1-11.

\bibitem[Bru]{Brunella} M. Brunella, Expansive flows on Seifert Manifolds and on Torus bundles, \emph{Bull. Braz. Math. Soc} {\bf 24}1 (1993) 89-104.

\bibitem[BI]{BI} D. Burago and S. Ivanov, Partially hyperbolic diffeomorphisms of 3-manifolds with abelian fundamental groups. \emph{Journal of Modern Dynamics} {\bf 2} (2008) 541--580.


\bibitem[BuW]{BuW2} K.Burns and A. Wilkinson, Dynamical coherence and center bunching, \emph{Discrete and Continuous Dynamical Systems A} (Pesin birthday issue) {\bf 22} (2008) 89-100.


%\bibitem[Cal]{Calegari} D. Calegari, \emph{Foliations and the
%geometry of $3$-manifolds}, Oxford University Press (2008).

\bibitem[CC]{CandelConlon} A. Candel and L. Conlon, \emph{Foliations I and II}, Graduate studies in Mathematics {\bf 60} American Math. Society (2003).

\bibitem[CS]{choi2005} D. Choi and J. Shin, Free actions on finite abelian groups on 3-dimensional nilmanifolds, \emph{J. Korean Math. Soc.} {\bf 42} (2005) 795--826.

\bibitem[DIKL]{dekimpe1995} K. Dekimpe, P. Igodt, S. Kim and K.B. Lee, Affine structures for closed 3-dimensional manifolds with nil-geometry, \emph{Quart. J. Math. Oxford Ser.} {\bf 46} (1995) 141--167.

\bibitem[Fu]{fuller1965} F. Brock Fuller, On the surface of section and periodic trajectories, \emph{Amer. J. Math.}, {\bf 87} (1965) 473--480.

%\bibitem[Ga]{Gabai} D. Gabai, Combinatorial volume preserving flows and taut foliations, \emph{Comment. Math. Helv.} {\bf 75} (2000), no. 1, 109--124.

\bibitem[GS]{ghys1980stabilite} \'E. Ghys and V. Sergiescu, Stabilite et conjugaison differentiable pour certains feuilletages, \emph{Topology} {\bf 19} (1980) 179--197.

\bibitem[HJKL]{ha2002classification} K.Y. Ha, J.H. Jo, S. W. Kim and J.B. Lee, Classification of free actions of finite groups on the 3-torus, \emph{Topology and its applications} {\bf 121} (2002) 469--507.

\bibitem[H$_1$]{Hammerlindl} A. Hammerlindl, Leaf conjugacies in the torus, \emph{Ergodic theory and dynamical systems} {\bf 33} Issue 03 (2013), pp 896 -- 933.

\bibitem[H$_2$]{HNil} A. Hammerlindl, Partial hyperbolicity on $3$-dimensional nilmanifolds, \emph{Discrete and Continuous Dynamical Systems A}, {\bf 33}:8 (2013) 3641--3669.

%\bibitem[H$_3$]{HamExpanding} A. Hammerlindl, Expanding foliations, Preprint (2013)

\bibitem[HP]{HP} A. Hammerlindl and R. Potrie, Pointwise partial
hyperbolicity in 3-dimensional nilmanifolds, \emph{Journal of the
London Math. Soc.} {\bf 89} (3) (2014) : 853-875.

\bibitem[HPS]{HPS} M. Hirsch, C. Pugh and M. Shub, Invariant Manifolds,  \emph{Springer Lecture Notes in Math.}, {\bf 583} (1977).

%\bibitem[Hi]{Hirsch} M. Hirsch, \emph{Differential topology},
%Springer, Graduate texts in Mathematics {\bf 33} (1976).

\bibitem[LR]{lee-raymond} K.B. Lee and F. Raymond, Rigidity of almost crystallographic groups, In \emph{Combinatorial methods in topology and algebraic geometry (Rochester, N.Y. 1982)}, volume 44 of \emph{Contemp. Math.} pages 73--78. Amer. Math. Soc. Providence, RI, 1985.

\bibitem[LSY]{lee1993} K.B. Lee, J. Shin and S. Yokura, Free actions on finite abelian groups on the 3-torus, \emph{Topology and its applications} {\bf 53} (1993) 153--175.

\bibitem[L]{Lew} J. Lewowicz, Expansive homeomorphisms on
surfaces, \emph{Bol. Braz. Mat. Soc.} {\bf 20} (1989) , 113--133.

\bibitem[LS]{LyndonSchupp} R.C. Lyndon, P.E. Schupp, \emph{Combinatorial group theory}, Ergebnisse der Mathematik und ihrer Grenzgebiete, Band 89. Springer-Verlag, Berlin-New York, (1977). xiv+339 pp.

\bibitem[Man]{Manning} A. Manning, There are no new Anosov diffeomorphisms on tori, \emph{Amer. J. Math} {\bf 96} (1974) 422--442.

\bibitem[M]{ManheLibro} R. Ma{\~n}e, Teoria Erg{\'o}dica,\emph{Projeto  Euclides, CNPq IMPA} (1983).

\bibitem[Mo]{moise} E.E. Moise, Affine structures in 3-manifolds v. the triangulation theorem and hauptvermutung, \emph{Annals of Math.} {\bf 56} (1952) 96-114.

\bibitem[Par]{Pw} K. Parwani, On $3$-manifolds that support partially hyperbolic diffeomorphisms, \emph{Nonlinearity} {\bf 23} (2010) 589-606.

\bibitem[Pl]{Plante} J.F. Plante, Foliations of $3$-manifolds with solvable fundamental group, \emph{Inventiones Math.} {\bf 51} (1979) 219-230.

\bibitem[Pot]{Pot} R. Potrie, Partial hyperbolicity and foliations in $\TT^3$, \emph{Preprint} (2012)
arXiv:1206.2860.To appear in \emph{Journal of Modern Dynamics}.

%\bibitem[Pot$_2$]{Pot2} R. Potrie, Partial hyperbolicity and attracting regions in 3-dimensional
%manifolds, Thesis (2012)  arXiv:1207.1822.

\bibitem[Ri]{ClasificacionSuperficies} I. Richards, On the classification of non-compact surfaces, \emph{Transactions of the A.M.S} {\bf 106} 2 (1963) 259-269.

\bibitem[RHRHU$_1$]{HHU3} M.A. Rodriguez Hertz, F. Rodriguez Hertz and R. Ures, Tori with hyperbolic dynamics in 3-manifolds, \emph{Journal of Modern Dynamics}, {\bf 5}1, (2011) 185--202.

\bibitem[RHRHU$_2$]{HHU} M. A. Rodriguez Hertz, F. Rodriguez Hertz and R. Ures, A non dynamically coherent example in $\TT^3$,
\emph{Preprint}, arXiv:1409.0738.

%\bibitem[Rou]{Roussarie} R. Roussarie, Plongements dans les vari\'et\`es feuillet\'ees et classification des feuilletages sans holonomie, \emph{Publ. Math. IHES} {\bf 43}(1974) 101--141.

%\bibitem[Sc]{Schwartzman} S. Schwartzman, Asymptotic Cycles. \emph{Annals of Math}. {\bf 66} (1957) 270--284.

%\bibitem[So]{Solodov} V.V. Solodov, Components of topological foliations, \emph{Math USSR Sbornik} {\bf 47} (1984) 329-343.

%\bibitem[V]{Verjovsky} A. Verjovsky, Codimension one Anosov flows. \emph{Bol. Soc. Mat. Mexicana}
%{\bf 19} 2 (1974), 49-–77.

%\bibitem[W]{Waldhausen} F. Waldhausen. On irreducible 3-manifolds which are suficiently large. \emph{Annals of Math.} {\bf 87} (1968)
%56-–88.

\bibitem[Wi]{WilkinsonSurvey} A. Wilkinson, Conservative partially hyperbolic
dynamics, \emph{2010 ICM Proceedings} (2010).

\end{thebibliography}
\end{document}